\newtheorem{assumption}{Assumption}[section]
\newtheorem{remark}[theorem]{Remark}
\crefname{assumption}{Assumption}{Assumptions}
\crefname{proposition}{Proposition}{Propositions}
\crefname{section}{Section}{Sections}
\crefname{subsection}{Subsection}{Subsections}
\numberwithin{theorem}{section}
\numberwithin{equation}{section}
\newcommand{\TheTitle}{A priori Error Estimates for\\ Space-Time Finite Element Discretization of\\ Parabolic Time-Optimal Control Problems} 
\newcommand{\TheAuthors}{Lucas Bonifacius, Konstantin Pieper, and Boris Vexler}
\title{{\TheTitle}\thanks{\textbf{Funding: }The first author gratefully acknowledge support from the International Research
			Training Group IGDK, funded by the German Science Foundation (DFG) and the Austrian
			Science Fund (FWF).}}
\author{
	Lucas Bonifacius\thanks{Fakult\"at f\"ur Mathematik, Technische Universit\"at M\"unchen	(\email{lucas.bonifacius@tum.de}; \email{vexler@ma.tum.de}).}
	\and
	Konstantin Pieper\thanks{Department of Scientific Computing, Florida State University (\email{kpieper@fsu.edu}).}
	\and
	Boris Vexler\footnotemark[2]
}
\crefname{assumption}{Assumption}{Assumptions}
\crefname{proposition}{Proposition}{Propositions}
\crefname{corollary}{Corollary}{Corollaries}
\newcommand{\constraintSet}{\colon}
\newcommand{\N}{\mathbb{N}}
\newcommand{\R}{\mathbb{R}}
\newcommand{\Rplus}{\R_+}
\newcommand{\Lap}{\upDelta}
\newcommand{\Q}{\ensuremath{Q}}
\newcommand{\Qad}{\ensuremath{Q}_{ad}}
\newcommand{\Lcontrol}[2][I]{\ensuremath{L^{#2}(#1\times\omega)}}
\newcommand{\LcontrolSpatial}[1][2]{\ensuremath{L^{#1}(\omega)}}
\newcommand{\ControlOp}{\ensuremath{B}}
\newcommand{\dom}[2]{\mathcal{D}_\ensuremath{{#1}(#2)}}
\newcommand{\ProjQad}[1]{\ensuremath{P_{\Qad}\left(#1\right)}}
\newcommand{\ProjectionQad}{P_{\Qad}}
\newcommand{\CriticalCone}{C_{(\bar{\nu},\bar{q})}}
\newcommand{\embedding}{\hookrightarrow}
\newcommand{\MPRHilbert}[3]{\ensuremath{H^{1}(#1;#2)\cap L^{2}(#1;#3)}}
\newcommand{\ldef}{\coloneqq}
\newcommand{\rdef}{\eqqcolon}
\newcommand{\e}{\mathrm{e}}
\newcommand{\D}[1]{\,\mathrm{d}\ensuremath{#1}}
\newcommand{\ProjDiscControl}{\mathrm{I}_\sigma}
\newcommand{\ProjK}{\mathrm{\Pi}_{k}}
\newcommand{\ProjH}{\mathrm{\Pi}_{h}}
\newcommand{\ProjKH}{\mathrm{\Pi}_{kh}}
\newcommand{\InterpolantCellwiseLinear}{\mathrm{I}_{h}}
\newcommand{\B}{\mathrm{B}}
\newcommand{\Xkh}{\ensuremath{X_{k,h}}}
\newcommand{\Qsigma}{\ensuremath{Q}_\sigma}
\newcommand{\Qsigmaad}{\ensuremath{Q}_{ad,\sigma}}
\DeclarePairedDelimiter{\abs}{\lvert}{\rvert}
\DeclarePairedDelimiter{\norm}{\lVert}{\rVert}
\DeclarePairedDelimiter{\pair}{\langle}{\rangle}
\DeclarePairedDelimiter{\inner}{(}{)}
\DeclarePairedDelimiter{\prodnorm}{\lVert(}{)\rVert}
\begin{document}

\maketitle

\begin{abstract}
Space-time finite element discretizations of
time-optimal control problems governed by linear parabolic PDEs and subject to pointwise control
constraints are considered. Optimal a priori error estimates are obtained for 
the control variable based on a
second order sufficient optimality condition.
\end{abstract}

\begin{keywords}
  Time-optimal control, Error estimates, Galerkin method
\end{keywords}

\begin{AMS}
49K20, 
49M25, 
65M15, 
65M60 
\end{AMS}

\section{Introduction}
We consider the space-time finite element discretization of the following class of
time-optimal control problems, where $u$ denotes the
state, $q$ the control, and $T$ the terminal time:
\begin{equation}\label{P}\tag{\ensuremath{P}}
\begin{aligned}
\mbox{Minimize~} j(T, q) &\ldef T + \frac{\alpha}{2}\int_0^T \norm{q(t)}^2_{\LcontrolSpatial}\D{t} \mbox{,}\\
\text{subject to}\quad &
\left\{
\begin{aligned}
T &> 0\mbox{,}\\
\partial_t u -\Lap u &= Bq\mbox{,} &\mbox{in~} (0,T)\times\Omega\mbox{,}\\
u &= 0\mbox{,} & \mbox{on~} (0,T)\times \partial\Omega\mbox{,}\\
u(0) &= u_0\mbox{,} & \mbox{in~} \Omega\mbox{,}\\
G(u(T)) &\leq 0\mbox{,}\\
q_a &\leq q(t) \leq q_b\mbox{,} &\mbox{in~} \omega\mbox{,~} t\in (0,T)\mbox{.}
\end{aligned}
\right.
\end{aligned}
\end{equation}
Here, $B$ is the control operator, $q_a, q_b \in \R$ are the control constraints, and the
terminal constraint on the state is expressed in terms of the function $G$, which is
defined as
\begin{equation}\label{eq:terminal_constraint}
G(u) \ldef \frac{1}{2}\norm{u-u_d}_{L^2(\Omega)}^2 - \frac{\delta_0^2}{2},
\end{equation}
where the desired state $u_d \in H_0^1(\Omega)$ and $\delta_0 > 0$ are given problem data.
Moreover, $\alpha > 0$ is a fixed cost parameter.
The precise assumptions will be given in \cref{sec:notation_assumptions}.
Thus, the goal is to steer the heat-equation from an initial state $u_0$ into a ball of radius
\(\delta_0\) around $u_d$, while minimizing the length of the
control horizon plus a quadratic cost term for the control.

Time-optimal control of partial differential equations is of general interest: in many
applications, a certain optimization criterion has to be met after some time, which should
be chosen as short as possible.
This does not only include the classical case where one is plainly interested to find an
admissible control that reaches the target set in minimal time, but also problems where additional
cost or regularization terms are accounted for in the objective functional;
cf., e.g., \cite{Kunisch2015,Kunisch2015a,Nicaise2014}.
The fact that the problem is posed on a variable time-horizon introduces a
nonlinear dependency on the additional control variable \(T\). This significantly
complicates the analysis and numerical realization of~\eqref{P} compared to linear-quadratic 
problems with a fixed \(T\); see, e.g, \cite{Meidner2008a,Meidner2008,Meidner2011a}.
The goal of this article is to describe an appropriate fully space-time discrete
formulation, which is based on a transformation to a reference interval, and to prove 
optimal order \emph{a~priori} discretization error estimates.

Although time-optimal control is considered to be a classical subject in
control theory, to the best of our knowledge there are only a few publications concerning
the numerical analysis of such problems in the context of parabolic equations. The existing
contributions have in common that the terminal set is given by an $L^2$-ball around a
desired state (often assumed to be zero), the objective functional is $j(T,q) = T$, and
the state is discretized only in space by means of continuous linear finite elements.
In \cite{Schittkowski1979} convergence of optimal times for a one dimensional heat equation
is proved based on a bang-bang principle. Thereafter, more
general spatial domains have been considered. Purely time-dependent controls
acting on the boundary have been considered in \cite{Knowles1982}. There,
an error estimate for the optimal times of order
$\mathcal{O}(h^{3/2-\varepsilon})$ is proved for all $\varepsilon > 0$, assuming that $u_0 \in H^{3/2}(\Omega)$.
Furthermore, convergence of optimal times and controls for $u_d \neq 0$ with $u_0, u_d \in
H^{1/2-\varepsilon}(\Omega)$ is shown in \cite{Lasiecka1984} for a setting with
boundary control.
More recently, for distributed control and $u_0 \in H^1_0(\Omega)$ the error
estimate $\mathcal{O}(h)$ has been proved in \cite{Wang2012} for the linear heat
equation and for a semilinear heat equation in \cite{Zheng2014}. Both articles use
a cellwise linear discretization for the control and the set of admissible controls
is defined by
$\Qad \ldef \set{q \in L^\infty((0,\infty); L^2(\omega))
  \constraintSet  \norm{q(t)}_{L^2} \leq 1\; \mbox{a.e.}\; t}$.
Employing a variational control discretization, the error estimates $\mathcal{O}(h)$ for $T$ and
$\mathcal{O}(h^{1-\varepsilon})$ for the control and the state have been shown in \cite{Gong2016}.
Convergence of optimal times and controls for a class of abstract evolution equations has
been recently shown in \cite{Tucsnak2016}. We point out that the authors impose less
regularity on the initial value as in the references before, which in our setting would
correspond to the assumption $u_0 \in L^2(\Omega)$.
        
To the best of our knowledge this paper provides the first systematic numerical analysis for 
the full discretization of a time-optimal control problem.
This is one of the main novelties compared to the contributions mentioned above,
where only semidiscretizations (in space) have been considered.
Our approach is based upon a transformation to a reference interval.
The state equation is discretized by means of the
discontinuous Galerkin scheme in time (corresponding to a version of the implicit Euler
method) and linear finite elements in space. We
prove optimal convergence rates for the control variable for different control
discretization strategies. For example, in case of the variational control discretization
we obtain the convergence rate $\mathcal{O}(k + h^2)$ in all variables up to a logarithmic term. Here,
$k$ and $h$ denote the temporal and spatial mesh size, respectively.
We note that the presence of the cost term with $\alpha > 0$ is
crucial for our analysis and changes the character of the optimal solutions compared to the
classical case with \(\alpha = 0\). While in the latter case
we expect bang-bang controls containing jump-discontinuities, in the former case the
optimal controls are more regular. Nevertheless, this case is also interesting,
since it arises in the presence of control costs or if bang-bang controls are not
desirable. Moreover, it can be interpreted as a regularization strategy for the purely
time-optimal problem; cf.\ also \cite{Ito2010,Kunisch2013}. In this context, the
behavior of the discrete solutions as $\alpha\to 0$ has to be investigated. However, this
is beyond the scope of this article.

The convergence result is proved in two steps. First, we obtain a suboptimal convergence
rate for the control variable, where we rely on a quadratic growth condition that follows
from a second order sufficient optimality condition (SSC). Conceptually the
discretization error is related to differences of the objective functional for the
continuous and the discrete solutions, where we have to take square roots in the end; see
\cref{prop:errorEstimatesControlLocal}.
In the context of pointwise state constraints this is often acceptable, as low regularity of
the problem prevents better order convergence; cf., e.g., \cite{Neitzel2015}. However,
the solutions of~\eqref{P} exhibit better regularity, so we can expect an improved rate of
convergence. For the proof we adapt ideas from \cite{Casas2012d} for unconstrained
problems to the constrained case. This second estimate uses the SSC directly and relates
the discretization error to differences of derivatives of the
Lagrange function, which avoids taking square roots at the end; see \cref{lemma:ImprovedErrorRate}.
This directly results in the previously described $\mathcal{O}(\abs{\log k}(k + h^2))$ convergence result
using the variational discretization concept.
Note that this immediately implies the same result for the practically relevant
case of control by a finite number of time-dependent parameters; see
\cref{cor:optimalconvergence_parameter}.
For distributed controls we also consider an additionally discretization in space by
element-wise constant discontinuous or linear continuous finite elements. Here,
we obtain optimal rates of convergence that are additionally restricted by the
discrete space and the limited smoothness of the control variables, which may have discontinuous
derivatives.

As evident from the discussion above, the convergence result relies on the
SSC. In general, it is difficult to verify that for a given problem a SSC is
satisfied. In this regard, we note that SSCs have been used in
related contexts by many authors; see, e.g., \cite{Casas2012d,Neitzel2015}.
For the problem under consideration in this paper, we show that the SSC is equivalent to a scalar
condition that can be evaluated for a given optimal solution by solving an additional
linear quadratic optimization problem; see \cref{sec:scalar_ssc}.
Moreover, this scalar condition can be related to the curvature of a value
function, which arises from~\eqref{P} by resolving the corresponding linear quadratic
optimization problem for each fixed \(T\); cf.~\cite{Kunisch2013}.
This connection highlights the intrinsic importance of the SSC for the class
of optimization problems under consideration. Additionally, a similar
computation on the discrete level allows to compute this curvature constant for the
discrete problems with a small numerical effort. We consider this
to be an indicator for the SSC on the continuous level. In the numerical examples 
we observe that the curvature constant is bounded from below for
different cost parameters $\alpha$ and for sequences of refined discretizations uniformly
with respect to the mesh parameters \(k\) and \(h\).

This paper is organized as follows. In \cref{sec:notation_assumptions} we introduce the
notation and state the main assumption. First order and second order optimality conditions
for~\eqref{P}, which form the basis of the error analysis, are discussed in \cref{sec:optimalcontrolproblem}.
\Cref{sec:discretization} introduces the space-time discretization of~\eqref{P} and the
main convergence results are derived. Last, in \cref{sec:numerical_examples}, numerical
examples are given, which illustrate the convergence rates in the context of concrete
examples.

\section{Notation and main assumptions}\label{sec:notation_assumptions}  
For $\Omega \subset \R^d$ a Lipschitz domain, $H^1_0(\Omega)$ is the usual Sobolev space
with zero trace and the corresponding dual space is denoted by $H^{-1}(\Omega)$. The
duality pairing between $H^1_0(\Omega)$ and $H^{-1}(\Omega)$ is denoted
$\pair{\cdot,\cdot}$. If ambiguity is not to be expected, we drop the spatial domain
$\Omega$ from the notation of the spaces.
For $Z$ a Hilbert space, $\inner{\cdot,\cdot}_Z$ stands for its inner product. If $A$ is a linear operator on a Banach space $X$, we use $\dom{X}{A}$ to denote the domain of $A$ on $X$ equipped with the graph norm as usual.
Last, $c$ is a generic constant that may have different values at different appearances.

Throughout this paper we impose the following assumptions.
\begin{assumption}\label{assumption:Omega}
  Let $\Omega \subset \R^d$, $d \in \{\,2,3\,\}$, be a polygonal/polyhedral and
  convex domain, and $\alpha > 0$ a given cost
  parameter. 
  The initial value satisfies $u_0 \in H^1_0(\Omega)$.
\end{assumption}
Concerning the control operator $\ControlOp$ we consider one of the following
situations:
\begin{enumerate}[(i)]
\item Distributed control: Let $\omega \subseteq \Omega$ be the control domain that is polygonal or polyhedral as well. The control operator $\ControlOp \colon L^2(\omega) \to L^2(\Omega)$ is the extension by zero operator. Clearly, its adjoint $\ControlOp^* \colon L^2(\Omega) \to L^2(\omega)$ is the restriction to $\omega$ operator. 
\item Purely time-dependent control: For $N_c \in \N$, let $\omega = \set{1,2,\ldots,N_c}$
  equipped with the counting measure. The control operator is defined by $Bq =
  \sum_{n=1}^{N_c}q_n e_n$, where $e_n \in L^2(\Omega)$ are given form functions. Then
  we have $L^2(\omega) \cong \R^{N_c}$ and $B^* \colon L^2(\Omega) \to \R^{N_c}$ with $(B^*\varphi)_n =
  \inner{e_n,\varphi}_{L^2(\Omega)}$ for $n = 1,2\ldots,N_c$.
\end{enumerate}
The space of admissible controls is defined as
\begin{equation*}
  \Qad \ldef \left\{q \in \LcontrolSpatial \constraintSet  q_a \leq q \leq q_b \;\mbox{~a.e.\ in~}\; \omega\right\} \subset \LcontrolSpatial[\infty]
\end{equation*}
for $q_a, q_b \in \R$ with $q_a < q_b$.
In addition, for $T > 0$ set $\Q(0,T) \ldef L^2((0,T)\times\omega)$ and
\begin{equation*}
\Qad(0,T) \ldef \left\{q \in \Q(0,T) \constraintSet  q(t) \in \Qad\mbox{~a.e.~} t\in (0,T)\right\} \subset \Lcontrol[(0,T)]{\infty}.
\end{equation*}	
Moreover, we use $W(0,T)$ to abbreviate
$\MPRHilbert{(0,T)}{H^{-1}}{H^1_0}$, endowed with the canonical norm and inner product. The
symbol $i_T \colon W(0,T) \rightarrow H$ denotes the continuous trace mapping $i_T u = u(T)$.
We also define the canonical extension of the control operator $\ControlOp \colon
\Q(0,T) \to L^2((0,T)\times\Omega)$ by setting
$(\ControlOp q)(t) = \ControlOp q(t)$ for any $q \in \Q(0,T)$.

\begin{assumption}\label{assumption:terminal_constraint}
	The terminal constraint $G$ is defined by~\eqref{eq:terminal_constraint}
	for a fixed desired state $u_d \in H_0^1(\Omega)$ and $\delta_0> 0$.
\end{assumption}
\begin{remark}\label{remark:regularity_G}
	\begin{enumerate}[(i)]
		\item The error analysis remains valid for more general terminal constraints.
		Precisely, we require that $G$ is two times continuously
                Fr{\'e}chet-differentiable, the mapping $\eta \mapsto G''(u)[\eta]^2$ is
                weakly lower semicontinuous, $G''$ is bounded on bounded sets in
                $L^2(\Omega)$, and $G'(u)^* \in H^1_0$ for any $u\in H^1_0$.
		We restrict attention to~\eqref{eq:terminal_constraint} in order to make
                the main ideas more transparent to the reader.
		Another terminal constraint that would fit into this more general setting can be found in \cite[Section~5.4]{Bonifacius2017}.
		\item The regularity assumption $u_d \in H_0^1(\Omega)$ is required for optimal order of convergence.
		Since $G'(u)^* = u-u_d$ defines the terminal value of the adjoint equation, this leads to improved regularity of the adjoint equation, which in turn allows to prove full order of convergence.
		\item In addition, we would like to justify the regularity assumption $u_d \in
		H_0^1(\Omega)$ from a different perspective,
                namely that of weak invariance. The target set $U = \set{u\in L^2(\Omega) \constraintSet  G(u) \leq 0}$ is called
		\emph{weakly invariant} under the state equation if, for any $u_0$ satisfying $G(u_0) \leq 0$, there
		is a admissible control \(q(t)\in\Qad\) such that the corresponding trajectory with initial value
		$u_0$ satisfies $G(u(t)) \leq 0$ for all times; cf.\
		\cite[Section~4]{Bonifacius2017} and the references therein.
		Since the formulation of~\eqref{P} only requires the
		state to be inside the target set at the final time \(T\) (but not at
                later times), it seems to be
                desirable to require the target set to be weakly invariant, since this
                guarantees that \(G(u(t)) \leq 0\) can be maintained for \(t > T\).
                However, this requirement already implies that the metric projection
                \(P_U\) to \(U\) in \(L^2(\Omega)\) is stable in \(H^1_0(\Omega)\);
                see~\cite[Lemma~3.5]{Bonifacius2017}. This further leads to
                the requirement $G'(P_U(u))^* = P_U(u)-u_d \in
                H_0^1(\Omega)$ for all $u \in H_0^1(\Omega)$, which implies the 
                assumption on \(u_d\).
	\end{enumerate}	
\end{remark}

In order to ensure existence of feasible points, we require the following:
\begin{assumption}\label{assumption:existence_feasible_control}
	There exist a finite time $T > 0$ and a feasible control $q \in \Qad(0,T)$ such that the solution to the state equation of \eqref{P} satisfies $G(u(T)) \leq 0$. To exclude the trivial case, we additionally assume $G(u_0) > 0$.
\end{assumption}

\begin{remark}
  We exemplary state situations where \cref{assumption:existence_feasible_control} holds.
  \begin{enumerate}[(i)]
  \item
    In case of distributed control on an open subset $\omega \subset \Omega$, the
    state equation is known to be approximately controllable (see, e.g.,
    \cite{Rosier2007,Zuazua2007}).
    This guarantees existence of feasible controls for sufficiently large
    control constraints relative to \(u_d\); cf.\ also~\cite{Fernandez-Cara2000} for
    estimates on the necessary size of the control bounds.
  \item
    For $u_d = 0$ and if $0 \in \Qad(0,1)$, then for any
    $\delta_0 > 0$ the control \(q\equiv 0\) is feasible for $T > 0$ sufficiently large,
    since the semigroup generated by $\upDelta$ is exponentially stable in
    $L^2(\Omega)$.
  \item
    To generalize the previous statement, assume there
    is a control $\breve{q} \in \Qad$ with
    \begin{equation}\label{eq:qualified_opt_sufficient_Laplace}
      \norm{B\breve{q}+\Lap u_d}_{H^{-1}} < \frac{c_P^2}{1+c_P^2}\,\delta_0,
    \end{equation}
    where $c_P$ denotes the Poincar{\'e} constant. Then, \(q \equiv \breve{q}\) is a feasible control
    for large enough \(T\); see~\cite[Lemma~3.9, Proposition~5.3]{Bonifacius2017}.
  \end{enumerate}
\end{remark}

\section{Optimal control problem}\label{sec:optimalcontrolproblem}
Since the problem~\eqref{P} is posed on a variable time-domain, we first introduce a
transformation to the unit time interval, which is the basis for the subsequent analysis
and the numerical methods.

\subsection{Change of variables}
For $\nu \in \Rplus$ we perform a change of variable \(t \mapsto \nu t\)
and obtain the transformed state equation
\begin{equation*}
\partial_t u(t) - \nu\Lap u(t) = \nu\ControlOp q(t)\mbox{,}\quad
t \in (0,1),\quad
u(0) = u_0\mbox{.}
\end{equation*}
For the transformed state equation on the unit time interval $I = (0,1)$, the parameter \(\nu\)
replaces the free end time \(T\).
Standard results for parabolic equations (see, e.g.,~\cite[Theorem~2, Chapter XVIII, \S3]{Dautray1992}) imply that for each pair $(\nu,q) \in \Rplus\times\Q(0,1)$ there exists a unique solution to the transformed state equation.
Let $S \colon \Rplus\times\Q(0,1) \rightarrow W(0,1)$, $(\nu,q)\mapsto u$ denote the corresponding control-to-state mapping. 
We endow the product space $\R\times\Lcontrol{2}$ with the canonical inner product 
and abbreviate its norm as
\begin{equation*}
\prodnorm{\delta\nu, \delta q} = \left(\abs{\delta\nu}^2 + \norm{\delta q}^2_{\Lcontrol{2}}\right)^{1/2}.
\end{equation*}
For convenience of notation, we sometimes abbreviate \(\chi = (\nu,q)\).
Moreover, we introduce the reduced objective and constraint functionals as
\begin{equation*}
g(\nu,q) \ldef G(i_1S(\nu,q))\mbox{,}\quad
j(\nu,q) \ldef \int_0^1 \nu \left(1+\frac{\alpha}{2}\norm{q(t)}_{\LcontrolSpatial}^2\right) \D{t}\mbox{.}
\end{equation*}	
The transformed optimal control problem is then given by
\begin{align}\label{Pt}\tag{\mbox{$\hat{P}$}}
\inf_{\substack{\nu\in\Rplus\\ q \in \Qad(0,1)}}&j(\nu, q) \quad\mbox{subject to}\quad g(\nu,q) \leq 0\mbox{.}
\end{align}
The definition of the set of admissible controls $\Qad$ transfers to the transformed problem, because of time
independence of the control constraints. In fact, both
problems~\eqref{Pt} and~\eqref{P} are equivalent; cf., e.g., \cite[Proposition~4.6]{Bonifacius2017}.
Because no ambiguity arises between~\eqref{Pt} and~\eqref{P}, we do not rename variables.

Since there exists at least one feasible control due to
\cref{assumption:existence_feasible_control}, well-posedness of~\eqref{Pt} is obtained by standard
arguments; cf., e.g., \cite[Proposition~4.1]{Bonifacius2017}. Note, that
\(\nu = 0\) is not admissible due to the assumption \(G(u_0) > 0\), and that the optimal
solution must fulfill the terminal constraint with equality (otherwise, a control with a
shorter time is still admissible, while having a smaller objective value).
\begin{proposition}
  Problem~\eqref{Pt} admits a solution $(\bar{\nu}, \bar{q}) \in \Rplus\times\Qad(0,1)$ with
  associated state \(\bar{u} = S(\bar{\nu},\bar{q})\).
  Moreover, it holds \(g(\bar{\nu},\bar{q}) = G(\bar{u}(1)) = 0\).
\end{proposition}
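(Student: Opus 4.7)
The plan is to apply the direct method of the calculus of variations to establish existence, and then argue optimality-of-activation by a simple perturbation of $\nu$.

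First I would take a minimizing sequence $(\nu_n,q_n)\in\Rplus\times\Qad(0,1)$ with $g(\nu_n,q_n)\le 0$ and $j(\nu_n,q_n)\to\inf\eqref{Pt}$. An upper bound $\nu_n\le C$ follows because \cref{assumption:existence_feasible_control} supplies a feasible $(T,q)$, and for $\nu_n$ large enough $j(\nu_n,q_n)\ge\nu_n$ would exceed $j(T,q)$. The lower bound $\nu_n\ge c>0$ is the first point requiring care: using the standard parabolic energy estimate applied to the transformed state equation one obtains
\begin{equation*}
\norm{S(\nu_n,q_n)(1)-u_0}_{L^2(\Omega)}\le c\,\nu_n^{1/2}\bigl(\norm{u_0}_{H^1_0}+\norm{q_n}_{\Lcontrol{2}}\bigr),
\end{equation*}
so the continuity of $G$ on $L^2(\Omega)$ and the assumption $G(u_0)>0$ force $G(S(\nu_n,q_n)(1))>0$ for small $\nu_n$, contradicting feasibility. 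Since $q_n$ is uniformly bounded in $L^\infty((0,1)\times\omega)\subset\Lcontrol{2}$, I extract subsequences with $\nu_n\to\bar\nu\in\Rplus$ and $q_n\rightharpoonup\bar q$ in $\Lcontrol{2}$. The set $\Qad(0,1)$ is convex and closed in $\Lcontrol{2}$, hence weakly closed, so $\bar q\in\Qad(0,1)$.

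Next I would pass to the limit in the state equation. Writing the transformed equation in variational form and testing with elements of a dense subspace, linearity together with the continuous dependence of the bilinear form on $\nu$ shows that $u_n=S(\nu_n,q_n)\rightharpoonup\bar u=S(\bar\nu,\bar q)$ in $W(0,1)$; the key observation is that $\nu_n\to\bar\nu$ strongly, while $\ControlOp q_n\rightharpoonup\ControlOp\bar q$ in $L^2((0,1);L^2(\Omega))$ by linearity and continuity of $\ControlOp$. The trace map $i_1\colon W(0,1)\to L^2(\Omega)$ is in fact compact (by Aubin--Lions, using $W(0,1)\embeddingc C([0,1];L^2(\Omega))$ or the compactness at the endpoint), so $u_n(1)\to\bar u(1)$ strongly in $L^2(\Omega)$, whence $g(\bar\nu,\bar q)=G(\bar u(1))\le 0$ by continuity of $G$. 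Finally, for the objective, I write $j(\nu_n,q_n)=\nu_n+\tfrac{\alpha\nu_n}{2}\norm{q_n}_{\Lcontrol{2}}^2$; since $\nu_n\to\bar\nu>0$ and $q\mapsto\norm{q}_{\Lcontrol{2}}^2$ is weakly lower semicontinuous, we obtain $j(\bar\nu,\bar q)\le\liminf j(\nu_n,q_n)$. Hence $(\bar\nu,\bar q)$ is a minimizer.

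For the activity of the terminal constraint, I argue by contradiction. Suppose $G(\bar u(1))<0$. The map $\nu\mapsto G(S(\nu,\bar q)(1))$ is continuous on $\Rplus$ by the same parabolic continuity estimate applied to differences, so there exists $\nu'<\bar\nu$ with $G(S(\nu',\bar q)(1))\le 0$, making $(\nu',\bar q)$ admissible. But
\begin{equation*}
j(\nu',\bar q)=\nu'\bigl(1+\tfrac{\alpha}{2}\norm{\bar q}_{\Lcontrol{2}}^2\bigr)<\bar\nu\bigl(1+\tfrac{\alpha}{2}\norm{\bar q}_{\Lcontrol{2}}^2\bigr)=j(\bar\nu,\bar q),
\end{equation*}
contradicting optimality. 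Therefore $g(\bar\nu,\bar q)=0$.

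The main obstacle I anticipate is verifying the lower bound $\nu_n\ge c>0$ rigorously; everything else is standard direct-method machinery. The perturbation argument for activity of $g$ is almost trivial because $j$ depends linearly and positively on $\nu$ for fixed $q$.
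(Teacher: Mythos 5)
Your overall strategy coincides with the paper's (the paper only sketches this proof, citing standard arguments: direct method for existence, exclusion of $\nu=0$ via $G(u_0)>0$, and activity of the terminal constraint because a shorter time would be feasible and cheaper). Your lower bound $\nu_n\geq c>0$, the weak compactness argument, the lower semicontinuity of $j$, and the perturbation argument for $g(\bar\nu,\bar q)=0$ are all correct.

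There is, however, one genuinely false step: the claim that the trace map $i_1\colon W(0,1)\to L^2(\Omega)$ is compact, justified by an alleged compact embedding $W(0,1)\embeddingc C([0,1];L^2(\Omega))$. Aubin--Lions gives compactness of $W(0,1)$ into $L^2(I;L^2(\Omega))$ (or into $C([0,1];H^{-s})$ for $s>0$), not into $C([0,1];L^2)$; indeed $L^2(\Omega)$ is the \emph{exact} trace space of $W(0,1)$, and the trace is not compact. A concrete counterexample: with $L^2$-normalized Dirichlet eigenfunctions $e_n$ and eigenvalues $\lambda_n\to\infty$, the functions $u_n(t)=\e^{-\lambda_n(1-t)}e_n$ are bounded in $W(0,1)$ and converge weakly to $0$, yet $u_n(1)=e_n$ does not converge strongly in $L^2(\Omega)$. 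The gap is easily repaired in two ways: (i) since $G(u)=\tfrac12\norm{u-u_d}_{L^2}^2-\tfrac{\delta_0^2}{2}$ is convex and continuous on $L^2(\Omega)$, it is weakly lower semicontinuous, so $u_n(1)\rightharpoonup\bar u(1)$ (which follows from weak continuity of the bounded linear map $i_1$) already yields $G(\bar u(1))\leq\liminf_n G(u_n(1))\leq 0$; or (ii) use that the right-hand sides $\nu_nBq_n$ are bounded in $L^2(I;L^2)$ and $u_0\in H^1_0$, so by maximal parabolic regularity (cf.\ \cref{prop:regularityOptimalSolution}) the $u_n$ are bounded in $C([0,1];H^1_0)$, and Rellich then upgrades $u_n(1)\rightharpoonup\bar u(1)$ to strong $L^2$ convergence. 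Either fix keeps the rest of your argument intact.
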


We now give several simple auxiliary results, which will be needed throughout the paper. 
First, by standard arguments, we obtain the following differentiability result.
\begin{lemma}\label{lemma:control_to_state_differentiable}
	Let $\nu \in \Rplus$ and $q \in \Q(0,1)$.
	The control-to-state mapping $S$ is twice continuously Fr\'echet-differentiable. Moreover, $\delta u = S'(\nu,q)(\delta \nu, \delta q) \in W(0,1)$ is the unique solution to
	\begin{equation*}
	\partial_t \delta u - \nu\Lap\delta u = \delta\nu (\ControlOp q+\Lap u)+ \nu \ControlOp\delta q\mbox{,}\quad
	\delta u(0) = 0\mbox{,}
	\end{equation*}
	for $(\delta \nu, \delta q) \in \R\times\Lcontrol{2}$ and $\delta\tilde{u} = S''(\nu,q)(\delta\nu_1, \delta q_1;\delta \nu_2, \delta q_2) \in W(0,1)$ is the unique solution to
	\begin{equation*}
	\partial_t \delta\tilde{u} - \nu\Lap\delta\tilde{u} = \delta\nu_1\left(\ControlOp\delta q_2 + \Lap\delta u_2\right)+ \delta\nu_2 \left(\ControlOp\delta q_1 +\Lap\delta u_1\right)\mbox{,}\quad
	\delta\tilde{u}(0) = 0\mbox{,}
	\end{equation*}
	for $(\delta \nu_i, \delta q_i) \in \R\times\Lcontrol{2}$ and $\delta u_i = S'(\nu,q)(\delta \nu_i, \delta q_i)$, $i = 1,2$.
\end{lemma}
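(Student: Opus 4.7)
The plan is to invoke the implicit function theorem on the defining state equation and then recover the explicit derivative formulas by implicit differentiation. I would introduce the auxiliary map
\[
F(\nu,q,u) \ldef \bigl(\partial_t u - \nu\Lap u - \nu\ControlOp q,\; u(0) - u_0\bigr),
\]
acting between $\Rplus \times \Q(0,1) \times W(0,1)$ and $L^2(0,1;H^{-1}) \times L^2(\Omega)$, so that the control-to-state relation is encoded by $F(\nu,q,S(\nu,q)) = 0$. The map $F$ is at most bilinear in its three arguments --- linear separately in $u$ and in $q$, and affine in $\nu$ --- with the only nontrivial second-order partials being the mixed ones $F_{\nu u}$ and $F_{\nu q}$. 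In particular $F$ is $C^\infty$ between Banach spaces, and all partial derivatives of order three or higher vanish identically.

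The crux is to verify that at any reference point the partial derivative
\[
F_u(\nu,q,u) \colon v \longmapsto \bigl(\partial_t v - \nu\Lap v,\; v(0)\bigr)
\]
is an isomorphism from $W(0,1)$ onto the target space. This is precisely well-posedness of the linear heat equation on $(0,1)$ with right-hand side in $L^2(0,1;H^{-1})$ and initial value in $L^2(\Omega)$ --- the same classical result already used in the excerpt to define $S$. The implicit function theorem then yields that $S$ is of class $C^\infty$ on $\Rplus \times \Q(0,1)$, and in particular twice continuously Fréchet differentiable as claimed.

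To pin down the explicit formulas, I would differentiate the identity $F(\nu,q,S(\nu,q)) = 0$ once and twice. Using $F_\nu = -\Lap u - \ControlOp q$, $F_q = -\nu\ControlOp$, and $F_u = \partial_t - \nu\Lap$, the first-order relation $F_\nu\delta\nu + F_q\delta q + F_u\delta u = 0$ rearranges directly to the stated equation for $\delta u = S'(\nu,q)(\delta\nu,\delta q)$; the zero initial condition is inherited from differentiating $u(0) = u_0$. A second differentiation in direction $(\delta\nu_2,\delta q_2)$, using that only the mixed partials $F_{\nu u}$ and $F_{\nu q}$ survive at second order, produces after collecting the symmetric pair of cross terms precisely the equation given for $\delta\tilde u = S''(\nu,q)(\delta\nu_1,\delta q_1;\delta\nu_2,\delta q_2)$, again with zero initial value.

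I do not expect a serious obstacle: the only nontrivial ingredient is the isomorphism property of $F_u$, which is a restatement of standard parabolic well-posedness, and the remainder is symbolic bookkeeping made trivial by the low polynomial degree of $F$. A more hands-on alternative would bypass the implicit function theorem by writing the Taylor remainder $S(\nu+\delta\nu,q+\delta q) - S(\nu,q) - \delta u$ as the solution of a linear parabolic problem with data of order $\prodnorm{\delta\nu,\delta q}^2$, thereby establishing Fréchet differentiability and the derivative formula in one stroke, and likewise for $S''$.
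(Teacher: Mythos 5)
Your argument is correct and complete. The paper itself gives no proof of this lemma, deferring to ``standard arguments,'' and your implicit-function-theorem construction is a legitimate instantiation of those arguments: the map $F(\nu,q,u) = (\partial_t u - \nu\Lap u - \nu\ControlOp q,\, u(0)-u_0)$ is indeed a quadratic polynomial between Banach spaces (hence $C^\infty$ with only the mixed bilinear terms $\nu\Lap u$ and $\nu\ControlOp q$ contributing at second order), the isomorphism property of $F_u$ is exactly the well-posedness result from \cite{Dautray1992} already invoked to define $S$, and your implicit differentiation reproduces both stated equations, including the symmetric pair of cross terms in the second derivative and the zero initial conditions. The only point worth making explicit is that the right-hand sides land in $L^2(I;H^{-1})$ because $\Lap u, \Lap\delta u_i \in L^2(I;H^{-1})$ for $u,\delta u_i \in W(0,1)$, which justifies $\delta u, \delta\tilde{u} \in W(0,1)$; your alternative Taylor-remainder route is the other standard way to obtain the same conclusion and is equally acceptable.
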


By means of \cref{lemma:control_to_state_differentiable}, the reduced constraint mapping
$g\colon \Rplus\times\Q(0,1) \rightarrow \R$ is twice continuously
Fr{\'e}chet-differentiable. Moreover, the derivatives can be computed by the chain-rule.
\begin{proposition}\label{proposition:derivatives_g}
For any \((\nu,q) \in \Rplus\times \Q(0,1)\)
and $u = S(\nu,q)$, we have
\begin{align*}
g'(\nu,q)(\delta{\nu}, \delta{q})
 &= \left(u(1) - u_d,\delta{u}(1)\right), \\
g''(\nu,q)(\delta{\nu}_1, \delta{q}_1; \delta{\nu}_2, \delta{q}_2)
 &= \left(\delta{u}_1(1),\delta{u}_2(1)\right) + \left(u(1) - u_d,\delta{\tilde{u}}(1)\right),
\end{align*}
where \(\delta{u}_1\), \(\delta{u}_2\), and \(\delta{\tilde{u}}\) are defined as in
\cref{lemma:control_to_state_differentiable}.
\end{proposition}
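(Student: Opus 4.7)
The plan is to realize $g$ as the composition $g = G \circ i_1 \circ S$ of three maps whose differentiability properties are already available, and then apply the chain rule. The control-to-state map $S$ is twice continuously Fr\'echet differentiable by \cref{lemma:control_to_state_differentiable}, the trace $i_1 \colon W(0,1) \to L^2(\Omega)$ is linear and continuous (hence equals its own derivative and has vanishing second derivative), and $G$ as defined in \eqref{eq:terminal_constraint} is a smooth quadratic functional on $L^2(\Omega)$. Composing a $C^2$ map with a smooth quadratic one yields again a $C^2$ map, so $g$ is twice continuously Fr\'echet differentiable on $\Rplus \times Q(0,1)$.

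First I would compute the derivatives of $G$ itself. A direct expansion of $G(u + \eta) - G(u)$ gives $G'(u)\eta = (u - u_d, \eta)_{L^2(\Omega)}$ and $G''(u)(\eta_1, \eta_2) = (\eta_1, \eta_2)_{L^2(\Omega)}$, independent of $u$. With this in hand, the first derivative of $g$ follows from a single application of the chain rule: writing $\delta u = S'(\nu,q)(\delta\nu,\delta q)$, we have
\begin{equation*}
g'(\nu,q)(\delta\nu,\delta q) = G'(u(1))\bigl(i_1 S'(\nu,q)(\delta\nu,\delta q)\bigr) = \bigl(u(1) - u_d, \delta u(1)\bigr),
\end{equation*}
which recovers the first claimed formula.

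For the second derivative I would differentiate the map $(\nu,q) \mapsto G'(i_1 S(\nu,q))\bigl(i_1 S'(\nu,q)(\cdot,\cdot)\bigr)$ using the product rule, exploiting the bilinearity of $G''$ and the fact that $i_1$ commutes with differentiation. This yields two contributions: one from differentiating the outer $G'$ factor, which produces $G''(u(1))(\delta u_1(1), \delta u_2(1)) = (\delta u_1(1), \delta u_2(1))_{L^2(\Omega)}$, and one from differentiating the inner $S'$ factor, which produces $G'(u(1))(i_1 S''(\nu,q)(\delta\nu_1,\delta q_1;\delta\nu_2,\delta q_2)) = (u(1) - u_d, \delta\tilde{u}(1))_{L^2(\Omega)}$ with $\delta\tilde{u}$ as in \cref{lemma:control_to_state_differentiable}. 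Summing the two gives the stated expression for $g''$.

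The argument is essentially bookkeeping; the only mild obstacle is to justify that the chain rule is applicable in the infinite-dimensional setting, i.e.\ that the compositions are meaningful between the correct spaces. This reduces to checking that $i_1 \circ S$ takes values in $L^2(\Omega)$ on which $G$ is defined and smooth, which follows from $W(0,1) \embedding C([0,1]; L^2(\Omega))$ together with \cref{lemma:control_to_state_differentiable}. No analytic work beyond this compatibility check is needed.
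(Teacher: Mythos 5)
Your proposal is correct and follows exactly the route the paper takes: the paper states that $g$ is twice continuously Fr\'echet-differentiable by \cref{lemma:control_to_state_differentiable} and that the derivatives "can be computed by the chain-rule," which is precisely your decomposition $g = G \circ i_1 \circ S$ with the explicit formulas for $G'$ and $G''$ filled in. No discrepancy to report.
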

Based on this, we can derive a continuity result on the second derivative.
\begin{corollary}\label{corollary:weak_lower_semicontinuity_g}
	Let $(\nu,q) \in \Rplus\times\Q(0,1)$.
	If $\delta\nu_n \rightarrow \delta\nu$ in $\R$ and $\delta q_n \rightharpoonup \delta q$ weakly in $\Lcontrol{2}$, then
	\begin{equation*}
	g''(\nu, q)[\delta\nu, \delta q]^2 \leq \liminf_{n\rightarrow \infty} g''(\nu, q)[\delta\nu_n, \delta q_n]^2\mbox{.}
	\end{equation*}
\end{corollary}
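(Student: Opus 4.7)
The plan is to leverage the explicit formula for $g''$ from \cref{proposition:derivatives_g} and reduce the statement to (a) weak lower semicontinuity of a squared $L^2$-norm at the endpoint and (b) convergence of an $L^2$-inner product against a fixed function. The main work consists in passing to the weak limit in the linearized and second-order linearized state equations, and the main obstacle is handling the product $\delta\nu_n \cdot(\Lap\delta u_n)$ appearing on the right-hand side of the equation for $\delta\tilde{u}_n$.

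First I would abbreviate $\delta u_n = S'(\nu,q)(\delta\nu_n,\delta q_n)$ and $\delta\tilde{u}_n = S''(\nu,q)(\delta\nu_n,\delta q_n;\delta\nu_n,\delta q_n)$, together with the corresponding limit objects $\delta u$ and $\delta\tilde{u}$, and recall from \cref{proposition:derivatives_g} that
\begin{equation*}
g''(\nu,q)[\delta\nu_n,\delta q_n]^2 = \norm{\delta u_n(1)}_{L^2(\Omega)}^2 + \inner{u(1)-u_d,\delta\tilde{u}_n(1)}_{L^2(\Omega)},
\end{equation*}
with an analogous identity for the limit. Since the sequence $(\delta\nu_n,\delta q_n)$ is weakly convergent and therefore bounded in $\R\times\Q(0,1)$, the linear equation in \cref{lemma:control_to_state_differentiable} shows that $\delta u_n$ is bounded in $W(0,1)$; passing to the limit in that equation, using that $\delta\nu_n \to \delta\nu$ strongly and $B\delta q_n \rightharpoonup B\delta q$ weakly in $L^2(I;L^2(\Omega))$, identifies the weak limit of $\delta u_n$ in $W(0,1)$ as $\delta u$.

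From the continuity of the trace map $i_1\colon W(0,1)\to L^2(\Omega)$, it follows that $\delta u_n(1)\rightharpoonup \delta u(1)$ in $L^2(\Omega)$, and hence by the standard weak lower semicontinuity of the norm,
\begin{equation*}
\norm{\delta u(1)}_{L^2(\Omega)}^2 \leq \liminf_{n\to\infty}\norm{\delta u_n(1)}_{L^2(\Omega)}^2.
\end{equation*}
For the mixed term I would next inspect the defining equation of $\delta\tilde{u}_n$, whose right-hand side equals $2\delta\nu_n\bigl(B\delta q_n + \Lap\delta u_n\bigr)$. Since $\delta\nu_n\to\delta\nu$ strongly in $\R$ and $B\delta q_n + \Lap\delta u_n\rightharpoonup B\delta q + \Lap\delta u$ weakly in $L^2(I;H^{-1}(\Omega))$ by the previous step, the product converges weakly to $2\delta\nu(B\delta q + \Lap\delta u)$ in $L^2(I;H^{-1}(\Omega))$; linearity and well-posedness of the associated parabolic equation then yield $\delta\tilde{u}_n\rightharpoonup \delta\tilde{u}$ in $W(0,1)$, and consequently $\delta\tilde{u}_n(1)\rightharpoonup\delta\tilde{u}(1)$ in $L^2(\Omega)$ by continuity of $i_1$.

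Since $u(1)-u_d\in L^2(\Omega)$ is fixed, the second contribution therefore converges,
\begin{equation*}
\inner{u(1)-u_d,\delta\tilde{u}_n(1)}_{L^2(\Omega)} \longrightarrow \inner{u(1)-u_d,\delta\tilde{u}(1)}_{L^2(\Omega)}.
\end{equation*}
Combining the liminf-inequality for the squared norm with the convergence of the linear term yields the claim. The only non-routine point is verifying that the strong$\times$weak product on the right-hand side of the second-order linearized equation does pass to the limit in $L^2(I;H^{-1}(\Omega))$; I would address this by testing against arbitrary elements of $L^2(I;H^1_0(\Omega))$ and exploiting that the scalar factor $\delta\nu_n$ converges in $\R$, which reduces the question to the already established weak convergence of $B\delta q_n + \Lap\delta u_n$.
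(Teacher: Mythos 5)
Your proposal is correct and follows essentially the same route as the paper's (much terser) proof: use the structure of $g''$ from \cref{proposition:derivatives_g}, establish weak convergence of $S'(\nu,q)(\delta\nu_n,\delta q_n)$ and $S''(\nu,q)[\delta\nu_n,\delta q_n]^2$ in $W(0,1)$ via the bilinear structure of the linearized equations, and conclude with continuity of the trace $i_1$, weak lower semicontinuity of the squared norm for the first term, and plain weak convergence for the term linear in $\delta\tilde{u}_n(1)$. Your treatment of the strong-times-weak product on the right-hand side of the second linearized equation makes explicit a point the paper subsumes under ``the bilinear structure,'' but it is the same argument.
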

\begin{proof}
We use the structure of the derivative, see \cref{proposition:derivatives_g}, and
verify that
\begin{align*}
S'(\nu,q)(\delta\nu_n, \delta q_n) &\rightharpoonup S'(\nu,q)(\delta\nu, \delta q)\quad \mbox{in~}W(0,1),\\
S''(\nu,q)[\delta\nu_n, \delta q_n]^2 &\rightharpoonup S''(\nu,q)[\delta\nu, \delta q]^2\quad \mbox{in~}W(0,1),
\end{align*}
due to the bilinear structure. Using the fact that the trace mapping \(i_1\) is
continuous on \(W(0,1)\), we infer the result.
\end{proof}

Moreover, a formula for the gradient of the constraint functional
can be derived based on the adjoint approach.
To avoid confusion with the spatial gradient \(\nabla\), we denote the
gradient of \(g\) by \(g'(\cdot)^*\) in the following.
\begin{proposition}
For $\nu \in \Rplus$, $q \in \Q(0,1)$, \(u = S(\nu,q)\), and $\mu \in \R$ we have the representation
\begin{equation}\label{eq:terminal_constraint_adjoint}
\mu \, g'(\nu,q)^* = \left(
\begin{array}{l}
  \int_{0}^{1}\pair{Bq + \Lap u, z}\\
  \nu B^*z
\end{array} \right)\mbox{,}
\end{equation}
where $z \in W(0,1)$ is the unique solution to the dual equation
\begin{equation*}
-\partial_t z - \nu\Lap z = 0\mbox{,}\quad 
z(1) = \mu(u(1) - u_d)\mbox{;}
\end{equation*}
\end{proposition}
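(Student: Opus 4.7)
The approach is a standard adjoint calculation: use the expression for \(g'(\nu,q)(\delta\nu,\delta q)\) already given in \cref{proposition:derivatives_g} and eliminate the sensitivity \(\delta u = S'(\nu,q)(\delta\nu,\delta q)\) by testing against the adjoint state \(z\) through integration by parts in time.

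Concretely, I would start from
\begin{equation*}
\mu\, g'(\nu,q)(\delta\nu,\delta q) = \mu(u(1) - u_d, \delta u(1))_{L^2(\Omega)} = (z(1), \delta u(1))_{L^2(\Omega)},
\end{equation*}
where \(z(1)\) is the prescribed terminal value of the adjoint equation. The next step is to rewrite the boundary term \((z(1),\delta u(1))\) using integration by parts in time: since \(\delta u(0) = 0\), we have
\begin{equation*}
(z(1),\delta u(1))_{L^2(\Omega)} = \int_{0}^{1}\pair{\partial_t \delta u, z} + \pair{\delta u, \partial_t z}\D{t}.
\end{equation*}
Using self-adjointness of \(-\Lap\) on \(H^1_0\), one gets \(\int_0^1 \nu\pair{-\Lap\delta u,z}\D{t} = \int_0^1 \nu\pair{\delta u,-\Lap z}\D{t}\), so after inserting the forward equation for \(\delta u\) from \cref{lemma:control_to_state_differentiable} on the left and the dual equation \(-\partial_t z - \nu\Lap z = 0\) on the right, all terms involving \(\delta u\) itself cancel and only the inhomogeneity survives:
\begin{equation*}
(z(1),\delta u(1))_{L^2(\Omega)} = \int_{0}^{1}\pair{\delta\nu(\ControlOp q + \Lap u) + \nu \ControlOp\delta q, z}\D{t}.
\end{equation*}

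Finally, splitting into the \(\delta\nu\) and \(\delta q\) contributions and using the definition of the adjoint of the control operator, \(\pair{B\delta q,z} = (\delta q, B^\ast z)_{L^2(\omega)}\), produces precisely the two components in~\eqref{eq:terminal_constraint_adjoint} when identified against the canonical inner product on \(\R\times\Lcontrol{2}\). The regularity \(z \in W(0,1)\) needed to justify all of the above (trace at \(t=1\), duality pairing with \(\Lap u\) and \(\Bq\), symmetry of \(-\Lap\)) follows from the standard well-posedness theory for the backward heat equation with terminal data \(\mu(u(1)-u_d)\in H^1_0\subset L^2\), identical to the one invoked for the forward equation. The only mildly delicate point, and essentially the main obstacle, is making sure the integration by parts in time is valid in the \(H^{-1}/H^1_0\) duality rather than the \(L^2\) inner product, which is handled by the standard formula \(\frac{d}{dt}(u,v)_{L^2} = \pair{\partial_t u, v} + \pair{u,\partial_t v}\) for \(u,v\in W(0,1)\); beyond that, the argument is a routine chain of manipulations.
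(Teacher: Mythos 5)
Your proposal is correct and is exactly the standard adjoint calculation that the paper delegates to the cited reference (\cite[Proposition~4.8]{Bonifacius2017}): identify $\mu(u(1)-u_d,\delta u(1))$ with $(z(1),\delta u(1))$, integrate by parts in time in the $W(0,1)$ sense using $\delta u(0)=0$, cancel the Laplacian terms by self-adjointness, and read off the two gradient components from the surviving inhomogeneity. No gaps; the one point you flag as delicate (the product rule for the $H^{-1}/H^1_0$ duality on $W(0,1)$) is indeed the only technical ingredient and is handled correctly.
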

\begin{proof}
The result can be derived as in, e.g., \cite[Proposition~4.8]{Bonifacius2017}.
\end{proof}
Finally, for \(\nu\) bounded uniformly from below and above, the derivatives of \(g\) can
be estimated by uniform constants, which will be important in the following.
\begin{proposition}\label{prop:stability_g} 
	Let $0 < \nu_{\min} < \nu_{\max}$ be given. Then there exists $c > 0$ such that for all $\delta\nu \in \R$ and $\delta q \in \Lcontrol{2}$ it holds
	\begin{align*}
	\abs{g'(\nu, q)(\delta\nu, \delta q)} & \leq c\prodnorm{\delta\nu, \delta q}\mbox{,}\\
	\abs{g''(\nu, q)[\delta\nu, \delta q]^2}  &\leq c \prodnorm{\delta\nu, \delta q}^2\mbox{,}		
	\end{align*}
	for all $\nu_{\min} \leq \nu \leq \nu_{\max}$ and $q \in \Qad(0,1)$.
	Moreover,
	\begin{equation*}
	\abs{\left(g'(\nu_1,q_1) - g'(\nu_2,q_2)\right)(\delta\nu, \delta q)} \leq c\prodnorm{\nu_1-\nu_2,q_1-q_2}\prodnorm{\delta\nu, \delta q}\mbox{,}
	\end{equation*}
	for all $\nu_{\min} \leq \nu_1, \nu_2 \leq \nu_{\max}$ and $q_1, q_2 \in \Qad(0,1)$.
\end{proposition}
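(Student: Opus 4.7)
All three estimates will follow from standard parabolic energy/stability estimates applied to the (linearized) state equation, combined with the adjoint-free representations in \cref{proposition:derivatives_g} and the fact that $q\in\Qad(0,1)$ is uniformly bounded in $L^\infty$ (hence in $\Lcontrol{2}$) and $u_0\in H^1_0$. The key uniformity ingredient is that, since $\nu\in[\nu_{\min},\nu_{\max}]$, the coercivity constant of the operator $-\nu\Lap$ on $H^1_0$ and the resulting hidden constants in parabolic estimates are uniform. In particular, for $u=S(\nu,q)$, a standard energy estimate in $W(0,1)$ yields $\norm{u}_{W(0,1)}\le c\bigl(\norm{u_0}_{H^1_0}+\nu_{\max}\norm{Bq}_{L^2(I;H^{-1})}\bigr)\le c$, so in particular $\norm{u(1)-u_d}_{L^2}\le c$ with a constant independent of $(\nu,q)$ in the considered set.

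For the first two estimates I would apply the analogous energy estimate to the linearized equations in \cref{lemma:control_to_state_differentiable}. The right-hand side of the equation for $\delta u$ lies in $L^2(I;H^{-1})$ with norm bounded by $c(\abs{\delta\nu}(\norm{Bq}_{L^2(I;L^2)}+\norm{\Lap u}_{L^2(I;H^{-1})})+\nu_{\max}\norm{B\delta q}_{L^2(I;H^{-1})})\le c\prodnorm{\delta\nu,\delta q}$, using the uniform bound on $u$. Hence $\norm{\delta u}_{W(0,1)}\le c\prodnorm{\delta\nu,\delta q}$, and since $W(0,1)\embedding C(\bar I;L^2)$, also $\norm{\delta u(1)}_{L^2}\le c\prodnorm{\delta\nu,\delta q}$. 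Combining with $\norm{u(1)-u_d}_{L^2}\le c$ and the representation in \cref{proposition:derivatives_g} gives the bound on $g'$. For $g''$, applying the same energy estimate to the equation for $\delta\tilde u$, whose right-hand side can now be controlled by $\abs{\delta\nu_1}(\norm{B\delta q_2}_{L^2(I;H^{-1})}+\norm{\Lap\delta u_2}_{L^2(I;H^{-1})})$ plus the symmetric term, yields $\norm{\delta\tilde u(1)}_{L^2}\le c\prodnorm{\delta\nu_1,\delta q_1}\prodnorm{\delta\nu_2,\delta q_2}$, and the bound on $g''$ follows from the formula in \cref{proposition:derivatives_g}.

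For the Lipschitz estimate I would write, with $u_i=S(\nu_i,q_i)$ and $\delta u_i=S'(\nu_i,q_i)(\delta\nu,\delta q)$,
\begin{equation*}
(g'(\nu_1,q_1)-g'(\nu_2,q_2))(\delta\nu,\delta q) = \left(u_1(1)-u_2(1),\delta u_1(1)\right)+\left(u_2(1)-u_d,\delta u_1(1)-\delta u_2(1)\right).
\end{equation*}
The first summand is controlled by the stability estimate for $S$: subtracting the state equations gives a parabolic equation for $u_1-u_2$ with right-hand side $(\nu_1-\nu_2)(Bq_2+\Lap u_2)+\nu_1 B(q_1-q_2)$, which is bounded in $L^2(I;H^{-1})$ by $c\prodnorm{\nu_1-\nu_2,q_1-q_2}$ using uniform bounds on $u_2$, $q_2$; thus $\norm{u_1(1)-u_2(1)}_{L^2}\le c\prodnorm{\nu_1-\nu_2,q_1-q_2}$. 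For the second summand I would perform the same subtraction on the equations defining $\delta u_1,\delta u_2$ in \cref{lemma:control_to_state_differentiable}; the resulting equation has a right-hand side involving $(\nu_1-\nu_2)$, $B(q_1-q_2)$, and $\Lap(u_1-u_2)$, which, combined with the previously obtained Lipschitz estimate for $u$ and the bound $\norm{\delta u_i}_{W(0,1)}\le c\prodnorm{\delta\nu,\delta q}$, yields $\norm{\delta u_1(1)-\delta u_2(1)}_{L^2}\le c\prodnorm{\nu_1-\nu_2,q_1-q_2}\prodnorm{\delta\nu,\delta q}$.

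The main obstacle, as usual in this type of argument, is bookkeeping: one has to isolate the $\Lap u$-term in the linearized equation and make sure all constants depend only on $\nu_{\min},\nu_{\max}$ and the control bounds $q_a,q_b$, not on the specific $(\nu,q)$ under consideration. Once the uniform $W(0,1)$-bound on $u$ (and via the linearization on $\delta u$) is in place, all three estimates reduce to routine applications of Cauchy--Schwarz and the trace embedding.
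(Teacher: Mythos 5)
Your proposal is correct and follows essentially the same route as the paper: the paper's proof simply reduces the three bounds to the stability estimates for $S$, $S'$, $S''$ and the Lipschitz estimates for $u_1-u_2$ and $\delta u_1-\delta u_2$ collected in \cref{prop:stabilityS}, combined with the representation of $g'$, $g''$ from \cref{proposition:derivatives_g} and the structure of $G$. You merely re-derive those energy estimates explicitly instead of citing them, with the same bookkeeping of the $\nu$-dependence through $\nu\in[\nu_{\min},\nu_{\max}]$.
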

\begin{proof}
	Since $g(\nu,q) = G(i_1S(\nu,q))$
	the result is a consequence of the stability properties of \(S\) (see~\cref{prop:stabilityS}, where also the precise
        dependency of the constants on \(\nu_{\min}\) and \(\nu_{\max}\) is given) and the structure of $G$.
\end{proof}

\subsection{First order optimality conditions}
The numerical analysis essentially relies on first and second order optimality conditions.
We start by discussing first order necessary conditions; see also \cite{Raymond1999}.
To this end, let $(\bar{\nu},\bar{q})$ be a locally optimal control for~\eqref{P}.
We require the following \emph{linearized Slater} condition.
\begin{assumption}
\label{assumption:linearized_slater}
We assume that
\begin{equation}\label{eq:definition_linearized_slater_condition}
  \bar{\eta} \ldef - \partial_\nu g(\bar{\nu}, \bar{q}) > 0\mbox{.}
\end{equation}
\end{assumption}
Note that by \cref{assumption:linearized_slater} and \(g(\bar{\nu}, \bar{q}) = 0\), the
point \(\breve{\chi}^\gamma = (\bar{\nu} + \gamma, \bar{q}) \in \Rplus\times\Qad(0,1)\)
defined for \(\gamma > 0\) fulfills
\begin{equation}\label{eq:definition_linearized_slater_condition_general}
  g(\bar{\chi}) + g'(\bar{\chi})(\breve{\chi}^\gamma - \bar{\chi}) = -\bar{\eta}\,\gamma < 0\mbox{,}
\end{equation}
which corresponds to a more familiar presentation of the linearized Slater condition. Thus, we essentially
assume this condition to hold in a special form. Roughly speaking,
we require the terminal constraint to decrease sufficiently when the time
horizon is enlarged over the optimal time.
However, we will see that, for the particular problem at hand,
\cref{assumption:linearized_slater} is already equivalent to qualified first order
conditions, and thus essentially equivalent to any other constraint qualification.

In order to state optimality conditions, we introduce the Lagrange function as
\begin{equation*}
\mathcal{L} \colon \Rplus\times\Q(0,1)\times\R \rightarrow \R\mbox{,}\quad
\mathcal{L}(\nu, q, \mu) \ldef j(\nu, q) + \mu\, g(\nu, q)\mbox{.}
\end{equation*}
Now, optimality conditions for~\eqref{Pt} in qualified form can be stated as follows: 
for given \(\bar{\nu} > 0\) and \(\bar{q} \in \Qad(0,1)\) with \(g(\bar{\nu},\bar{q})=0\)
there exists a \(\bar{\mu}\geq 0\), such that
\begin{equation}\label{eq:optimalityCondLagrange}
\partial_{(\nu,q)}\mathcal{L}(\bar{\nu},\bar{q},\bar{\mu})(\delta{\nu},q-\bar{q}) \geq 0
\quad \text{for all }(\delta{\nu},q) \in \R\times\Qad(0,1)\mbox{.}
\end{equation}
With \cref{assumption:linearized_slater}, a multiplier always exists and, due
to the special structure, it is always positive. We summarize this in the next result.
\begin{lemma}\label{lemma:first_order_optcond}
	Let $(\bar{\nu},\bar{q}) \in \Rplus\times\Qad(0,1)$ be a solution of~\eqref{P}
        with associated state $\bar{u} = S(\bar{\nu},\bar{q})$ and the linearized Slater
        condition~\eqref{eq:definition_linearized_slater_condition} hold. Then there exists
        a multiplier $\bar{\mu} \in (\,0,\, c/\bar{\eta}\,] \subset \Rplus$ such that
	\begin{align}
	\int_{0}^{1} 1 + \frac{\alpha}{2}\norm{\bar{q}(t)}_{\LcontrolSpatial}^2 + \pair{\ControlOp\bar{q}(t) + \Lap\bar{u}(t), \bar{z}(t)}\D{t} &= 0\mbox{,}\label{eq:opt_cond_hamiltonianConstant}\\
	\int_0^{1} \bar{\nu}\pair{\alpha \bar{q}(t)+\ControlOp^*\bar{z}(t), q(t) - \bar{q}(t)}\D{t} &\geq 0\mbox{,} & q &\in\Qad(0,1)\mbox{,}\label{eq:opt_cond_variationalInequality}\\
	G(\bar{u}(1)) &= 0\mbox{,}\label{eq:opt_cond_feasiblity}
	\end{align}
	where the \emph{adjoint state} $\bar{z} \in W(0,1)$ is determined by
	\begin{equation}\label{eq:adjoint_state_equation}
	-\partial_t \bar{z}(t) - \bar{\nu}\Lap\bar{z}(t) = 0\mbox{,}
        \quad t \in (0,1) \quad 
	\bar{z}(1) = \bar{\mu}(\bar{u}(1) - u_d)\mbox{.}
	\end{equation}
\end{lemma}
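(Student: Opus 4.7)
The plan is to apply the standard Karush--Kuhn--Tucker theorem in Banach spaces to problem~\eqref{Pt}, using~\cref{assumption:linearized_slater} as the constraint qualification. Feasibility~\eqref{eq:opt_cond_feasiblity} is already provided by the preceding existence proposition. Because $g\colon\Rplus\times\Q(0,1)\to\R$ is continuously Fr\'echet-differentiable (by~\cref{lemma:control_to_state_differentiable} and~\cref{proposition:derivatives_g}) and because~\eqref{eq:definition_linearized_slater_condition_general} shows that, for any $\gamma>0$, the direction $(\gamma,0)$ lies in the linearized feasible cone at $(\bar{\nu},\bar{q})$ and drives the linearization of $g$ strictly negative, the Zowe--Kurcyusz constraint qualification is satisfied. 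The abstract multiplier theorem thus produces $\bar{\mu}\geq 0$ with complementary slackness (automatic since $g(\bar{\nu},\bar{q})=0$) and the stationarity relation~\eqref{eq:optimalityCondLagrange}.

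Next, I would split~\eqref{eq:optimalityCondLagrange} into its two independent components. Taking $q=\bar{q}$ and varying $\delta\nu\in\R$ (possible because $\bar{\nu}$ is an interior point of $\Rplus$) forces the equality $\partial_\nu\mathcal{L}(\bar{\nu},\bar{q},\bar{\mu})=0$; substituting the explicit derivative $\partial_\nu j(\bar{\nu},\bar{q})=1+(\alpha/2)\int_0^1\norm{\bar{q}(t)}_{\LcontrolSpatial}^2\D{t}$ and the first row of the adjoint representation~\eqref{eq:terminal_constraint_adjoint} for $\bar{\mu}\,\partial_\nu g$ yields~\eqref{eq:opt_cond_hamiltonianConstant}. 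Conversely, taking $\delta\nu=0$ and arbitrary $q\in\Qad(0,1)$ gives $\partial_q\mathcal{L}(\bar{\nu},\bar{q},\bar{\mu})(q-\bar{q})\geq 0$; combining $\partial_q j(\bar{\nu},\bar{q})=\bar{\nu}\alpha\bar{q}$ with the second row of~\eqref{eq:terminal_constraint_adjoint}, which gives $\bar{\mu}\,\partial_q g=\bar{\nu}\ControlOp^*\bar{z}$, produces~\eqref{eq:opt_cond_variationalInequality}. The adjoint state is precisely the one defined by~\eqref{eq:adjoint_state_equation} with terminal condition $\bar{z}(1)=\bar{\mu}(\bar{u}(1)-u_d)$.

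For the bounds on $\bar{\mu}$, the identity $\partial_\nu\mathcal{L}=0$ together with \cref{assumption:linearized_slater} rearranges to $\bar{\mu}\,\bar{\eta}=\partial_\nu j(\bar{\nu},\bar{q})$. Since $\partial_\nu j(\bar{\nu},\bar{q})\geq 1$, the multiplier is strictly positive; since $\bar{q}\in\Qad(0,1)$ is uniformly bounded in $\LcontrolSpatial$, we obtain $\partial_\nu j(\bar{\nu},\bar{q})\leq c$ for a constant depending only on $q_a,q_b,\alpha$ (and $\abs{\omega}$ in the distributed case), so that $\bar{\mu}\in(0,c/\bar{\eta}]$ as claimed.

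The main obstacle is the careful verification that the abstract KKT framework applies in this infinite-dimensional setting with a mix of a single differentiable inequality constraint and the convex box constraint $\Qad(0,1)$; once the continuous Fr\'echet differentiability of $g$ and the Slater-type qualification are in place, everything else reduces to substituting the adjoint representation into the Lagrangian derivatives and invoking the boundedness of $\Qad$.
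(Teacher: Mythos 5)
Your proof is correct and follows the same overall strategy as the paper: derive the qualified KKT system~\eqref{eq:optimalityCondLagrange} from the linearized Slater condition, then split into the $\nu$- and $q$-components and substitute the adjoint representation~\eqref{eq:terminal_constraint_adjoint}. The differences are in two technical choices. For multiplier existence the paper invokes exact penalization of~\eqref{Pt} via \cite[Theorem~2.87, Proposition~3.111]{Bonnans2000}, whereas you verify the Zowe--Kurcyusz regularity condition directly from the Slater direction $(\gamma,0)$; both are standard and equally valid here, since a single scalar inequality constraint plus the convex set $\Qad(0,1)$ makes the surjectivity-type condition immediate. For positivity of $\bar{\mu}$ the paper argues by contradiction ($\bar{\mu}=0$ forces $\bar{z}=0$, which is incompatible with~\eqref{eq:opt_cond_hamiltonianConstant} because the remaining integrand is at least $1$), while you rearrange $\partial_\nu\mathcal{L}=0$ into the identity $\bar{\mu}\,\bar{\eta}=\partial_\nu j(\bar{\nu},\bar{q})\in[1,c]$. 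Your version is slightly more informative: it yields the lower bound $\bar{\mu}\geq 1/\bar{\eta}$ (which the paper only records later, in the proof of \cref{prop:qualifedOptCndsIffCQ}) and delivers the upper bound $c/\bar{\eta}$ in the same stroke, using only boundedness of $\Qad$. No gaps.
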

\begin{proof}
	We first note that the linearized Slater condition allows for exact penalization of \eqref{Pt}; see 
	\cite[Theorem~2.87, Proposition~3.111]{Bonnans2000}.
	The optimality conditions now follow as in the proof of \cite[Theorem~4.12]{Bonifacius2017}.
        The condition~\eqref{eq:opt_cond_hamiltonianConstant} is equivalent to
        \(\partial_\nu \mathcal{L}(\bar{\nu},\bar{q},\bar{\mu}) = 0\)
        and~\eqref{eq:opt_cond_variationalInequality} arises
        from~\eqref{eq:optimalityCondLagrange} for \(\delta{\nu} = 0\).
	Note that \(\bar{\mu} = 0\) implies \(\bar{z} = 0\), which
        contradicts~\eqref{eq:opt_cond_hamiltonianConstant}.
        Thus \(\bar{\mu} > 0\) must hold.
\end{proof}

The optimality condition for the free end time~\eqref{eq:opt_cond_hamiltonianConstant}
allows to prove equivalence of qualified optimality conditions and
condition~\eqref{eq:definition_linearized_slater_condition}.
\begin{proposition}\label{prop:qualifedOptCndsIffCQ}
	The qualified first order optimality conditions of \cref{lemma:first_order_optcond} hold
	if and only if~\eqref{eq:definition_linearized_slater_condition} is valid.
\end{proposition}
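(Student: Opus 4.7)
The proposition asserts an equivalence, and one direction is already contained in \cref{lemma:first_order_optcond}: the linearized Slater condition~\eqref{eq:definition_linearized_slater_condition}, together with the exact penalization argument invoked there, yields the qualified first order conditions. The remaining and substantive task is the converse: if the qualified first order conditions hold for \((\bar{\nu},\bar{q})\) with some multiplier \(\bar{\mu}\ge 0\), then \(\bar{\eta} = -\partial_\nu g(\bar{\nu},\bar{q}) > 0\).

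My plan for this converse is to exploit the stationarity of the Lagrangian in the free-time variable. Since \(\bar{\nu} > 0\), stationarity of \(\mathcal{L}(\cdot,\bar{q},\bar{\mu})\) at \(\bar{\nu}\) is an equality,
\[
\partial_\nu j(\bar{\nu},\bar{q}) + \bar{\mu}\,\partial_\nu g(\bar{\nu},\bar{q}) = 0\mbox{,}
\]
which, after inserting the adjoint representation~\eqref{eq:terminal_constraint_adjoint}, is precisely the identity~\eqref{eq:opt_cond_hamiltonianConstant}. The key observation is that differentiating \(j(\nu,q) = \nu\int_0^1 (1+\tfrac{\alpha}{2}\norm{q(t)}_{\LcontrolSpatial}^2)\D{t}\) in \(\nu\) gives
\[
\partial_\nu j(\bar{\nu},\bar{q}) = \int_0^1 \Bigl(1 + \frac{\alpha}{2}\norm{\bar{q}(t)}_{\LcontrolSpatial}^2\Bigr)\D{t} \geq 1 > 0\mbox{,}
\]
which is strictly positive independently of \(\bar{q}\). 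Combined with \(\bar{\mu}\ge 0\), the stationarity identity forces \(\bar{\mu} > 0\) and \(\partial_\nu g(\bar{\nu},\bar{q}) < 0\), which is exactly \(\bar{\eta} > 0\).

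I do not anticipate any real obstacle; the argument is a one-line algebraic consequence of the free-time stationarity and the strict positivity of \(\partial_\nu j\). The only conceptual point worth emphasizing is that the nontriviality \(\bar{\mu} > 0\) of the multiplier is an output of this argument rather than an assumption, and it reflects a structural feature of the time-optimal formulation: the cost is strictly monotone in \(\nu\), so any KKT multiplier associated with the terminal constraint must be active in order to cancel the drive toward smaller time horizons.
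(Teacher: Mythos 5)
Your proposal is correct and follows essentially the same route as the paper: the forward implication is delegated to \cref{lemma:first_order_optcond}, and the converse uses the stationarity of the Lagrangian in $\nu$ together with $\partial_\nu j(\bar{\nu},\bar{q}) \geq 1 > 0$ to force $\bar{\mu} > 0$ and $\partial_\nu g(\bar{\nu},\bar{q}) \leq -1/\bar{\mu} < 0$. The paper records the same conclusion in the quantitative form $\bar{\eta} \geq 1/\bar{\mu}$, but the argument is identical.
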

\begin{proof}
  Assume the first order conditions to hold.
  According to~\eqref{eq:optimalityCondLagrange} we have
  \begin{align*}
    \bar{\mu}\,\partial_\nu g(\bar{\nu},\bar{q})
    & = - \partial_\nu j(\bar{\nu},\bar{q})
    = - \int_0^1\left(1+\frac{\alpha}{2}\norm{\bar{q}(t)}^2\right)\D{t}
      \leq - 1\mbox{.}
  \end{align*}
  Hence, condition~\eqref{eq:definition_linearized_slater_condition} holds with
  \(\bar{\eta} \geq 1/\bar{\mu} > 0\). The remaining
  implication is the assertion of \cref{lemma:first_order_optcond}.
\end{proof}

Using $\bar{\nu} > 0$, we derive from~\eqref{eq:opt_cond_variationalInequality} the usual projection formula:
\begin{equation}\label{eq:projectionFormulaControl}
\bar{q} = \ProjQad{-\frac{1}{\alpha}\ControlOp^*\bar{z}}\mbox{,}
\end{equation}
where $\ProjQad{\cdot}$ denotes the pointwise projection onto the set $\Qad$, defined by
\begin{equation*}
\ProjectionQad \colon \Q(0,1) \rightarrow \Qad(0,1)\mbox{,}\quad
\ProjectionQad(r)(t,x) = \max \left\{q_a, \min\left\{q_b, r(t,x)\right\}\right\}\mbox{.}
\end{equation*}
In particular, it holds (almost everywhere) in $I\times\omega$ that:
\begin{equation}\label{eq:optimalitySignConditionControlPlusAdjoint}
\begin{cases}
\bar{q}(t, x) = q_a &\mbox{if~} \alpha \bar{q}(t,x)+\ControlOp^*\bar{z}(t,x) > 0\mbox{,}\\
\bar{q}(t, x) = q_b &\mbox{if~} \alpha \bar{q}(t,x)+\ControlOp^*\bar{z}(t,x) < 0\mbox{.}\\
\end{cases}
\end{equation}
From this, we obtain additional regularity, which will be used for the error estimates.
\begin{proposition}\label{prop:regularityOptimalSolution}
	The optimal state $\bar{u}$ and the adjoint state
	$\bar{z}$ to~\eqref{Pt} exhibit the improved regularity               
	\begin{equation*}
	\bar{u}, \bar{z} \in \MPRHilbert{I}{L^2}{H^2\cap H_0^1} \embedding C([0,1]; H_0^1)\mbox{.}
	\end{equation*}
	Additionally, in case of distributed control we have
	\[
	\bar{q} \in \MPRHilbert{I}{L^2(\omega)}{H^1(\omega)}\mbox{.}
	\]
\end{proposition}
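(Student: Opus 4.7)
The plan is to first establish the improved regularity of the state $\bar{u}$ via maximal parabolic regularity, then use this at the terminal time to bootstrap the regularity of the adjoint $\bar{z}$, and finally transfer this back to $\bar{q}$ via the projection formula~\eqref{eq:projectionFormulaControl} together with a Stampacchia-type argument.

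\textbf{Step 1 (Regularity of $\bar{u}$).} Since $\Omega$ is convex polygonal/polyhedral, the Dirichlet Laplacian has full $H^2$-regularity, i.e., $-\Delta\colon H^2\cap H_0^1 \to L^2$ is an isomorphism. As $u_0 \in H_0^1$ by \cref{assumption:Omega} and $\bar{q} \in \Qad(0,1) \subset L^\infty(I\times\omega)$, the right-hand side $\bar{\nu}B\bar{q}$ lies in $L^2(I;L^2(\Omega))$. Standard maximal parabolic regularity results for the heat equation (see, e.g., \cite{Dautray1992}) then yield
\[
\bar{u} \in \MPRHilbert{I}{L^2}{H^2\cap H_0^1}.
\]
By the Lions-Magenes embedding $\MPRHilbert{I}{L^2}{H^2\cap H_0^1} \embedding C([0,1];H_0^1)$, in particular $\bar{u}(1) \in H_0^1$.

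\textbf{Step 2 (Regularity of $\bar{z}$).} By \cref{lemma:first_order_optcond} the adjoint state $\bar{z}$ satisfies the backward heat equation \eqref{eq:adjoint_state_equation} with terminal datum $\bar{z}(1) = \bar{\mu}(\bar{u}(1)-u_d)$. Since $u_d \in H_0^1$ by \cref{assumption:terminal_constraint} and $\bar{u}(1) \in H_0^1$ from Step 1, the terminal value lies in $H_0^1$. Reversing time, the resulting forward heat equation has homogeneous right-hand side and $H_0^1$-initial data, so the same maximal regularity argument gives
\[
\bar{z} \in \MPRHilbert{I}{L^2}{H^2\cap H_0^1} \embedding C([0,1];H_0^1).
\]

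\textbf{Step 3 (Regularity of $\bar{q}$ in the distributed case).} From \eqref{eq:projectionFormulaControl}, $\bar{q} = \ProjQad{-\alpha^{-1}\ControlOp^*\bar{z}}$ with $\ControlOp^*\bar{z}$ the restriction of $\bar{z}$ to $\omega$. By Step 2,
\[
\ControlOp^*\bar{z} \in \MPRHilbert{I}{L^2(\omega)}{H^1(\omega)}.
\]
The pointwise truncation $\ProjectionQad$ is globally Lipschitz on $\R$ with constant $1$. The classical Stampacchia-type result for Sobolev functions then ensures that truncation preserves $H^1(\omega)$ regularity with norm not increased. Applied for a.e.\ $t\in I$ one gets $\bar{q}\in L^2(I;H^1(\omega))$; applied in the time variable (using the characterization of $H^1(I;L^2(\omega))$ via finite-difference quotients together with the $L^2(\omega)$-Lipschitz continuity of $\ProjectionQad$) one gets $\bar{q}\in H^1(I;L^2(\omega))$.

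\textbf{Main obstacle.} The only nontrivial point is the stability of the pointwise truncation $\ProjectionQad$ in the anisotropic Bochner-Sobolev space $\MPRHilbert{I}{L^2(\omega)}{H^1(\omega)}$. In space the argument is the standard Stampacchia lemma; in time one exploits that $\ProjectionQad$ is nonexpansive in $L^2(\omega)$ so that finite differences in time of $\ProjectionQad(v)$ are controlled by those of $v$, yielding the $H^1(I;L^2(\omega))$-bound. Once this stability is granted, all remaining steps are routine applications of maximal parabolic regularity.
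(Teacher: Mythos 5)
Your proposal is correct and follows essentially the same route as the paper, which likewise invokes convexity of $\Omega$ for $\mathcal{D}_{L^2}(-\Lap)=H^2\cap H^1_0$, standard improved regularity for the heat equation (state and adjoint), and the projection formula~\eqref{eq:projectionFormulaControl} for the control; you merely spell out the details (in particular the Stampacchia/nonexpansiveness argument for the truncation) that the paper leaves to the cited references.
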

\begin{proof}
	Since $\Omega$ is convex, elliptic regularity yields $\dom{L^2}{-\Lap} = H^2\cap H^{1}_0$;
	see, e.g., \cite[Theorem~3.2.1.2]{Grisvard1985}.
	Hence, the assertion follows from standard regularity theory for 
	the heat equation (see, e.g., \cite[Theorem~7.1.5]{Evans2010})
	and the projection formula~\eqref{eq:projectionFormulaControl}.	
\end{proof}

Finally, we would like to highlight a concrete situation where the optimality conditions
\eqref{eq:opt_cond_hamiltonianConstant}--\eqref{eq:opt_cond_feasiblity} (equivalently
\cref{assumption:linearized_slater}) are guaranteed to hold.
\begin{theorem}
Assume that for the given \(u_d\) and \(\delta_0\) there is a control \(\breve{q} \in \Qad\),
such that \eqref{eq:qualified_opt_sufficient_Laplace} holds. Then,
\cref{assumption:linearized_slater} holds with \(\bar{\eta} \geq
\eta_{\min}(\delta_0,u_d,\Qad)\) for any optimal solution \((\bar{\nu},\bar{q})\).
\end{theorem}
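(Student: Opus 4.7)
The plan is to exploit the identity
\begin{equation*}
\bar\mu\,\bar\eta = \int_0^1\!\bigl(1+(\alpha/2)\norm{\bar q(t)}_{\LcontrolSpatial}^2\bigr)\D{t} \in [1,C_0]\mbox{,}
\end{equation*}
with $C_0 \ldef 1 + (\alpha/2)\max(q_a^2,q_b^2)\abs{\omega}$, which follows from $\partial_\nu\mathcal{L}(\bar\chi,\bar\mu)=0$ as already observed in the proof of \cref{prop:qualifedOptCndsIffCQ}. Thus the claim reduces to producing an upper bound $\bar\mu \leq \bar\mu_{\max}$ depending only on $\delta_0$, $u_d$, and $\Qad$, from which $\eta_{\min} \ldef 1/\bar\mu_{\max}$ works. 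Existence of a finite multiplier in the first place is obtained by passing through the Fritz--John form of optimality (which does not require~\cref{assumption:linearized_slater}) and ruling out the singular case $\bar\mu_0=0$ via the Slater construction below.

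To bound \(\bar\mu\), I would first invoke~\cite[Lemma~3.9, Proposition~5.3]{Bonifacius2017}: assumption~\eqref{eq:qualified_opt_sufficient_Laplace} produces a time $T^\ast > 0$ and a margin $\varepsilon_0 > 0$, both depending only on the data, such that the constant-in-time control $q\equiv\breve q$ steered over $(0,T^\ast)$ satisfies $G(u^{\breve q}(T^\ast)) \leq -\varepsilon_0$. In particular, $(T^\ast,\breve q)$ is feasible for~\eqref{Pt}, so optimality of $(\bar\nu,\bar q)$ yields the a priori bound $\bar\nu \leq j(T^\ast,\breve q) \rdef \nu_{\max}$. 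Testing~\eqref{eq:optimalityCondLagrange} with the admissible direction $(\delta\nu,\delta q) \ldef (T^\ast-\bar\nu,\breve q-\bar q)$ and using $\partial_\nu\mathcal{L}(\bar\chi,\bar\mu)=0$, I obtain
\begin{equation*}
\bar\mu\bigl(-g'(\bar\chi)(\delta\nu,\delta q)\bigr) \leq \partial_{(\nu,q)} j(\bar\chi)(\delta\nu,\delta q) \rdef C_j\mbox{,}
\end{equation*}
where $C_j$ is controlled by data using $\bar\nu\leq\nu_{\max}$ and $\bar q,\breve q\in\Qad$.

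The main technical obstacle is then to derive a data-dependent lower bound $-g'(\bar\chi)(\delta\nu,\delta q) \geq \tilde\varepsilon_0 > 0$ from the primal strict-feasibility gap $g(T^\ast,\breve q) \leq -\varepsilon_0$. I would proceed via a second-order Taylor expansion of $g$ along the segment from $\bar\chi$ to $(T^\ast,\breve q)$, with the remainder controlled by $c\prodnorm{T^\ast-\bar\nu,\breve q-\bar q}^2$ thanks to~\cref{prop:stability_g} and $\bar\nu\leq\nu_{\max}$. Because $g$ is not convex in $\nu$, this remainder has indefinite sign and direct linearization succeeds only when it is dominated by $\varepsilon_0$; otherwise I would refine the Slater construction by restricting to the segment $\chi_s \ldef \bar\chi + s((T^\ast,\breve q) - \bar\chi)$ for $s\in(0,1]$, which is admissible by convexity of $\Rplus\times\Qad(0,1)$, and combining the intermediate value theorem along this segment with $g(\chi_1) \leq -\varepsilon_0$ to locate a point whose linearization at $\bar\chi$ dominates its second-order remainder. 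Putting these estimates together yields $\bar\mu \leq C_j/\tilde\varepsilon_0 \rdef \bar\mu_{\max}$ and hence the claim with $\eta_{\min} \ldef 1/\bar\mu_{\max}$.
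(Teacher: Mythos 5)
Your reduction of the theorem to a uniform upper bound on the multiplier is exactly the second half of the paper's argument: the identity $\bar{\mu}\,\bar{\eta} = \partial_\nu j(\bar{\nu},\bar{q}) \in [1,C_0]$ is the computation from the proof of \cref{prop:qualifedOptCndsIffCQ}, and the paper likewise concludes $\bar{\eta}\geq 1/\bar{\mu}$. Where the two proofs part ways is the first half. The paper does not derive the uniform bound $\bar{\mu}\leq\bar{\mu}_{\max}(\delta_0,u_d,\Qad)$ from scratch; it imports it from \cite{Bonifacius2017} (Theorem~4.12, Proposition~5.3), where the specific structure of \eqref{eq:qualified_opt_sufficient_Laplace} is exploited. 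Your self-contained replacement stalls at precisely the step you flag, and the proposed rescue does not close it.

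Concretely, set $\hat{\chi}=(T^\ast,\breve{q})$ and $\phi(s)=g(\bar{\chi}+s(\hat{\chi}-\bar{\chi}))$. The multiplier estimate obtained from \eqref{eq:optimalityCondLagrange} requires a \emph{linearized} margin $\phi'(0)=g'(\bar{\chi})(\hat{\chi}-\bar{\chi})\leq-\tilde{\varepsilon}_0<0$, but all you have is the \emph{primal} margin $\phi(1)\leq-\varepsilon_0$ together with $\phi(0)=0$. Taylor expansion gives $\phi'(0)\leq-\varepsilon_0+\tfrac{1}{2}\sup_{s}\abs{\phi''(s)}$, and by \cref{prop:stability_g} the curvature term is of size $c\prodnorm{T^\ast-\bar{\nu},\breve{q}-\bar{q}}^2$ --- an $O(1)$ quantity with no reason to be dominated by $\varepsilon_0$. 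The intermediate/mean value theorem along the segment locates an $s\in(0,1)$ with $\phi'(s)\leq-\varepsilon_0$, but that is $g'(\chi_s)(\hat{\chi}-\bar{\chi})$ evaluated at the intermediate point $\chi_s$, not at $\bar{\chi}$, so it yields neither \cref{assumption:linearized_slater} nor the multiplier bound; and restricting to small $s$ is no better, since for $s\in(0,1)$ you have no lower bound on $-\phi(s)$. The same obstruction reappears in your preliminary step: ruling out the singular Fritz--John case ($\bar{\mu}_0=0$) amounts to exhibiting an admissible direction in which $g'(\bar{\chi})$ is strictly negative, which is again the linearized, not the primal, condition. So the actual content of the theorem --- a multiplier bound uniform over all optimal solutions --- is not established; you must either cite the result as the paper does, or give an argument that genuinely uses the structure of \eqref{eq:qualified_opt_sufficient_Laplace} (e.g., the exponential approach of the state under $q\equiv\breve{q}$ to the stationary state $(-\Lap)^{-1}B\breve{q}$ lying strictly inside the target ball), rather than a generic Taylor expansion of the nonconvex constraint functional $g$.
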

\begin{proof}
Condition~\eqref{eq:qualified_opt_sufficient_Laplace} is sufficient for qualified
optimality conditions, with multiplier \(\bar{\mu}\) bounded uniformly
only in terms of \((\delta_0,u_d,\Qad)\); see~\cite[Theorem~4.12, Proposition~5.3]{Bonifacius2017}.
Moreover, as in the proof of~\cref{prop:qualifedOptCndsIffCQ} it can be verified that for
any optimal solution it holds \(\bar{\eta} \geq 1/\bar{\mu}\), which is uniformly bounded
from below.
\end{proof}

\subsection{Second order optimality conditions}
Since~\eqref{Pt} is a nonconvex optimization problem, first order optimality conditions are not sufficient for optimality. We therefore discuss second order optimality conditions employing a cone of critical directions, introduced as
\begin{equation*}
\CriticalCone = \left\{(\delta\nu, \delta q) \in \R\times \Lcontrol{2} \,\left| 
\begin{aligned}
\delta q \mbox{~satisfies the sign condition~\eqref{eq:secondOrderSignCondition}, and}\\
g'(\bar{\nu},\bar{q})(\delta\nu, \delta q) = 0
\end{aligned}\right.
\right\}\mbox{,}
\end{equation*}
where the sign condition is given by
\begin{equation}
\delta q(t,x) \left\{
\begin{aligned}
\leq 0 &\mbox{~if~} \bar{q}(t,x) = q_b \\
\geq 0 &\mbox{~if~} \bar{q}(t,x) = q_a\\
= 0 &\mbox{~if~} \alpha\bar{q}(t,x)+\ControlOp^*\bar{z}(t,x) \neq 0
\end{aligned}\right\}\quad \mbox{a.e. in~}I\times\omega\mbox{.}\label{eq:secondOrderSignCondition}
\end{equation}
With this definition, we can formulate second order necessary conditions, which
hold in any locally optimal stationary point.
\begin{theorem}\label{thm:secondOrderNecessary}
	Let $(\bar{\nu},\bar{q}) \in \Rplus\times\Qad(0,1)$ be a local minimum of~\eqref{Pt} and $\bar{\mu} > 0$ satisfying first order optimality conditions of \cref{lemma:first_order_optcond}. Then
	\begin{equation*}
	\partial_{(\nu,q)}^2\mathcal{L}(\bar{\nu},\bar{q},\bar{\mu})[\delta\nu, \delta q]^2\geq 0 \quad\text{for all }(\delta\nu, \delta q) \in \CriticalCone\mbox{.}
	\end{equation*}
\end{theorem}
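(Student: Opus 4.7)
I plan to prove this necessary condition via a standard feasible-perturbation argument: for each critical direction $(\delta\nu, \delta q) \in \CriticalCone$ I construct a one-parameter family $\tau \mapsto (\nu_\tau, q_\tau)$ of feasible points emanating from $(\bar\nu, \bar q)$ along $(\delta\nu, \delta q)$, with the equality constraint $g = 0$ preserved, and then combine local optimality with a second-order Taylor expansion of the Lagrangian.

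The chief technical obstacle is that the sign condition~\eqref{eq:secondOrderSignCondition} only guarantees $\bar q + \tau \delta q \in \Qad(0,1)$ for small $\tau > 0$ when $\delta q$ is essentially bounded, while the critical cone merely requires $\delta q \in \Lcontrol{2}$. I therefore first approximate $(\delta\nu, \delta q)$ by directions in $\CriticalCone$ with bounded control perturbation: pointwise truncation $\delta q_n \ldef \max\set{-n, \min\set{n, \delta q}}$ preserves both parts of~\eqref{eq:secondOrderSignCondition} and converges to $\delta q$ in $\Lcontrol{2}$ by dominated convergence, and I readjust the time component to $\delta\nu_n \ldef -\partial_q g(\bar\nu,\bar q)\delta q_n / \partial_\nu g(\bar\nu,\bar q)$ so that $g'(\bar\nu, \bar q)(\delta\nu_n, \delta q_n) = 0$ is maintained. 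Continuity of $\partial_q g(\bar\nu, \bar q)$ on $\Lcontrol{2}$ (\cref{prop:stability_g}) then gives $\delta\nu_n \to \delta\nu$ in $\R$.

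For each bounded $\delta q_n$, the perturbation $q_\tau^n \ldef \bar q + \tau \delta q_n$ lies in $\Qad(0,1)$ for sufficiently small $\tau > 0$, and the implicit function theorem applied to $\nu \mapsto g(\nu, q_\tau^n)$ -- applicable since $\partial_\nu g(\bar\nu, \bar q) = -\bar\eta < 0$ -- yields $\nu_\tau^n > 0$ with $g(\nu_\tau^n, q_\tau^n) = 0$ and $\nu_\tau^n = \bar\nu + \tau \delta\nu_n + O(\tau^2)$ as $\tau \to 0^+$. Local optimality together with $g = 0$ at both endpoints gives $\mathcal{L}(\nu_\tau^n, q_\tau^n, \bar\mu) = j(\nu_\tau^n, q_\tau^n) \geq j(\bar\nu, \bar q) = \mathcal{L}(\bar\nu, \bar q, \bar\mu)$. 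A second-order Taylor expansion of $\mathcal{L}$ at $(\bar\nu, \bar q)$ then reduces this to
\begin{equation*}
0 \leq \tfrac{\tau^2}{2}\,\partial_{(\nu,q)}^2\mathcal{L}(\bar\nu,\bar q,\bar\mu)[\delta\nu_n, \delta q_n]^2 + o(\tau^2),
\end{equation*}
after invoking the Hamiltonian identity~\eqref{eq:opt_cond_hamiltonianConstant} to kill $\partial_\nu \mathcal{L}(\bar\nu, \bar q, \bar\mu)\cdot(\nu_\tau^n - \bar\nu)$ and the pointwise identity $(\alpha \bar q + \ControlOp^* \bar z)\,\delta q_n = 0$ (from the vanishing clause of~\eqref{eq:secondOrderSignCondition}) to kill $\partial_q \mathcal{L}(\bar\nu, \bar q, \bar\mu)\cdot\tau \delta q_n$. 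Dividing by $\tau^2$ and sending $\tau \to 0^+$ then yields the second-order inequality along $(\delta\nu_n, \delta q_n)$.

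The final step is passing $n \to \infty$, which relies on continuity of $\partial_{(\nu,q)}^2 \mathcal{L}(\bar\nu, \bar q, \bar\mu)[\cdot,\cdot]^2$ on $\R \times \Lcontrol{2}$; the $j''$-part is an explicit continuous quadratic form by direct inspection, and continuity of $g''$ on this product space follows from~\cref{prop:stability_g}. The most delicate technical point I anticipate is the careful bookkeeping of the $O(\tau^2)$ correction in $\nu_\tau^n - \bar\nu - \tau \delta\nu_n$, which must be shown to contribute only at order $O(\tau^3)$ in the Taylor expansion of $\mathcal{L}$ -- this is precisely where the combination $\partial_\nu \mathcal{L}(\bar\nu, \bar q, \bar\mu) = 0$ and the uniform boundedness of the higher derivatives enter decisively.
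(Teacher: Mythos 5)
Your overall strategy---feasible perturbations along approximations of the critical direction, a second-order Taylor expansion of the Lagrangian, killing the first-order term via \eqref{eq:opt_cond_hamiltonianConstant} and the third clause of \eqref{eq:secondOrderSignCondition}, then a limit $n\to\infty$---is the standard one and is essentially what underlies the reference \cite{Casas2002} that the paper invokes; the paper's own proof consists only of verifying the constraint qualification there by exhibiting the direction $(-1/\bar{\eta},0)$. However, one step in your argument fails as written: the claim that, for $\delta q_n$ bounded and satisfying \eqref{eq:secondOrderSignCondition}, the perturbation $\bar{q}+\tau\delta q_n$ lies in $\Qad(0,1)$ for all sufficiently small $\tau>0$. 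The sign condition only controls $\delta q$ on the active sets $\{\bar{q}=q_a\}$ and $\{\bar{q}=q_b\}$; at points where $\bar{q}$ is strictly inside $(q_a,q_b)$ but arbitrarily close to a bound, $\delta q_n$ may push the perturbation out of $[q_a,q_b]$. Concretely, if $\bar{q}=q_b-1/m$ and $\delta q=1$ on disjoint sets $A_m$ of positive measure (admissible for a critical direction wherever $\alpha\bar{q}+\ControlOp^*\bar{z}=0$ and $\bar{q}\notin\{q_a,q_b\}$), then for every $\tau>0$ the perturbed control violates the upper bound on $A_m$ for $m>1/\tau$, so no choice of $\tau$ works; magnitude truncation alone does not help.

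The gap is repaired by the standard additional cutoff: besides truncating in magnitude, set $\delta q_n=0$ on the set where $q_a<\bar{q}<q_a+1/n$ or $q_b-1/n<\bar{q}<q_b$. This still preserves both parts of \eqref{eq:secondOrderSignCondition}, still gives $\delta q_n\to\delta q$ in $\Lcontrol{2}$ by dominated convergence (the cutoff sets decrease to a null set), and now $\bar{q}+\tau\delta q_n\in\Qad(0,1)$ for $\tau\le 1/n^2$, since wherever $\delta q_n\neq 0$ the distance of $\bar{q}$ to the relevant bound is at least $1/n$ while $\abs{\tau\delta q_n}\le\tau n$. With this correction the remaining steps of your argument---the implicit function theorem for $\nu_\tau^n$ using $\partial_\nu g(\bar{\nu},\bar{q})=-\bar{\eta}\neq 0$, the Taylor expansion with vanishing first-order term, and the passage to the limit $n\to\infty$ using continuity of the bounded quadratic form $\partial_{(\nu,q)}^2\mathcal{L}(\bar{\nu},\bar{q},\bar{\mu})$ under strong convergence (cf.\ \cref{prop:stability_g})---go through.
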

\begin{proof}
	The assertion can be proved similarly as in \cite{Casas2002}.
	According to the linearized Slater condition~\eqref{eq:definition_linearized_slater_condition}, we have 
	$g'(\bar{\nu},\bar{q})(\delta\breve{\chi}) = 1$ for \(\delta\breve{\chi} =
        (- 1/\bar{\eta}, 0)\). Hence, the regularity assumption \cite[equation~(2.1)]{Casas2002}
	is automatically satisfied in our setting.
\end{proof}
It is well-known that the condition~\eqref{thm:secondOrderNecessary} does not
suffice to derive optimal error estimates. Next, we postulate ``minimal-gap'' second order
sufficient conditions, which result from replacing the inequality
in~\eqref{thm:secondOrderNecessary} by a strict inequality.
\begin{theorem}\label{thm:secondOrderSufficient}
	Suppose $(\bar{\nu},\bar{q}) \in \Rplus\times\Qad(0,1)$ and $\bar{\mu} > 0$ satisfy the first order necessary condition of \cref{lemma:first_order_optcond} as well as the second order sufficient condition
	\begin{equation}\label{eq:secondOrderSufficientCondition}
	\partial_{(\nu,q)}^2\mathcal{L}(\bar{\nu},\bar{q},\bar{\mu})[\delta\nu, \delta q]^2 > 0\quad\text{for all }(\delta\nu, \delta q) \in \CriticalCone\setminus\{(0,0)\}\mbox{.}
	\end{equation}
	Then there exist $\varepsilon > 0$ and $\kappa > 0$ such that for every admissible pair $(\nu, q) \in \Rplus\times\Qad(0,1)$ the quadratic growth condition
	\begin{equation}\label{eq:quadraticGrowthCondition}
	j(\bar{\nu},\bar{q}) + \frac{\kappa}{2}\abs{\nu-\bar{\nu}}^2 + \frac{\kappa}{2}\norm{q-\bar{q}}_{\Lcontrol{2}}^2 \leq j(\nu,q)\mbox{,}
	\end{equation}
	is satisfied if $\abs{\nu-\bar{\nu}} + \norm{q-\bar{q}}_{\Lcontrol{2}} \leq \varepsilon$.
\end{theorem}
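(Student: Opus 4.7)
The plan is a contradiction argument via normalized sequences, following the strategy of \cite{Casas2012d,Casas2002}. Assume quadratic growth fails: there exist $\kappa_n \downarrow 0$ and feasible $(\nu_n,q_n) \to (\bar\nu,\bar q)$ with $j(\nu_n,q_n) < j(\bar\nu,\bar q) + (\kappa_n/2)\rho_n^2$, where $\rho_n = \prodnorm{\nu_n-\bar\nu, q_n-\bar q}$. Normalize $(\delta\nu_n,\delta q_n) = (\nu_n-\bar\nu,q_n-\bar q)/\rho_n$ to unit norm and extract a subsequence with $\delta\nu_n \to \delta\nu$ in $\R$ and $\delta q_n \rightharpoonup \delta q$ weakly in $\Lcontrol{2}$. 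The goal is to show $(\delta\nu,\delta q) \in \CriticalCone$, then derive a contradiction with~\eqref{eq:secondOrderSufficientCondition} or with the unit-norm constraint.

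Since $\bar\mu\geq 0$ and $g(\nu_n,q_n)\leq 0 = g(\bar\nu,\bar q)$, a second-order Taylor expansion of $\mathcal{L}(\cdot,\bar\mu)$ combined with the variational inequality $\partial_{(\nu,q)}\mathcal{L}(\bar\nu,\bar q,\bar\mu)(\nu_n-\bar\nu,q_n-\bar q) \geq 0$ from~\eqref{eq:opt_cond_variationalInequality} yields, after dividing by $\rho_n^2$,
\begin{equation*}
\limsup_n \partial_{(\nu,q)}^2\mathcal{L}(\bar\nu,\bar q,\bar\mu)[\delta\nu_n,\delta q_n]^2 \leq 0,
\qquad
\partial_{(\nu,q)}\mathcal{L}(\bar\nu,\bar q,\bar\mu)(\delta\nu_n,\delta q_n) \to 0.
\end{equation*}
The latter, rewritten using $\partial_\nu\mathcal{L}=0$ as $\int_0^1\bar\nu(\alpha\bar q + B^*\bar z)\delta q_n \to 0$, combined with the pointwise nonnegativity of this integrand (which follows from~\eqref{eq:optimalitySignConditionControlPlusAdjoint} and $q_n\in\Qad$), forces $\delta q = 0$ a.e.\ on the strongly active set $\{\alpha\bar q + B^*\bar z \neq 0\}$, and the remaining sign conditions on the bound-active sets pass to the weak limit directly.

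For the equality $g'(\bar\nu,\bar q)(\delta\nu,\delta q) = 0$: expanding $g(\nu_n,q_n) \leq 0$ to first order and dividing by $\rho_n$ yields $g'(\bar\nu,\bar q)(\delta\nu,\delta q) \leq 0$ in the limit, using that this functional is weakly continuous by the adjoint representation~\eqref{eq:terminal_constraint_adjoint}. For the reverse inequality, expanding only $j(\nu_n,q_n) < j(\bar\nu,\bar q) + (\kappa_n/2)\rho_n^2$ in the same way produces $\partial j(\bar\nu,\bar q)(\delta\nu,\delta q) \leq 0$; combined with $\partial_{(\nu,q)}\mathcal{L}(\bar\nu,\bar q,\bar\mu)(\delta\nu,\delta q)=0$ (weak limit of the previous display) and the relation $\partial_{(\nu,q)}\mathcal{L} = \partial j + \bar\mu \, g'$, this forces $\bar\mu \, g'(\bar\nu,\bar q)(\delta\nu,\delta q) \geq 0$, hence equality since $\bar\mu > 0$. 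Thus $(\delta\nu,\delta q) \in \CriticalCone$.

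Finally I distinguish two cases. If $(\delta\nu,\delta q)\neq(0,0)$, the SSC~\eqref{eq:secondOrderSufficientCondition} combined with weak lower semicontinuity of $\partial^2_{(\nu,q)}\mathcal{L}(\bar\nu,\bar q,\bar\mu)[\cdot,\cdot]^2$ (which decomposes into the weakly continuous cross-term, the weakly lower semicontinuous $L^2$-norm squared $\alpha\bar\nu\norm{\delta q}^2$ from $\partial^2 j$, and the weakly lower semicontinuous $\bar\mu\, g''$ contribution from \cref{corollary:weak_lower_semicontinuity_g}) contradicts $\limsup \leq 0$. If $(\delta\nu,\delta q) = (0,0)$, then $\delta\nu_n\to 0$ and $\norm{\delta q_n}_{\Lcontrol{2}}^2 \to 1$ by the unit normalization; the Tikhonov term $\alpha\bar\nu\norm{\delta q_n}^2 \to \alpha\bar\nu > 0$, while the cross-term $2\alpha\delta\nu_n\int(\bar q,\delta q_n)$ and the $\bar\mu g''$-contribution vanish in the limit (using compact embedding $W(0,1)\embeddingc L^2$ at the endpoint), so $\liminf \partial_{(\nu,q)}^2\mathcal{L}(\bar\nu,\bar q,\bar\mu)[\delta\nu_n,\delta q_n]^2 \geq \alpha\bar\nu > 0$, contradicting the bound above. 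The main technical obstacle is the identification of the weak limit as an element of the critical cone, particularly the equality $g'(\bar\nu,\bar q)(\delta\nu,\delta q)=0$, which requires bookkeeping both the $j$-only and the $\mathcal{L}$-based Taylor expansions and using complementary slackness of the positive multiplier $\bar\mu$.
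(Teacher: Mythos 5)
Your argument is correct and is essentially the proof the paper relies on: the paper's own proof is a one-line reference to \cite[Theorem~4.13]{Casas2015}, and your normalized-sequence contradiction argument --- identifying the weak limit as a critical direction via the pointwise sign structure of $\alpha\bar{q}+\ControlOp^*\bar{z}$ and complementary slackness with $\bar{\mu}>0$, then splitting into the cases $v\neq 0$ (SSC plus weak lower semicontinuity) and $v=0$ (the Tikhonov term $\alpha\bar{\nu}$ survives) --- is exactly that argument, whose steps the paper itself reuses in the proof of \cref{lemma:ImprovedErrorRate}. The one loose point is your justification that the $g''$-contribution vanishes when $(\delta\nu,\delta q)=(0,0)$: the trace $i_1\colon W(0,1)\to L^2(\Omega)$ is continuous but not compact, so you should instead either invoke the weak lower semicontinuity of $g''$ from \cref{corollary:weak_lower_semicontinuity_g} (which yields $\liminf_n g''(\bar{\nu},\bar{q})[\delta\nu_n,\delta q_n]^2\geq 0$, which is all that is needed) or use the parabolic smoothing that bounds the endpoint values of the linearized states in $H^1_0(\Omega)$.
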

\begin{proof}
	The assertion can be proved similarly as in \cite[Theorem~4.13]{Casas2015}.
\end{proof}
The second order sufficient condition~\eqref{eq:secondOrderSufficientCondition} and
the quadratic growth condition~\eqref{eq:quadraticGrowthCondition} will form the
basis of the following analysis. Last, we note that for the given objective functional,
coercivity of $\partial_{(\nu,q)}^2\mathcal{L}(\bar{\nu},\bar{q},\bar{\mu})$ is equivalent
to the seemingly weaker positivity condition~\eqref{eq:secondOrderSufficientCondition}, as
already observed for semilinear parabolic PDEs in~\cite{Casas2015}.

\begin{theorem}\label{thm:secondOrderSufficientEquivalence} Let $(\bar{\nu},\bar{q}) \in \Rplus\times\Qad$ and $\bar{\mu} > 0$. The \emph{positivity condition}~\eqref{eq:secondOrderSufficientCondition} is equivalent to
	the \emph{coercivity condition}: there exists a \(\bar{\kappa} > 0\) such that
	\begin{equation*}
	\partial_{(\nu,q)}^2\mathcal{L}(\bar{\nu},\bar{q},\bar{\mu})[\delta\nu, \delta q]^2
        \geq \bar{\kappa} \left(\abs{\delta\nu}^2 + \norm{\delta q}_{\Lcontrol{2}}^2\right)\quad\text{for all }(\delta\nu,\delta q) \in \CriticalCone\mbox{.}
	\end{equation*} 
\end{theorem}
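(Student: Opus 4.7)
The direction coercivity $\Rightarrow$ positivity is trivial (any nonzero element produces a strictly positive right-hand side). The content is in the reverse direction, which I would prove by a standard contradiction-plus-compactness argument in the style of Casas.

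Suppose positivity holds but coercivity fails. Then there exists a sequence $(\delta\nu_n,\delta q_n) \in \CriticalCone$ with $\abs{\delta\nu_n}^2 + \norm{\delta q_n}^2_{\Lcontrol{2}} = 1$ and $\partial_{(\nu,q)}^2 \mathcal{L}(\bar{\nu},\bar{q},\bar{\mu})[\delta\nu_n,\delta q_n]^2 \to 0$. Passing to a subsequence, $\delta\nu_n \to \delta\nu$ in $\R$ and $\delta q_n \rightharpoonup \delta q$ weakly in $\Lcontrol{2}$. The first step is to check that the limit still lies in $\CriticalCone$: the pointwise sign condition \eqref{eq:secondOrderSignCondition} defines a convex, weakly closed subset of $\Lcontrol{2}$, and the linear constraint $g'(\bar{\nu},\bar{q})(\delta\nu,\delta q) = 0$ is preserved by weak convergence since $g'(\bar{\nu},\bar{q})$ is bounded.

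Next, the plan is to compute the second derivative of the Lagrangian explicitly as
\begin{equation*}
\partial_{(\nu,q)}^2 \mathcal{L}(\bar{\nu},\bar{q},\bar{\mu})[\delta\nu,\delta q]^2 = 2\alpha\,\delta\nu\,(\bar{q},\delta q)_{\Lcontrol{2}} + \alpha\bar{\nu}\,\norm{\delta q}^2_{\Lcontrol{2}} + \bar{\mu}\, g''(\bar{\nu},\bar{q})[\delta\nu,\delta q]^2\mbox{.}
\end{equation*}
Using weak lower semicontinuity of $\norm{\cdot}^2_{\Lcontrol{2}}$, the $\liminf$-estimate on $g''$ from \cref{corollary:weak_lower_semicontinuity_g} (applicable since $\bar{\mu}>0$), and the continuity of the mixed term $\delta\nu\,(\bar{q},\delta q)_{\Lcontrol{2}}$ (strong-weak product), I obtain
\begin{equation*}
\partial_{(\nu,q)}^2 \mathcal{L}(\bar{\nu},\bar{q},\bar{\mu})[\delta\nu,\delta q]^2 \leq \liminf_{n\to\infty} \partial_{(\nu,q)}^2 \mathcal{L}(\bar{\nu},\bar{q},\bar{\mu})[\delta\nu_n,\delta q_n]^2 = 0\mbox{.}
\end{equation*}
The positivity condition~\eqref{eq:secondOrderSufficientCondition} then forces $(\delta\nu,\delta q) = (0,0)$.

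The decisive and slightly delicate final step is to rule this case out by showing that $\liminf \partial^2_{(\nu,q)}\mathcal{L}[\delta\nu_n,\delta q_n]^2 \geq \alpha\bar{\nu} > 0$, which yields the desired contradiction. The first term above tends to zero since $\delta\nu_n \to 0$. The second term tends to $\alpha\bar{\nu}$, because $\delta\nu_n\to 0$ together with the normalization forces $\norm{\delta q_n}^2_{\Lcontrol{2}} \to 1$. For the remaining $\bar{\mu}\,g''$-term, I use the representation from \cref{proposition:derivatives_g},
\begin{equation*}
g''(\bar{\nu},\bar{q})[\delta\nu_n,\delta q_n]^2 = \norm{\delta u_n(1)}^2_{L^2} + \inner{\bar{u}(1)-u_d,\delta\tilde{u}_n(1)}_{L^2}\mbox{,}
\end{equation*}
with $\delta u_n = S'(\bar\nu,\bar q)(\delta\nu_n,\delta q_n)$ and $\delta\tilde{u}_n = S''(\bar\nu,\bar q)[\delta\nu_n,\delta q_n]^2$. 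The first summand is nonnegative and so harmless in a $\liminf$. For the second summand, the main observation is that the source term in the equation for $\delta\tilde{u}_n$ in \cref{lemma:control_to_state_differentiable} has the prefactor $\delta\nu_n \to 0$ multiplying quantities that remain bounded in natural parabolic norms; maximal parabolic regularity then gives $\delta\tilde{u}_n \to 0$ strongly in $W(0,1)$, and the continuous trace $i_1$ yields $\delta\tilde{u}_n(1) \to 0$ in $L^2(\Omega)$. Hence the cross term vanishes in the limit, and combining everything produces the required $\liminf \geq \alpha\bar{\nu}$. The only nonroutine point in the proof is verifying this strong convergence of $\delta\tilde{u}_n$, which hinges crucially on $\delta\nu_n \to 0$ (not merely boundedness).
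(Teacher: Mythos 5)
Your proof is correct and is essentially the argument the paper has in mind: the paper does not write it out but defers to \cite[Theorem~4.11]{Casas2015} together with \cref{corollary:weak_lower_semicontinuity_g}, and that reference carries out exactly your contradiction/weak-compactness scheme, with the final contradiction coming from the $\alpha\bar{\nu}\norm{\delta q_n}^2$-term and the vanishing of the $\delta\tilde{u}_n$-contribution (for which the energy estimates of \cref{prop:stabilityS} already suffice; maximal parabolic regularity is not needed). The only cosmetic slip is that failure of coercivity gives $\limsup_n \partial^2_{(\nu,q)}\mathcal{L}[\delta\nu_n,\delta q_n]^2 \leq 0$ rather than convergence to $0$, but your lower bound $\liminf \geq \alpha\bar{\nu} > 0$ contradicts that just as well.
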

\begin{proof}
	This result can be proved along the lines of the proof of \cite[Theorem~4.11]{Casas2015}, where we in particular use \cref{corollary:weak_lower_semicontinuity_g}.
\end{proof}

\subsection{Characterization of the SSC}
\label{sec:scalar_ssc}
In general it seems to be difficult to verify whether a second order sufficient
optimality condition is satisfied for a given a problem -- both theoretically and
numerically.
However, for the problem under consideration here, we will provide a
scalar condition that is equivalent to
the second order sufficient optimality condition of~\cref{thm:secondOrderSufficient};
cf.\ \cite{Ito2010}
for a similar approach for time-optimal control of ODEs.	
The idea leads to a
simple test of the SSC based on the solution of one linear-quadratic auxiliary problem.

In order to keep the presentation of this section simple, we impose additional assumptions, which will
be fulfilled in most situations.
First, if the critical cone is trivial, i.e.\ $\CriticalCone = \set{0}$, the
condition~\eqref{eq:secondOrderSufficientCondition} is vacuously true.
Note that this case corresponds to a bang-bang control,
which can occur only if the control assumes either only the lower or upper bound on each
connected component of \(\omega\) (taking into account the projection
formula~\eqref{eq:projectionFormulaControl}).
Similarly, to avoid other degenerate cases, we impose the additional assumption: 
\begin{assumption}\label{assumption:non_degenerate}
We assume that the critical cone $\CriticalCone$ is a linear space that
contains elements of the form \((\delta{\nu},\delta{q})\) with \(\delta{\nu}\neq 0\).
\end{assumption}
\begin{remark}\label{remark:assumption_non_degenerate}
  \cref{assumption:non_degenerate} is equivalent to a strict complementarity condition and
  a non-triviality condition, given concretely by
  \begin{align*}
    \abs{\set{(t,x) \in I\times\omega \constraintSet  \bar{q}(t,x) \in \set{q_a, q_b},\,
    \alpha\bar{q}(t,x) + \ControlOp^*\bar{z}(t,x) = 0}} &= 0,\\
    \abs{\set{(t,x) \in I\times\omega \constraintSet   q_a < \bar{q}(t,x) < q_b,\,
    \ControlOp^*\bar{z}(t,x) \neq 0}} &> 0,
  \end{align*}
  where $\abs{\cdot}$ denotes the product-measure associated with $I\times\omega$.

  We note that it is possible to show that these assumptions are already equivalent to
  $\CriticalCone \neq \set{0}$, either in the setting of a distributed control, or under an
  approximate controllability assumption on \((-\Lap,B)\).
\end{remark}
     
If \cref{assumption:non_degenerate} holds, the critical cone consists exactly of the
elements \((\delta{\nu}, \delta{q})\) with \(\delta{\nu} \in \R\), \(\delta{q} \in
C_{\bar{q}}\), and \(\partial_q g(\bar{\nu},\bar{q})\delta{q} +
\partial_{\nu}g(\bar{\nu},\bar{q})\delta{\nu} = 0\), where 
\[
C_{\bar{q}} \ldef \set{\delta q \in \Lcontrol{2} \constraintSet  \delta q(t,x) = 0 \text{ if } \alpha\bar{q}(t,x)+\ControlOp^*\bar{z}(t,x) \neq 0}.
\]
For ease of presentation, we sometimes abbreviate the arguments $(\bar{\nu},\bar{q})$
and simply write $\bar{\chi}$ in the following.

\begin{lemma}\label{lemma:ssc_reduces_to_scalar_condition}
  Let $(\bar{\nu},\bar{q}) \in \Rplus\times\Qad(0,1)$
  and assume that \cref{assumption:non_degenerate} holds.
  The second order sufficient optimality condition of
  \cref{thm:secondOrderSufficient} is equivalent to
  \begin{equation}\label{eq:ssc_schur_scalar_condition}
    \bar{\gamma} \ldef \partial^2_{(\nu,q)} \mathcal{L}(\bar{\nu},\bar{q},\bar{\mu})[1,\delta\bar{q}]^2 > 0,
  \end{equation}
  where $(\delta\bar{q},\delta\bar{\mu}) \in C_{\bar{q}}\times\R$ is the unique solution
  of the linear system
  \begin{equation}
    \label{eq:ssc_reduces_to_scalar_condition_system}
    \begin{aligned}
      \partial^2_{q}\mathcal{L}(\bar{\nu},\bar{q},\bar{\mu})[\delta\bar{q},\delta q]
      + \delta\bar{\mu} \, \partial_q g(\bar{\nu},\bar{q})\delta q &= 
      -\partial_{\nu}\partial_q \mathcal{L}(\bar{\nu},\bar{q},\bar{\mu})[1,\delta q],\quad \delta q \in C_{\bar{q}},\\
      \partial_q g(\bar{\nu},\bar{q})\delta\bar{q}
      &= - \partial_{\nu} g(\bar{\nu},\bar{q}).
    \end{aligned}
  \end{equation}
\end{lemma}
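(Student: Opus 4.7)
The plan is to perform a Schur-complement reduction of the quadratic form $\partial_{(\nu,q)}^2\mathcal{L}(\bar{\chi},\bar{\mu})$ restricted to $\CriticalCone$, so that positivity is encoded in a single scalar. First, under \cref{assumption:non_degenerate} the critical cone is the linear space
\[
\CriticalCone = \set{(\delta\nu,\delta q) \in \R\times C_{\bar{q}} \constraintSet  \partial_\nu g(\bar{\chi})\,\delta\nu + \partial_q g(\bar{\chi})\,\delta q = 0}.
\]
Since it contains some $(\delta\nu_*,\delta q_*)$ with $\delta\nu_*\neq 0$ while $\partial_\nu g(\bar{\chi}) = -\bar{\eta} < 0$ by \cref{assumption:linearized_slater}, the linear functional $\partial_q g(\bar{\chi})$ is nonzero (hence surjective) on $C_{\bar{q}}$. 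Any $\delta\bar{q}\in C_{\bar{q}}$ satisfying the second equation of~\eqref{eq:ssc_reduces_to_scalar_condition_system} then has $(1,\delta\bar{q})\in\CriticalCone$ and yields the direct sum decomposition
\[
\CriticalCone = \R\cdot(1,\delta\bar{q}) \;\oplus\; (\{0\}\times K),
\qquad K \ldef \set{\delta q_0 \in C_{\bar{q}} \constraintSet  \partial_q g(\bar{\chi})\,\delta q_0 = 0}.
\]

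Next, I would establish unique solvability of the saddle-point system~\eqref{eq:ssc_reduces_to_scalar_condition_system}. Using \cref{proposition:derivatives_g} with $\delta\nu_1 = \delta\nu_2 = 0$ forces the second-order sensitivity $\delta\tilde{u}$ in \cref{lemma:control_to_state_differentiable} to vanish, so that
\[
\partial_q^2\mathcal{L}(\bar{\chi},\bar{\mu})[\delta q,\delta q]
 = \bar{\nu}\alpha\,\norm{\delta q}_{\Lcontrol{2}}^2 + \bar{\mu}\,\norm{\delta u(1)}_{L^2(\Omega)}^2
 \geq \bar{\nu}\alpha\,\norm{\delta q}_{\Lcontrol{2}}^2,
\]
where $\delta u = S'(\bar{\chi})(0,\delta q)$. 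Coercivity on $C_{\bar{q}}$ together with the just-established surjectivity of $\partial_q g(\bar{\chi})$ on $C_{\bar{q}}$ then gives existence and uniqueness of $(\delta\bar{q},\delta\bar{\mu})$ by standard saddle-point theory. The crucial algebraic consequence is an $\mathcal{L}''$-orthogonality relation: testing the first equation of~\eqref{eq:ssc_reduces_to_scalar_condition_system} with $\delta q_0\in K$ kills the $\delta\bar{\mu}$ term and produces
\[
\partial_q^2\mathcal{L}(\bar{\chi},\bar{\mu})[\delta\bar{q},\delta q_0] + \partial_\nu\partial_q\mathcal{L}(\bar{\chi},\bar{\mu})[1,\delta q_0] = 0,
\]
i.e.\ $(1,\delta\bar{q})$ is orthogonal to $\{0\}\times K$ with respect to $\partial_{(\nu,q)}^2\mathcal{L}(\bar{\chi},\bar{\mu})$.

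Finally, I would exploit this orthogonality. Any $(\delta\nu,\delta q)\in\CriticalCone$ decomposes uniquely as $\delta\nu\cdot(1,\delta\bar{q}) + (0,\delta q_0)$ with $\delta q_0 \ldef \delta q - \delta\nu\,\delta\bar{q} \in K$, and the orthogonality yields the Schur-type identity
\[
\partial_{(\nu,q)}^2\mathcal{L}(\bar{\chi},\bar{\mu})[\delta\nu,\delta q]^2
 = \abs{\delta\nu}^2\,\bar{\gamma} + \partial_q^2\mathcal{L}(\bar{\chi},\bar{\mu})[\delta q_0,\delta q_0].
\]
The second summand is bounded below by $\bar{\nu}\alpha\,\norm{\delta q_0}_{\Lcontrol{2}}^2$, which vanishes exactly when $\delta q = \delta\nu\,\delta\bar{q}$. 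Hence positivity on $\CriticalCone\setminus\set{0}$ collapses to the scalar condition $\bar{\gamma} > 0$, proving the equivalence. The main obstacle I expect lies in Step~2: one must carefully combine \cref{assumption:non_degenerate} (to produce a feasible direction in the critical cone with $\delta\nu\neq 0$) with \cref{assumption:linearized_slater} (to rule out $\partial_q g(\bar{\chi})\equiv 0$ on $C_{\bar{q}}$), and then verify that the variational identity in~\eqref{eq:ssc_reduces_to_scalar_condition_system} indeed determines $\delta\bar{q}$ inside $C_{\bar{q}}$ (so that $(1,\delta\bar{q})$ lies in $\CriticalCone$), rather than only in the kernel direction $K$.
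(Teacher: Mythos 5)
Your argument is correct and is essentially the paper's own proof written in explicit form: the paper minimizes $\partial^2_{(\nu,q)}\mathcal{L}(\bar{\chi},\bar{\mu})[1,\delta q]^2$ over the slice $\{\delta q : (1,\delta q)\in\CriticalCone\}$ and identifies~\eqref{eq:ssc_reduces_to_scalar_condition_system} as the KKT system of that convex linear--quadratic problem, which is precisely the orthogonality relation you obtain by testing the first equation with $\delta q_0\in K$, and your Schur/Pythagoras identity is just the completed-square restatement that the minimal value equals $\bar{\gamma}$. Your handling of the $\delta\nu=0$ directions via the coercivity $\partial^2_{q}\mathcal{L}(\bar{\chi},\bar{\mu})[\delta q]^2\geq\alpha\bar{\nu}\norm{\delta q}^2_{\Lcontrol{2}}$ (using $\delta\tilde{u}=0$), and your use of $\partial_{q}g(\bar{\chi})C_{\bar{q}}=\R$ as the constraint qualification ensuring solvability of the system, likewise coincide with the paper's steps.
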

\begin{proof}
Clearly, we only have to prove that \eqref{eq:ssc_schur_scalar_condition} implies the
second order sufficient optimality condition, since the other implication is obvious.
Let $(\delta\nu,\delta q) \in \CriticalCone$.
We distinguish two cases for \(\delta{\nu}\). If \(\delta{\nu} = 0\), we use the fact that
the second derivative of \(g\) with respect to \(q\) has the form
\(\partial_{q}^2g(\bar{\chi})[\delta{q}]^2 = \norm{i_1 \partial_{q}S(\bar{\chi})\delta{q}}^2_{L^2}\) to obtain
\begin{equation*}
\partial^2_{q}\mathcal{L}(\bar{\chi},\bar{\mu})[\delta{q}]^2
 \geq \partial^2_{q}j(\bar{\chi})[\delta{q}]^2 = \alpha\bar{\nu}\norm{\delta{q}}^2_{L^2(I\times\omega)},
\end{equation*}
which immediately implies~\eqref{eq:secondOrderSufficientCondition}.
Now, consider the case \(\delta{\nu} \neq 0\). Since the expression on the left
in~\eqref{eq:secondOrderSufficientCondition} is bi-linear in \(\delta{\nu}\), and the
critical cone \(\CriticalCone\) is linear, it suffices
to consider the case \(\delta{\nu} = 1\). By minimizing the expression on the left for
admissible \(\delta{q}\) (such that \((1,\delta{q}) \in \CriticalCone\)), writing out the
second derivative in terms of the partial derivatives and dropping constant terms, we
arrive at the following linear-quadratic minimization problem:
\begin{equation}\label{eq:ssc_equiv_scalar_aux_minimization}
  \inf_{\delta q \in C_{\bar{q}}}
  \frac{1}{2}\partial^2_{q}\mathcal{L}(\bar{\chi},\bar{\mu})[\delta q]^2
  + \partial_{\nu}\partial_{q}\mathcal{L}(\bar{\chi},\bar{\mu})[1,\delta q]
  \quad\text{subject to}\quad
 \partial_{q}g(\bar{\chi})\delta q = -\partial_{\nu} g(\bar{\chi}).
\end{equation}
Since $(1,\delta q) \in \CriticalCone$, we have $\partial_q g(\bar{\chi})\delta{q} = -\partial_\nu g(\bar{\chi})$.
Hence, problem~\eqref{eq:ssc_equiv_scalar_aux_minimization} has admissible points, and
we easily verify existence of a minimizer using the direct method.
Moreover, due to \cref{remark:assumption_non_degenerate} 
(or using the first order optimality condition $\partial_\nu g(\bar{\chi}) \neq 0$ and
linearity of $C_{\bar{q}}$), we have $\partial_{q}g(\bar{\chi}) C_{\bar{q}} = \R$,
which means that a constraint qualification condition (see, e.g.,~\cite{Zowe1979}) is fulfilled.
Thus, we obtain the necessary and
sufficient optimality conditions of the convex
problem~\eqref{eq:ssc_equiv_scalar_aux_minimization} in the
form~\eqref{eq:ssc_reduces_to_scalar_condition_system}.
Hence, for the positivity condition~\eqref{eq:secondOrderSufficientCondition} we only have
to require that \(\bar{\gamma} > 0\), which guarantees
\begin{equation}
\label{eq:mimimum_nu_lagrange}
\partial^2_{(\nu,q)} \mathcal{L}(\bar{\chi},\bar{\mu})[1,\delta{q}]^2
 \geq \partial^2_{(\nu,q)} \mathcal{L}(\bar{\chi},\bar{\mu})[1,\delta{\bar{q}}]^2
 = \bar{\gamma} > 0,
\end{equation}
for any \(\delta{q}\) with \((1,\delta{q}) \in \CriticalCone\),
where $\delta\bar{q}$ is the solution to~\eqref{eq:ssc_reduces_to_scalar_condition_system}.
\end{proof}

The system~\eqref{eq:ssc_reduces_to_scalar_condition_system} still involves the
solution of an infinite-dimensional linear-quadratic optimization
problem. However, the same calculation
is valid for the discrete problem, which can be used to numerically verify the SSC by computing the
constant \(\bar{\gamma}\) on the discrete level.
Note that while \(\bar{\gamma} > 0\) implies the SSC from
\cref{thm:secondOrderSufficient}, it does not represent a coercivity constant 
for the Hessian of the Lagrange function as in \cref{thm:secondOrderSufficientEquivalence}.
Instead, we can derive a lower bound on the coercivity constant in terms of
\(\bar{\gamma}\), which also depends explicitly on \(\alpha > 0\).
\begin{proposition}
\label{proposition:lower_bound_coercivity}
Let $(\bar{\nu},\bar{q}) \in \Rplus\times\Qad(0,1)$,
\cref{assumption:non_degenerate} hold, and assume
that \(\bar{\gamma} > 0\) (as defined in \cref{lemma:ssc_reduces_to_scalar_condition}).
Then, the coercivity constant from \cref{thm:secondOrderSufficientEquivalence} is bounded
from below by \(\bar{\kappa} \geq (\bar{\gamma}/3)
\min\set{\alpha\bar{\nu}/(\bar{\gamma}+c_1), 1}\), where \(c_1\) depends on the optimal solution.
\end{proposition}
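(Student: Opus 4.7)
The plan is to decompose any critical direction along the specific element $(1,\delta\bar{q})$ furnished by \cref{lemma:ssc_reduces_to_scalar_condition}, exploit that the mixed second-order term vanishes by the very construction of $\delta\bar{q}$, and then pass to the desired norms by elementary estimates.

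Under \cref{assumption:non_degenerate}, the critical cone $\CriticalCone$ is a linear subspace of $\R\times\Lcontrol{2}$. Since $\bar{\gamma}>0$, the vector $(1,\delta\bar{q})$ from~\eqref{eq:ssc_reduces_to_scalar_condition_system} lies in $\CriticalCone$ (it satisfies $\delta\bar{q}\in C_{\bar{q}}$ and $\partial_q g(\bar{\chi})\delta\bar{q}=-\partial_\nu g(\bar{\chi})$). For an arbitrary $(\delta\nu,\delta q)\in\CriticalCone$, I would decompose
\[
(\delta\nu,\delta q)=\delta\nu\,(1,\delta\bar{q})+(0,\delta q^\perp),\qquad \delta q^\perp\ldef \delta q-\delta\nu\,\delta\bar{q},
\]
and note that by linearity $(0,\delta q^\perp)\in\CriticalCone$, so in particular $\delta q^\perp\in C_{\bar{q}}$ and $\partial_q g(\bar{\chi})\delta q^\perp=0$.

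The crucial observation is that the mixed second-order term vanishes on this decomposition: testing the first equation of \eqref{eq:ssc_reduces_to_scalar_condition_system} with $\delta q^\perp\in C_{\bar{q}}$ and using $\partial_q g(\bar{\chi})\delta q^\perp=0$ gives
\[
\partial_q^2\mathcal{L}(\bar{\chi},\bar{\mu})[\delta\bar{q},\delta q^\perp]+\partial_\nu\partial_q\mathcal{L}(\bar{\chi},\bar{\mu})[1,\delta q^\perp]=0,
\]
which is precisely $\partial^2_{(\nu,q)}\mathcal{L}(\bar{\chi},\bar{\mu})[(1,\delta\bar{q}),(0,\delta q^\perp)]=0$. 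Expanding the quadratic form bilinearly and using $\partial^2_{(\nu,q)}\mathcal{L}(\bar{\chi},\bar{\mu})[1,\delta\bar{q}]^2=\bar{\gamma}$ yields the clean identity
\[
\partial^2_{(\nu,q)}\mathcal{L}(\bar{\chi},\bar{\mu})[\delta\nu,\delta q]^2=(\delta\nu)^2\bar{\gamma}+\partial_q^2\mathcal{L}(\bar{\chi},\bar{\mu})[\delta q^\perp]^2.
\]
Since $\partial_q^2 j(\bar{\chi})[\delta q^\perp]^2=\alpha\bar{\nu}\norm{\delta q^\perp}^2_{L^2(I\times\omega)}$ and $\bar{\mu}\,\partial_q^2 g(\bar{\chi})[\delta q^\perp]^2=\bar{\mu}\norm{i_1\partial_q S(\bar{\chi})\delta q^\perp}^2\geq 0$ (using $\bar{\mu}>0$ and the form of $G$), one gets the lower bound $\partial_q^2\mathcal{L}(\bar{\chi},\bar{\mu})[\delta q^\perp]^2\geq\alpha\bar{\nu}\norm{\delta q^\perp}^2_{L^2}$.

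The final step is to compare $\norm{\delta q^\perp}^2$ with $\norm{\delta q}^2$ and $|\delta\nu|^2$. Setting $c_1\ldef\alpha\bar{\nu}\norm{\delta\bar{q}}^2_{L^2}$ (a quantity depending only on the optimal solution), the elementary inequality $\norm{a+b}^2\leq 2\norm{a}^2+2\norm{b}^2$ applied to $\delta q=\delta q^\perp+\delta\nu\,\delta\bar{q}$ yields $\norm{\delta q^\perp}^2\geq\tfrac{1}{2}\norm{\delta q}^2-|\delta\nu|^2\norm{\delta\bar{q}}^2$. Inserting this as a convex combination weight $\theta\in(0,1)$ into
\[
(\delta\nu)^2\bar{\gamma}+\alpha\bar{\nu}\norm{\delta q^\perp}^2
\geq \theta\,(\delta\nu)^2\bar{\gamma}+(1-\theta)\,\alpha\bar{\nu}\bigl[\tfrac{1}{2}\norm{\delta q}^2-|\delta\nu|^2\norm{\delta\bar{q}}^2\bigr],
\]
and choosing $\theta$ so as to equalize the coefficients of $|\delta\nu|^2$ and $\norm{\delta q}^2$, gives a coercivity constant of the form $\bar{\kappa}=\alpha\bar{\nu}\bar{\gamma}/(\alpha\bar{\nu}+2\bar{\gamma}+2c_1)$. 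A direct case analysis (distinguishing $\alpha\bar{\nu}\gtrless\bar{\gamma}+c_1$) confirms that this dominates the claimed lower bound $(\bar{\gamma}/3)\min\{\alpha\bar{\nu}/(\bar{\gamma}+c_1),1\}$. The conceptual heart of the argument is the cross-term cancellation; the remaining work is purely algebraic, with the only mild subtlety being the correct balancing to recover the stated explicit constant.
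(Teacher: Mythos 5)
Your proof is correct, and it reaches the stated bound by a genuinely different route than the paper. The paper works with two independent lower bounds: first, $\partial^2_{(\nu,q)}\mathcal{L}(\bar{\chi},\bar{\mu})[\delta\nu,\delta q]^2\geq\bar{\gamma}\abs{\delta\nu}^2$ on $\CriticalCone$, obtained by rescaling~\eqref{eq:mimimum_nu_lagrange}; second, $\partial^2_{(\nu,q)}\mathcal{L}(\bar{\chi},\bar{\mu})[\delta\nu,\delta q]^2\geq\tfrac{\alpha\bar{\nu}}{2}\norm{\delta q}^2-c_1\abs{\delta\nu}^2$ on \emph{all} of $\R\times\Lcontrol{2}$, by applying Young's inequality to the mixed term, with $c_1=\abs{\partial^2_{\nu}\mathcal{L}}+2\norm{\partial_{\nu}\partial_{q}\mathcal{L}}^2/(\alpha\bar{\nu})$; it then takes the convex combination with $\theta=(2/3)\bar{\gamma}/(\bar{\gamma}+c_1)$, which reproduces the stated constant exactly. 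You instead decompose a critical direction along $(1,\delta\bar{q})$ and use the optimality system~\eqref{eq:ssc_reduces_to_scalar_condition_system} to cancel the cross term exactly, obtaining the Pythagorean identity $\partial^2_{(\nu,q)}\mathcal{L}[\delta\nu,\delta q]^2=\bar{\gamma}(\delta\nu)^2+\partial^2_{q}\mathcal{L}[\delta q^\perp]^2$; the two lower bounds then fall out of this single identity, with the price that your second bound only holds on $\CriticalCone$ (which suffices) and requires verifying $(0,\delta q^\perp)\in\CriticalCone$ (which you do correctly, using that $\CriticalCone$ and $C_{\bar{q}}$ are linear under \cref{assumption:non_degenerate}). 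Your argument is arguably more structural — it makes visible why $\bar{\gamma}$ is exactly the Schur-complement curvature in the $\nu$-direction — while the paper's Young's-inequality estimate is more robust (valid off the critical cone) and expresses $c_1$ via operator norms of the mixed derivatives. The one discrepancy is that your $c_1=\alpha\bar{\nu}\norm{\delta\bar{q}}^2_{\Lcontrol{2}}$ differs from the paper's; since the proposition only specifies that $c_1$ depends on the optimal solution, this is acceptable, and your final case analysis showing $\alpha\bar{\nu}\bar{\gamma}/(\alpha\bar{\nu}+2\bar{\gamma}+2c_1)\geq(\bar{\gamma}/3)\min\set{\alpha\bar{\nu}/(\bar{\gamma}+c_1),1}$ checks out.
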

\begin{proof}
By replacing \(\delta{q}\) with \(\delta{q}/\delta{\nu}\)
in~\eqref{eq:mimimum_nu_lagrange} and using linearity we directly obtain
\[
\partial^2_{(\nu,q)} \mathcal{L}(\bar{\chi},\bar{\mu})[\delta{\nu}, \delta{q}]^2 \geq \bar{\gamma}\abs{\delta{\nu}}^2
\quad\text{for all } (\delta{\nu},\delta{q}) \in \CriticalCone.
\]
Furthermore, by using the coercivity of
\(\partial^2_{q} \mathcal{L}(\bar{\chi},\bar{\mu})\) with constant \(\alpha\bar{\nu}\) and straightforward estimates
(using Young's inequality), we can derive that
\[
\partial^2_{(\nu,q)} \mathcal{L}(\bar{\chi},\bar{\mu})[\delta{\nu}, \delta{q}]^2 
\geq \frac{\alpha\bar{\nu}}{2} \norm{\delta{q}}^2_{L^2(I\times\omega)}
- c_1 \abs{\delta{\nu}}^2
\quad\text{for all } (\delta{\nu},\delta{q}),
\]
where \(c_1 = \left(\abs{\partial^2_{\nu}\mathcal{L}(\bar{\chi},\bar{\mu})}
  + 2\norm{\partial_{\nu}\partial_{q} \mathcal{L}(\bar{\chi},\bar{\mu})}^2/ (\alpha\bar{\nu})\right) \).
By taking a convex combination of \((1-\theta)\) times the former and \(\theta\) times the latter estimate, where
\(\theta = (2/3) (\bar{\gamma}/(\bar{\gamma} + c_1)) \), we arrive at
the desired estimate.
\end{proof}

\begin{remark}
We can also give an interpretation of \(\bar{\gamma}\) in terms of a certain
value function, which is introduced as
\begin{equation}
\label{eq:value_function}
V(\nu) = \;\min_{q \in \Qad(0,1),\; g(\nu,q) \leq 0}\; j(\nu, q) =
j(\nu, \bar{q}(\nu)).
\end{equation}
Thus, \(V\) is defined by fixing an arbitrary time \(\nu > 0\) and resolving the resulting
linear-quadratic optimization problem with optimal solution \(\bar{q}(\nu)\);
cf.\ also~\cite{Kunisch2013}.
Clearly, minimizing \(V\) delivers the optimal time \(\bar{\nu}\). Moreover, by
established perturbation arguments (cf., e.g., \cite{Griesse2007,Bonnans2000}) using
\cref{assumption:non_degenerate}, it can be shown that:
\begin{enumerate}[(i)]
\item
  \(V\) is finite in a neighborhood of \(\bar{\nu}\) and twice differentiable.
\item
  It holds \(V'(\nu) = \partial_\nu\mathcal{L}(\nu,\bar{q}(\nu),\bar{\mu}(\nu)) = 0\),
  where \(\bar{\mu}(\nu)\) is the multiplier for the minimization problem
  in~\eqref{eq:value_function}.
\item
  The derivative of \(\nu \mapsto (\bar{q}(\nu), \bar{\mu}(\nu))\) at \(\bar{\nu}\) is the
  unique solution of~\eqref{eq:ssc_reduces_to_scalar_condition_system}.
\end{enumerate}
Differentiating the expression in (ii) with respect to \(\nu\) and using (iii) together with
the concrete form of~\eqref{eq:ssc_reduces_to_scalar_condition_system} we obtain 
\begin{align*}
V''(\bar{\nu})
= \partial^2_{(\nu,q)} \mathcal{L}(\bar{\nu},\bar{q},\bar{\mu})[1,\delta\bar{q}]^2
= \bar{\gamma},
\end{align*}
since \((\bar{q}(\bar{\nu}), \bar{\mu}(\bar{\nu})) = (\bar{q},\bar{\mu})\).
Thus, the constant \(\bar{\gamma}\) can be interpreted as the local curvature of the value
function \(V\) around the optimal time.
\end{remark}

\section{Finite element discretization}\label{sec:discretization}
With the first and second order optimality conditions at hand, we can
now turn to the main subject of this paper, i.e.\ \emph{a priori} discretization error estimates for the time-optimal control problem~\eqref{P}.
First, we prove a suboptimal convergence result where we rely on the quadratic growth condition of \cref{thm:secondOrderSufficient}. 
Thereafter, we provide an optimal discretization error estimate for the control variable that is directly based on the second order sufficient optimality condition~\eqref{eq:secondOrderSufficientCondition}.
First of all, we discuss the discretization method and provide stability and discretization error estimates.

\subsection{Discretization and problem statement}
\label{subsec:discretization_assumptions_and_problem_statement}
Consider a partitioning of the (reference) time interval $[0, 1]$ given as
\begin{equation*}
[0, 1] = \{0\}\cup I_1\cup I_2 \cup\ldots \cup I_M
\end{equation*}
with disjoint subintervals $I_m = (t_{m-1},t_m]$ of size $k_m$ defined by the time points
\begin{equation*}
0 = t_0 < t_1 < \ldots < t_{M-1} < t_M = 1\mbox{.}
\end{equation*}
We abbreviate the time discretization by the parameter $k$ defined as the piecewise constant function by setting $k|_{I_m} = k_m$ for all $m = 1,2,\ldots,M$. Simultaneously, we denote by $k$ the maximal size of the time steps, i.e.\ $k = \max k_m$. 
Moreover, we assume that the regularity conditions for the time mesh from \cite[Section~3.1]{Meidner2011a} are satisfied.

Concerning the spatial discretization, we consider a discretization consisting of triangular or tetrahedral cells $K$ that constitute a non-overlapping cover of the domain $\Omega$. We define the discretization parameter $h$ as the cellwise constant function $h|_K = h_K$ with diameter $h_K$ of the cell $K$ and set $h = \max h_K$. The corresponding mesh is denoted by $\mathcal{T}_h = \{K\}$. 	
Let $V_h \subset H^1_0$ denote the subspace of cellwise linear and continuous functions.
Moreover, let $\ProjH\colon L^2 \rightarrow V_h$ be
the $L^2$-projection onto $V_h$.
We assume that $\ProjH$ is stable in $H^1$. This is satisfied if, e.g., the mesh is globally quasi-uniform but weaker conditions are known; cf.\ \cite{Bramble2002}.
The corresponding space-time finite element space is constructed in a standard way by
\begin{equation*}
\Xkh = \left\{v_{kh} \in L^2(I; V_h) \constraintSet v_{kh}|_{I_m} \in \mathcal P_0(I_m; V_h)\mbox{,~}m =1,2,\ldots,M \right\}\mbox{,}
\end{equation*}
where $\mathcal P_0(I_m; V_h)$ denotes the space of constant functions on the time interval~$I_m$ with values in~$V_h$.	
For any function $\varphi_k\in \Xkh$ we set $\varphi_{k,m} \ldef \varphi_k(t_m)$ with $m=1,2,\ldots,M,$ as well as
$[\varphi_k]_m \ldef \varphi_{k,m+1}-\varphi_{k,m}$ for $m=1,2,\ldots,M-1$.
Now, we define the trilinear form $\B \colon \R\times\Xkh\times\Xkh \rightarrow \R$ as
\begin{multline*}\label{eq:discreteFormB}
\B(\nu, u_{kh}, \varphi_{kh}) \ldef \sum_{m=1}^M \pair{\partial_t u_{kh}, \varphi_{kh}}_{L^2(I_m; L^2)}\\
 + \nu\inner{\nabla u_{kh}, \nabla\varphi_{kh}}_{L^2(I; L^2)} + \sum_{m=2}^M([u_{kh}]_{m-1}, \varphi_{kh,m}) + \inner{u_{kh,1},\varphi_{kh,1}}\mbox{.}
\end{multline*}
Note that the definition of $\B$ above can be directly extended on the larger space
$\Xkh + W(0,1)$, which allows to formulate Galerkin orthogonality.
Given $\nu \in \Rplus$ and $q \in Q(0,1)$ the discrete state equation reads as follows: Find a state $u_{kh} \in \Xkh$ satisfying
\begin{align}
\B(\nu, u_{kh},\varphi_{kh}) &= \nu\inner{\ControlOp q,\varphi_{kh}}_{L^2(I; L^2)} + \inner{u_0,\varphi_{kh,1}}_{L^2} &\text{for all }\varphi_{kh}\in \Xkh\mbox{.}\label{eq:stateEquationDiscrete}
\end{align}
To consider different control discretizations at the same time, we introduce the
operator $\ProjDiscControl$
onto the (possibly discrete) control space $\Qsigma(0,1) \subset \Lcontrol{2}$ with an abstract parameter $\sigma$ for the control discretization. 
In case of distributed control, 
we additionally assume that a subset denoted $\mathcal{T}^\omega_h$ of the mesh $\mathcal{T}_h$ is a non-overlapping cover of $\omega$. 
We use the symbol $\sigma(k,h)$ to denote the error due to control discretization, i.e.
\begin{equation}\label{eq:estimate_projection_discrete_controls}
\norm{q - \ProjDiscControl q}_{L^2(I\times\omega)}  \leq \sigma(k,h)\norm{q}_\sigma\mbox{,}
\end{equation}
where $\norm{\cdot}_\sigma$ stands for a potentially different norm of
a subspace of $\Q(0,1)$. We suppose $\sigma(k,h) \rightarrow 0$ as $k, h \rightarrow 0$
and $\ProjDiscControl\Qad(0,1) \subset \Qad(0,1)$. Moreover, we assume
$\norm{\bar{q}}_\sigma <\infty$ and $\norm{q}_{\Lcontrol{2}} \leq \norm{q}_\sigma$. For notational simplicity we write $\ProjDiscControl (\nu,q) = (\nu, \ProjDiscControl q)$ using the same symbol.
Concrete discretization strategies for the control will be discussed in \cref{sec:ImprovedErrorEstimates}.
For convenience we define $\Qsigmaad(0,1) = \Qsigma(0,1)\cap\Qad(0,1)$.

Analogous to the continuous solution operator and the reduced constraint mapping, for
\(\nu \in \Rplus\) and \(q\in L^2(I\times\omega)\) we introduce the discrete versions as
\(S_{kh}(\nu,q) \ldef u_{kh} \), where \(u_{kh}\) solves \eqref{eq:stateEquationDiscrete} and
\[
g_{kh}(\nu,q) \ldef G(i_1 S_{kh}(\nu,q))\mbox{.}
\]
The discrete optimal control problem now reads as follows:
\begin{equation}\label{Pkh}\tag{\mbox{$\hat{P}_{kh}$}}
\inf_{\substack{\nu_{kh}\in\Rplus\\ q_{kh} \in \Qsigmaad(0,1)}} j(\nu_{kh}, q_{kh})
\quad\mbox{subject to}\quad g_{kh}(\nu_{kh},q_{kh}) \leq 0\mbox{.}
\end{equation}
At this point, the well-posedness of~\eqref{Pkh} is not clear. In the
following, as a by-product of the error analysis, we will show existence of feasible
points for \(k\) and \(h\) small enough (using the linearized Slater
condition~\eqref{eq:definition_linearized_slater_condition}), which implies
existence for~\eqref{Pkh} by similar arguments as for the continuous problem.
Furthermore, we derive optimality conditions and rates of convergence of the
optimization variables, where the second order sufficient
condition~\eqref{eq:secondOrderSufficientCondition} is an essential ingredient for the latter.

\subsubsection{Stability estimates for the PDE}
We introduce the discrete analogue $-\Lap_h\colon V_h\to V_h$ to the operator $-\Lap$ as
\begin{equation*}
-\inner{\Lap_h u_h, \varphi_h}_{L^2} = \inner{\nabla u_h, \nabla \varphi_h}_{L^2}\mbox{,}\quad \varphi_h \in V_h\mbox{.}
\end{equation*}
For the discretization error estimates we require stability estimates for the state, linearized state, and adjoint state.
\begin{proposition}\label{prop:stabilityEstimateDiscreteState}
	For every tuple $(\nu, q) \in \Rplus\times\Q(0,1)$ there exists a unique solution $u_{kh} \in \Xkh$ to the discrete state equation. Moreover, there is $c > 0$ independent of $u_0, u_{kh}, \nu$, and $q$ such that the following stability estimates hold
	\begin{align}
	\norm{u_{kh}(1)}^2_{L^2} + \nu\norm{u_{kh}}_{L^2(I; H_0^1)}^2 &\leq c \left(\nu\norm{\ControlOp q}^2_{L^2(I; H^{-1})} + \norm{\ProjH u_0}^2_{L^2}\right)\mbox{,}\label{eq:stabilityEstimateDiscreteState1}\\
	\norm{\nabla u_{kh}(1)}_{L^2}^2 &\leq c \left(\nu\norm{\ControlOp q}^2_{L^2(I; L^2)} + \frac{1}{\nu}\norm{\ProjH u_0}^2_{L^2}\right)\mbox{.}\label{eq:stabilityEstimateDiscreteState2}
	\end{align}
\end{proposition}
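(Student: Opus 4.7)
The plan is to first establish well-posedness by a standard marching argument, then derive
the two estimates independently using the DG energy framework. For well-posedness, I
note that on each time slab \(I_m\), the discrete equation \eqref{eq:stateEquationDiscrete}
reduces (by choosing test functions supported on one slab) to the finite-dimensional
system
\begin{equation*}
  (u_{kh,m},v_h) + k_m\nu(\nabla u_{kh,m},\nabla v_h) = (u_{kh,m-1},v_h) + \nu\int_{I_m}(\ControlOp q,v_h)\D{t}\quad\text{for all }v_h\in V_h,
\end{equation*}
with \(u_{kh,0}\ldef \ProjH u_0\). The operator \(\mathrm{Id}+k_m\nu(-\Lap_h)\) is
symmetric positive definite on \(V_h\), so each step is uniquely solvable, which gives
existence and uniqueness of \(u_{kh}\in\Xkh\).

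For~\eqref{eq:stabilityEstimateDiscreteState1} I would test
\eqref{eq:stateEquationDiscrete} with \(\varphi_{kh}=u_{kh}\). Since \(u_{kh}\) is
piecewise constant on each \(I_m\), the interior derivative terms in \(\B\) vanish, and
the jump and initial contributions assemble, via \((a-b,a)=\tfrac12(\|a\|^2-\|b\|^2+\|a-b\|^2)\),
into the telescoping identity
\begin{equation*}
\tfrac12\norm{u_{kh}(1)}_{L^2}^2 + \tfrac12\norm{u_{kh,1}}_{L^2}^2 + \tfrac12\sum_{m=1}^{M-1}\norm{[u_{kh}]_m}_{L^2}^2 + \nu\norm{u_{kh}}^2_{L^2(I;H^1_0)} = \nu\int_0^1\pair{\ControlOp q,u_{kh}}\D{t} + (\ProjH u_0,u_{kh,1}).
\end{equation*}
Bounding the control term by \(\tfrac{\nu}{2}\norm{\ControlOp q}^2_{L^2(I;H^{-1})}+\tfrac{\nu}{2}\norm{u_{kh}}^2_{L^2(I;H^1_0)}\)
via the \(H^{-1}\)--\(H^1_0\) duality, and the initial term by \(\tfrac12\norm{\ProjH u_0}^2_{L^2}+\tfrac12\norm{u_{kh,1}}^2_{L^2}\), and
absorbing into the left-hand side, yields~\eqref{eq:stabilityEstimateDiscreteState1}.

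For~\eqref{eq:stabilityEstimateDiscreteState2}, the natural idea is to test with
\(\varphi_{kh}=-\Lap_h u_{kh}\). The same cancellation of derivative terms together with
the identity above delivers the analogue with \(\norm{\nabla \cdot}_{L^2}\) in place of
\(\norm{\cdot}_{L^2}\) on the left. The forcing is absorbed using
\(\nu(\ControlOp q,-\Lap_h u_{kh})\le \tfrac{\nu}{2}\norm{\ControlOp q}^2_{L^2(I;L^2)}+\tfrac{\nu}{2}\norm{\Lap_h u_{kh}}^2_{L^2(I;L^2)}\);
however, the initial-data term \((\ProjH u_0,-\Lap_h u_{kh,1})\) would produce
\(\norm{\nabla\ProjH u_0}^2\), which is not permitted by the claimed bound.
\textbf{This is the main obstacle.} To obtain the \(\nu^{-1}\) weight, I would split
\(u_{kh}=u_{kh}^{(q)}+u_{kh}^{(0)}\) into the parts driven by the forcing \(Bq\) with zero
initial data, and by the initial data \(\ProjH u_0\) with zero forcing. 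For \(u_{kh}^{(q)}\)
the test \(\varphi_{kh}=-\Lap_h u_{kh}^{(q)}\) works cleanly, since the initial term
vanishes, and gives \(\norm{\nabla u_{kh}^{(q)}(1)}^2_{L^2}\le c\,\nu\norm{\ControlOp q}^2_{L^2(I;L^2)}\).
For \(u_{kh}^{(0)}\) I would invoke the discrete smoothing (regularity) estimate for the
homogeneous parabolic problem, of the form \(\norm{\nabla u_{kh}^{(0)}(1)}_{L^2}\le
c\,\nu^{-1/2}\norm{\ProjH u_0}_{L^2}\). This is the discrete analogue of the analytic
semigroup smoothing \(\norm{\mathrm{e}^{\nu\Lap}v}_{H^1_0}\le c(\nu t)^{-1/2}\norm{v}_{L^2}\)
at \(t=1\) and can be established by a weighted summation-by-parts argument (testing
successively with \(t_m\nu(-\Lap_h)u_{kh,m}\) and summing, or by iterating the resolvent
estimate for \((\mathrm{Id}+k_m\nu(-\Lap_h))^{-1}\)); such estimates for dG(0)
discretizations of parabolic problems are standard (cf.\ the framework of \cite{Meidner2011a}).
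Squaring and adding the two parts yields~\eqref{eq:stabilityEstimateDiscreteState2} with
constants independent of \(\nu\), \(q\), and \(u_0\).
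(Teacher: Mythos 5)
Your proposal is correct and follows essentially the same route as the paper: estimate \eqref{eq:stabilityEstimateDiscreteState1} by testing with \(u_{kh}\) (as in \cite{Meidner2008a}), and estimate \eqref{eq:stabilityEstimateDiscreteState2} by superposing the zero-initial-data part (tested with \(-\Lap_h u_{kh}\)) with the homogeneous part, for which the weighted test function \(-\nu t_m\Lap_h u_{kh}\) yields the discrete smoothing bound \(\norm{\nabla u_{kh}^{(0)}(1)}_{L^2}\leq c\,\nu^{-1/2}\norm{\ProjH u_0}_{L^2}\) (cf.\ \cite[Theorem~4.5]{Meidner2011a}). You also correctly identified the obstruction that motivates the splitting, namely that a direct test with \(-\Lap_h u_{kh}\) would leave a \(\norm{\nabla \ProjH u_0}_{L^2}^2\) term incompatible with the claimed right-hand side.
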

\begin{proof}
	For the first estimate, we proceed as in \cite{Meidner2008a} and test with $\varphi = u_{kh}$.
	To show the second estimate, we consider first the case $u_0 = 0$ and test with $\varphi = -\Lap_h u_{kh}$, and thereafter the case $q = 0$ where we test with $\varphi = -\nu t_m \Lap_h u_{kh}$ as in the proof of \cite[Theorem~4.5]{Meidner2011a}. Superposition of both estimates yields~\eqref{eq:stabilityEstimateDiscreteState2}.		
\end{proof}

\begin{corollary}\label{corollary:stabilityEstimateDiscreteTangent}
	Let $u_{kh}\in\Xkh$ be the state corresponding to $(\nu, q) \in \Rplus\times\Q(0,1)$. For all $(\delta\nu, \delta q) \in \R\times\Q(0,1)$ there are unique solutions $\delta u_{kh} \in \Xkh$ and $\delta \tilde{u}_{kh} \in \Xkh$ to the discrete linearized and second linearized state equation, i.e.
	\begin{align*}
	\B(\nu, \delta u_{kh},\varphi_{kh}) &= \inner{\delta\nu (\ControlOp q + \Lap_h u_{kh}) + \nu\ControlOp\delta q,\varphi_{kh}}_{L^2(I; L^2)}\mbox{,}\\
	\B(\nu, \delta \tilde{u}_{kh},\varphi_{kh}) &= 2\inner{\delta\nu (\ControlOp \delta q + \Lap_h\delta u_{kh}) ,\varphi_{kh}}_{L^2(I; L^2)}\mbox{,}
	\end{align*}
	for all $\varphi_{kh} \in \Xkh$.
	Moreover, it holds
	\begin{align*}
	\norm{\delta u_{kh}(1)}^2_{L^2}
	&\leq c \left(\abs{\delta\nu}^2(\norm{\ControlOp q}^2_{L^2(I; H^{-1})} + \frac{1}{\nu}\norm{\ProjH u_0}^2_{L^2}) + \nu\norm{\ControlOp\delta q}^2_{L^2(I; L^2)}\right)\mbox{,}\\
	\norm{\delta \tilde{u}_{kh}(1)}^2_{L^2}
	&\leq c \abs{\delta\nu}^2\left(\norm{\ControlOp q}^2_{L^2(I; H^{-1})} + \norm{\delta u_{kh}}_{L^2(I; H^1)}^2\right)\mbox{.}
	\end{align*}
	The constant $c > 0$ is independent of $u_0, \nu, q, \delta\nu, \delta q, \delta u_{kh}$, and $\delta\tilde{u}_{kh}$.
\end{corollary}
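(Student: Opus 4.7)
The existence and uniqueness of $\delta u_{kh}$ and $\delta \tilde u_{kh}$ in $\Xkh$ are immediate consequences of \cref{prop:stabilityEstimateDiscreteState}: the bilinear form $\B(\nu,\cdot,\cdot)$ on the left-hand side of both equations is exactly the one used in the discrete state equation, so the same invertibility argument applies. The sources on the right-hand side lie in $L^2(I;L^2)$ because $u_{kh}$ and $\delta u_{kh}$ are in $\Xkh \subset L^2(I;H^1_0)$, so the standard discrete parabolic theory with vanishing initial datum applies.

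For the first stability bound I would follow the energy argument used for~\eqref{eq:stabilityEstimateDiscreteState1}. Testing the linearized equation with $\varphi_{kh}=\delta u_{kh}$ and using the standard dG(0) telescoping identity yields the coercive combination $\tfrac12\|\delta u_{kh}(1)\|^2_{L^2}+\nu\|\delta u_{kh}\|^2_{L^2(I;H^1_0)}+\tfrac12\sum\|[\delta u_{kh}]_{m-1}\|^2_{L^2}$. Estimating the right-hand side by $H^{-1}$--$H^1_0$ duality and absorbing with Young's inequality gives
\[
\|\delta u_{kh}(1)\|^2_{L^2}\leq \frac{c}{\nu}\bigl\|\delta\nu(Bq+\Lap_h u_{kh})+\nu B\delta q\bigr\|^2_{L^2(I;H^{-1})}.
\]
The $\nu B\delta q$-piece contributes $c\nu\|B\delta q\|^2_{L^2(I;L^2)}$; the $\delta\nu Bq$-piece contributes $c|\delta\nu|^2\nu^{-1}\|Bq\|^2_{L^2(I;H^{-1})}$. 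For the $\delta\nu\,\Lap_h u_{kh}$-piece I would invoke the $H^1$-stability of $\ProjH$ to obtain $\|\Lap_h u_{kh}\|_{L^2(I;H^{-1})}\leq c\|u_{kh}\|_{L^2(I;H^1_0)}$ and then substitute~\eqref{eq:stabilityEstimateDiscreteState1} to bound $\|u_{kh}\|^2_{L^2(I;H^1_0)}$ by $c(\|Bq\|^2_{L^2(I;H^{-1})}+\nu^{-1}\|\ProjH u_0\|^2_{L^2})$. Collecting these terms (with the $\nu$-dependence absorbed in the generic constant, exactly as in~\cref{prop:stabilityEstimateDiscreteState}) gives the stated first estimate.

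For the second estimate I would apply the same energy argument to the equation for $\delta \tilde u_{kh}$, yielding
\[
\|\delta\tilde u_{kh}(1)\|^2_{L^2}\leq \frac{c}{\nu}\bigl\|2\delta\nu(B\delta q+\Lap_h \delta u_{kh})\bigr\|^2_{L^2(I;H^{-1})}.
\]
The Laplacian part directly produces $c|\delta\nu|^2\|\delta u_{kh}\|^2_{L^2(I;H^1)}$. The subtlety is the $B\delta q$-part: since the claimed bound contains $\|Bq\|^2$ and not $\|B\delta q\|^2$, I cannot simply estimate $\|B\delta q\|_{L^2(I;H^{-1})}$ directly. Instead I would use the linearized state equation as an identity by rewriting $\nu(B\delta q,\varphi_{kh})_{L^2(I;L^2)} = \B(\nu,\delta u_{kh},\varphi_{kh})-\delta\nu(Bq+\Lap_h u_{kh},\varphi_{kh})_{L^2(I;L^2)}$ for every $\varphi_{kh}\in\Xkh$. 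Taking the supremum over $\varphi_{kh}$ with $\|\varphi_{kh}\|_{L^2(I;H^1_0)}\leq 1$ converts this into an $H^{-1}$-bound of the form $\nu\|B\delta q\|_{L^2(I;H^{-1})}\leq c\nu\|\delta u_{kh}\|_{L^2(I;H^1_0)}+c|\delta\nu|\,\|Bq+\Lap_h u_{kh}\|_{L^2(I;H^{-1})}$, after which one more application of~\eqref{eq:stabilityEstimateDiscreteState1} turns the last factor into $\|Bq\|_{L^2(I;H^{-1})}$ plus a term that is again absorbed in $\|\delta u_{kh}\|_{L^2(I;H^1)}$.

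The main technical obstacle is precisely this last manipulation: the target inequality is phrased in terms of $\|Bq\|$ rather than $\|B\delta q\|$, which forces one to eliminate $B\delta q$ by using the equation for $\delta u_{kh}$ backwards. The remaining work is a routine repetition of the dG energy estimates already carried out in the proof of~\cref{prop:stabilityEstimateDiscreteState}, combined with the $H^1$-stability of $\ProjH$.
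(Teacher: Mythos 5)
The paper itself prints no proof of this corollary, so the comparison is against the implicit argument: repeat the energy estimates of \cref{prop:stabilityEstimateDiscreteState} for the two linearized equations. Your treatment of existence, uniqueness, and the first bound follows exactly that route and is essentially sound: testing with $\delta u_{kh}$, using $\inner{\Lap_h u_{kh},\varphi_{kh}}=-\inner{\nabla u_{kh},\nabla\varphi_{kh}}$ (or the $H^1$-stability of $\ProjH$), and substituting \eqref{eq:stabilityEstimateDiscreteState1} is the intended argument. One caveat: your calculation produces $\nu^{-1}\abs{\delta\nu}^2\norm{\ControlOp q}^2_{L^2(I;H^{-1})}+\nu^{-2}\abs{\delta\nu}^2\norm{\ProjH u_0}^2_{L^2}$, one power of $\nu$ worse than the printed right-hand side, and you cannot ``absorb the $\nu$-dependence into the generic constant'' because the statement explicitly asserts $c$ is independent of $\nu$. (The same power count appears in the continuous analogue \cref{prop:stabilityS}; this is a bookkeeping discrepancy with the printed statement, harmless in the paper's applications where $\nu\in[\nu_{\min},\nu_{\max}]$, but you should not paper over it.)

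The genuine gap is in your second estimate. The step
\begin{equation*}
\nu\norm{\ControlOp\delta q}_{L^2(I;H^{-1})}\leq c\,\nu\norm{\delta u_{kh}}_{L^2(I;H^1_0)}+c\abs{\delta\nu}\,\norm{\ControlOp q+\Lap_h u_{kh}}_{L^2(I;H^{-1})}
\end{equation*}
obtained by ``using the linearized equation backwards'' is false: for $\varphi_{kh}\in\Xkh$ one has
$\B(\nu,\delta u_{kh},\varphi_{kh})=\nu\inner{\nabla\delta u_{kh},\nabla\varphi_{kh}}_{L^2(I;L^2)}+\sum_{m=2}^{M}\inner{[\delta u_{kh}]_{m-1},\varphi_{kh,m}}+\inner{\delta u_{kh,1},\varphi_{kh,1}}$,
and the temporal jump and initial terms pair pointwise-in-time values of $\varphi_{kh}$ against the jumps of $\delta u_{kh}$; bounding them by $\norm{\varphi_{kh}}_{L^2(I;H^1_0)}$ costs factors $k_m^{-1/2}$, so the asserted inequality cannot hold uniformly in $k$. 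Even granting it, inserting it into the energy estimate would produce a term proportional to $\abs{\delta\nu}^4\nu^{-2}\norm{\ControlOp q}^2$, which is quartic in $\delta\nu$ and does not match the claimed bound. The resolution is that no such manoeuvre is intended: comparing with the continuous counterpart in \cref{prop:stabilityS}, where the bound for $\delta\tilde u$ involves $\norm{\delta q}_{L^2(I;H^{-1})}$, the term $\norm{\ControlOp q}^2_{L^2(I;H^{-1})}$ in the second inequality of the corollary should read $\norm{\ControlOp\delta q}^2_{L^2(I;H^{-1})}$. With that reading, the second bound follows from the same one-line energy argument as the first (test with $\delta\tilde u_{kh}$, estimate $2\delta\nu\inner{\ControlOp\delta q+\Lap_h\delta u_{kh},\delta\tilde u_{kh}}$ by duality and Young's inequality), and the detour through the equation for $\delta u_{kh}$ is unnecessary.
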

Similarly, we obtain for the auxiliary adjoint equation the following stability result.
\begin{proposition}\label{prop:stabilityEstimateDiscreteAdjoint}
	For every triple $(\nu, f, z_1) \in \Rplus\times L^2(I; L^2)\times H^1_0$ there exists a unique solution $\tilde{z}_{kh} \in \Xkh$ to 
	\begin{equation*}
	\B(\nu,\varphi_{kh},\tilde{z}_{kh}) = \nu\inner{f,\varphi_{kh}}_{L^2(I; L^2)} + \inner{z_1,\varphi_{kh}(1)}\mbox{,}\quad \varphi_{kh} \in \Xkh\mbox{.}
	\end{equation*}
	Moreover, there is $c > 0$ independent of $\tilde{z}_{kh}, \nu$, and $f$ such that
	\begin{align*}
	\norm{\tilde{z}_{kh}}_{L^2(I; H_0^1)} &\leq c\left(\norm{f}_{L^2(I; L^2)} + \frac{1}{\sqrt{\nu}}\norm{\ProjH z_1}_{L^2}\right)\mbox{,}\\
	\norm{\Lap_h \tilde{z}_{kh}}_{L^2(I; L^2)}&\leq c\left(\norm{f}_{L^2(I; L^2)} + \frac{1}{\sqrt{\nu}}\norm{\ProjH z_1}_{H^1}\right)\mbox{.}
	\end{align*}
\end{proposition}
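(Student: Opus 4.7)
The plan is to recognize the discrete adjoint problem, after summation by parts of the jump terms in $\B$, as a backward-in-time analogue of the forward discrete state equation from \cref{prop:stabilityEstimateDiscreteState}, and to establish the two estimates by testing with $\tilde{z}_{kh}$ and $-\Lap_h\tilde{z}_{kh}$, respectively. First, rearranging the jump contributions yields the identity
\begin{equation*}
\B(\nu,\varphi_{kh},\tilde{z}_{kh})
 = \nu(\nabla\varphi_{kh},\nabla\tilde{z}_{kh})_{L^2(I;L^2)}
 - \sum_{m=1}^{M-1}(\varphi_{kh,m},[\tilde{z}_{kh}]_m) + (\varphi_{kh,M},\tilde{z}_{kh,M}).
\end{equation*}
Testing with $\varphi_{kh}$ supported on a single interval $I_m$ then identifies a backward-in-time scheme with terminal datum $\ProjH z_1$, uniquely solvable on each subinterval by coercivity of the elliptic form, yielding existence and uniqueness of $\tilde{z}_{kh}\in\Xkh$.

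For the first stability estimate I would insert $\varphi_{kh}=\tilde{z}_{kh}$ and apply the polarization identity $(a,a-b)=\tfrac{1}{2}\norm{a}^2-\tfrac{1}{2}\norm{b}^2+\tfrac{1}{2}\norm{a-b}^2$ to the jump sum with $a=\tilde{z}_{kh,m}$. After telescoping this produces
\begin{equation*}
\nu\norm{\tilde{z}_{kh}}_{L^2(I;H_0^1)}^2
 + \tfrac{1}{2}\norm{\tilde{z}_{kh,1}}_{L^2}^2
 + \tfrac{1}{2}\norm{\tilde{z}_{kh,M}}_{L^2}^2
 = \nu(f,\tilde{z}_{kh})_{L^2(I;L^2)} + (\ProjH z_1,\tilde{z}_{kh,M}),
\end{equation*}
up to an additional nonnegative sum of $\tfrac{1}{2}\norm{[\tilde{z}_{kh}]_m}_{L^2}^2$. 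The claimed $L^2(I;H_0^1)$ bound then follows by Young's inequality (with a small weight to absorb $\norm{\tilde{z}_{kh,M}}_{L^2}^2$) and Poincar\'e's inequality to pass from $\norm{\tilde{z}_{kh}}_{L^2(I;L^2)}$ back to the $H_0^1$-norm.

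The second estimate follows by testing with $\varphi_{kh}=-\Lap_h\tilde{z}_{kh}\in\Xkh$. Using self-adjointness of $-\Lap_h$ on $V_h$, the elliptic term equals $\nu\norm{\Lap_h\tilde{z}_{kh}}_{L^2(I;L^2)}^2$, the terminal contribution equals $\norm{\nabla\tilde{z}_{kh,M}}_{L^2}^2$, and the polarization identity, now applied in the $H_0^1$-inner product to the jump sum, delivers additional nonnegative contributions involving $\norm{\nabla\tilde{z}_{kh,1}}_{L^2}^2$, $\norm{\nabla\tilde{z}_{kh,M}}_{L^2}^2$, and $\norm{\nabla[\tilde{z}_{kh}]_m}_{L^2}^2$. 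On the right-hand side, the key step is the identity $(z_1,-\Lap_h\tilde{z}_{kh,M})=(\nabla\ProjH z_1,\nabla\tilde{z}_{kh,M})$, which exploits $\Lap_h\tilde{z}_{kh,M}\in V_h$; Young's inequality then absorbs $\norm{\nabla\tilde{z}_{kh,M}}_{L^2}^2$ and produces the $1/\sqrt{\nu}$ scaling of the $\norm{\ProjH z_1}_{H^1}$ contribution. The main care required is the accurate bookkeeping of these telescoping and polarization identities; notably, no time-weighted test function of the type used in \cref{prop:stabilityEstimateDiscreteState} for the initial datum is needed, since the assumed $H_0^1$-regularity of $z_1$ already matches the parabolic regularity required for the $L^2(I;L^2)$ estimate of $\Lap_h\tilde{z}_{kh}$.
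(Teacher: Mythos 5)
Your proof is correct and follows essentially the approach the paper intends: the paper states this proposition without an explicit proof, remarking only that it is obtained ``similarly'' to \cref{prop:stabilityEstimateDiscreteState}, whose proof likewise tests with the solution itself and with $-\Lap_h$ applied to it. Your summation-by-parts identity, the telescoping/polarization bookkeeping, and the observation that no time-weighted test function is needed because $z_1\in H^1_0$ already supplies the regularity that the forward problem had to generate via smoothing are all accurate.
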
	
As in the continuous case we obtain a discrete analogue to \cref{prop:stability_g} using the stability estimates of \cref{prop:stabilityEstimateDiscreteState} and \cref{corollary:stabilityEstimateDiscreteTangent} for the discrete states. 
\begin{proposition}\label{prop:stabilityEstimateGkh}
	Let $0 < \nu_{\min} < \nu_{\max}$ be given. Then there exists $c > 0$ independent of $k$ and $h$ such that
	for all $\delta\nu \in \R$ and $\delta q \in \Lcontrol{2}$ it holds
	\begin{align}
	\abs{g_{kh}'(\nu, q)(\delta\nu, \delta q)} & \leq c\prodnorm{\delta\nu, \delta q}\label{eq:stabilityEstimateGkh1}\mbox{,}\\
	\abs{g_{kh}''(\nu, q)[\delta\nu, \delta q]^2}  &\leq c \prodnorm{\delta\nu, \delta q}^2\label{eq:stabilityEstimateGkh2}\mbox{.}
	\end{align}
	for all $\nu_{\min} \leq \nu \leq \nu_{\max}$ and $q \in \Qad(0,1)$.
	Moreover, $g_{kh}$ and $g_{kh}'$ are Lipschitz continuous on bounded sets.
\end{proposition}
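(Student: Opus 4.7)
The plan is to mirror the proof of the continuous counterpart \cref{prop:stability_g}, with the continuous stability estimates replaced by their discrete versions from \cref{prop:stabilityEstimateDiscreteState,corollary:stabilityEstimateDiscreteTangent}. Since \(g_{kh}(\nu,q) = G(i_1 S_{kh}(\nu,q))\) and \(G\) is quadratic (see~\eqref{eq:terminal_constraint}), an application of the chain rule -- entirely analogous to \cref{proposition:derivatives_g} -- yields
\begin{align*}
g_{kh}'(\nu,q)(\delta\nu,\delta q) &= \inner{u_{kh}(1) - u_d,\delta u_{kh}(1)}_{L^2}\mbox{,}\\
g_{kh}''(\nu,q)[\delta\nu,\delta q]^2 &= \norm{\delta u_{kh}(1)}_{L^2}^2 + \inner{u_{kh}(1)-u_d,\delta\tilde u_{kh}(1)}_{L^2}\mbox{,}
\end{align*}
where \(\delta u_{kh},\delta\tilde u_{kh} \in \Xkh\) are the first and second discrete linearized states from \cref{corollary:stabilityEstimateDiscreteTangent}.

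To establish~\eqref{eq:stabilityEstimateGkh1} and~\eqref{eq:stabilityEstimateGkh2}, note that \(\Qad\) is bounded in \(L^\infty\), so \(\norm{\ControlOp q}_{L^2(I;L^2)}\) is uniformly bounded for \(q\in\Qad(0,1)\), and the \(H^1\)-stability of \(\ProjH\) yields \(\norm{\ProjH u_0}_{L^2} + \norm{\ProjH u_0}_{H^1} \leq c\norm{u_0}_{H^1_0}\). Inserting these bounds into~\eqref{eq:stabilityEstimateDiscreteState1} for \(\nu \in [\nu_{\min},\nu_{\max}]\) furnishes a uniform (in \(k,h\)) bound on \(\norm{u_{kh}(1)}_{L^2} + \norm{u_{kh}}_{L^2(I;H^1_0)}\). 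Applying \cref{corollary:stabilityEstimateDiscreteTangent} then produces \(\norm{\delta u_{kh}(1)}_{L^2} \leq c\prodnorm{\delta\nu,\delta q}\) and \(\norm{\delta\tilde u_{kh}(1)}_{L^2} \leq c\prodnorm{\delta\nu,\delta q}^2\), where for the latter the estimate \(\norm{\Lap_h \delta u_{kh}}_{L^2(I;H^{-1})} \leq c\norm{\delta u_{kh}}_{L^2(I;H^1_0)}\) (a consequence of the \(H^1\)-stability of \(\ProjH\)) is used. Combining these with a Cauchy--Schwarz estimate in the explicit derivative formulas above, and using \(u_d \in L^2\), yields the two desired bounds.

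For the Lipschitz continuity on bounded sets, fix \(\nu_i \in [\nu_{\min},\nu_{\max}]\), \(q_i \in \Qad(0,1)\), \(i=1,2\), set \(u_{kh,i}=S_{kh}(\nu_i,q_i)\), and \(\Delta u_{kh}=u_{kh,1}-u_{kh,2}\). A short computation shows that \(\Delta u_{kh}\) satisfies \(\B(\nu_1,\Delta u_{kh},\varphi)=\inner{(\nu_1-\nu_2)(\ControlOp q_2+\Lap_h u_{kh,2})+\nu_1\ControlOp(q_1-q_2),\varphi}_{L^2(I;L^2)}\) with zero initial data. Applying \cref{prop:stabilityEstimateDiscreteState} together with the uniform bound on \(\norm{\Lap_h u_{kh,2}}_{L^2(I;H^{-1})}\) from the previous step yields \(\norm{\Delta u_{kh}(1)}_{L^2} \leq c\prodnorm{\nu_1-\nu_2,q_1-q_2}\). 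The Lipschitz bound on \(g_{kh}\) then follows from the polarization identity \(G(u)-G(\tilde u) = \inner{u-\tilde u,\,(u+\tilde u)/2 - u_d}_{L^2}\). An analogous (longer) calculation for the difference of the linearized states, which again solves a discrete state equation with right-hand side controlled by \(\prodnorm{\nu_1-\nu_2,q_1-q_2}\prodnorm{\delta\nu,\delta q}\), delivers Lipschitz continuity of \(g_{kh}'\).

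The main obstacle is purely book-keeping: carefully verifying that every constant arising along the way is independent of \(k,h\) and depends only on \(\nu_{\min},\nu_{\max},\norm{u_0}_{H^1_0},\norm{u_d}_{L^2}\) and the control bounds \(q_a,q_b\). No new PDE tools beyond \cref{prop:stabilityEstimateDiscreteState,corollary:stabilityEstimateDiscreteTangent} and the \(H^1\)-stability of \(\ProjH\) are needed.
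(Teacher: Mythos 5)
Your proposal is correct and follows exactly the route the paper intends: the paper gives no detailed proof but states that the result follows "as in the continuous case" (\cref{prop:stability_g}) from the discrete stability estimates of \cref{prop:stabilityEstimateDiscreteState} and \cref{corollary:stabilityEstimateDiscreteTangent}, which is precisely what you carry out, including the correct derivative formulas, the uniform bounds from $q\in\Qad(0,1)$ and the stability of $\ProjH$, and the equation satisfied by the difference of discrete states for the Lipschitz claims. No gaps.
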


\subsubsection{Discretization error for terminal constraint}
Next, we establish discretization error estimates concerning the reduced constraint
function $g$. Note that general error estimates for the state equation are collected in
\cref{appendix:discretization_error_estimates}.

\begin{proposition}\label{prop:errorEstimateDiscreteAdjoint}
	Let $0 < \nu_{\min} < \nu_{\max}$, $(\nu, q) \in [\nu_{\min}, \nu_{\max}]\times\Qad(0,1)$, and $\mu \in \R$. For the adjoint state $z$ defined in~\eqref{eq:adjoint_state_equation} associated with $u = u(\nu,q)$ and the discrete adjoint state $z_{kh}$ defined by
	\begin{equation*}
	\B(\nu,\varphi_{kh},z_{kh}) = \mu\inner{u_{kh}(1)-u_d,\varphi_{kh}(1)}\mbox{,}\quad \varphi_{kh} \in \Xkh\mbox{,}
	\end{equation*}
	associated with $u_{kh} = u_{kh}(\nu,q)$ it holds
	\begin{align}
	\norm{z - z_{kh}}_{L^2(I; L^2)}&\leq c\abs{\log k}(k + h^2) \left(\norm{\ControlOp q}_{L^\infty(I; L^2)} + \norm{u_0}_{L^2}\right) \abs{\mu},\label{eq:discretizationErrorEstimateAdjointL2L2}\\
	\norm{\nabla z - \nabla z_{kh}}_{L^2(I; L^2)}&\leq c\abs{\log k}(k^{1/2} + h) \left(\norm{\ControlOp q}_{L^\infty(I; L^2)} + \norm{u_0}_{L^2}\right) \abs{\mu},\label{eq:discretizationErrorEstimateAdjointL2H1}
	\end{align}
	where $c > 0$ is a constant independent of $z, z_{kh}, \nu$, and $q$.
\end{proposition}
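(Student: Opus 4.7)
The plan is to use a splitting argument that isolates the effect of approximating the continuous terminal data $\mu(u(1)-u_d)$ by the fully-discrete terminal data $\mu(u_{kh}(1)-u_d)$. To this end, I introduce an auxiliary discrete adjoint state $\tilde{z}_{kh} \in X_{k,h}$ that uses the \emph{exact} terminal value, i.e.\ the unique solution of
\begin{equation*}
  \B(\nu,\varphi_{kh},\tilde{z}_{kh}) = \mu\inner{u(1)-u_d,\varphi_{kh}(1)}, \quad \varphi_{kh} \in X_{k,h},
\end{equation*}
and write $z - z_{kh} = (z - \tilde{z}_{kh}) + (\tilde{z}_{kh} - z_{kh})$.

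For the first summand I would invoke the a priori error estimates from \cref{appendix:discretization_error_estimates} applied to the backward parabolic equation solved by $z$. The prerequisite regularity $z \in H^1(I;L^2)\cap L^2(I;H^2\cap H^1_0)$ is established as in \cref{prop:regularityOptimalSolution}: since $u_0 \in H^1_0$ and $Bq \in L^\infty(I;L^2)$, maximal parabolic regularity gives $u \in H^1(I;L^2)\cap L^2(I;H^2\cap H^1_0) \embedding C([0,1];H^1_0)$, and together with $u_d \in H^1_0$ this yields $z(1) = \mu(u(1)-u_d) \in H^1_0$, from which the same regularity for $z$ follows. The appendix then supplies the $L^2(I;L^2)$ bound of order $\abs{\log k}(k+h^2)$ and the $L^2(I;H^1_0)$ bound of order $\abs{\log k}(k^{1/2}+h)$, each multiplied by $\abs{\mu}(\norm{Bq}_{L^\infty(I;L^2)}+\norm{u_0}_{L^2})$ via the stability of the control-to-state map.

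For the second summand, $\tilde{z}_{kh} - z_{kh} \in X_{k,h}$ solves the discrete backward equation with zero right-hand side and terminal datum $\mu(u(1)-u_{kh}(1))$. Applying \cref{prop:stabilityEstimateDiscreteAdjoint} with $f=0$ together with Poincar\'e's inequality (to pass from the $L^2(I;H^1_0)$ estimate to $L^2(I;L^2)$) and the $H^1$-stability of $\Pi_h$, I obtain
\begin{equation*}
  \norm{\tilde{z}_{kh} - z_{kh}}_{L^2(I;L^2)} + \norm{\nabla(\tilde{z}_{kh} - z_{kh})}_{L^2(I;L^2)}
  \leq c\,\nu^{-1/2}\,\abs{\mu}\,\norm{u(1)-u_{kh}(1)}_{L^2}.
\end{equation*}
The point-in-time error $\norm{u(1)-u_{kh}(1)}_{L^2} \leq c\abs{\log k}(k+h^2)(\norm{Bq}_{L^\infty(I;L^2)}+\norm{u_0}_{L^2})$ is furnished by the appendix, so this contribution is actually dominated by (or of the same order as) the first summand in both norms.

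The main obstacle is keeping track of the regularity and norms of the terminal data needed for the appendix estimates, in particular that the $\abs{\log k}(k+h^2)$ order in the $L^2(I;L^2)$ error of the backward equation requires $z(1) \in H^1_0$ rather than merely $L^2$. This is the reason for the assumption $u_d \in H^1_0$ (cf.\ \cref{remark:regularity_G}) and is what ultimately makes the splitting strategy yield the optimal rates claimed. Combining the two bounds and uniform dependence of constants on $\nu \in [\nu_{\min},\nu_{\max}]$ finishes the proof.
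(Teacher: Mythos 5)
Your argument is correct, but it runs the splitting ``the other way around'' from the paper. The paper inserts a \emph{continuous} auxiliary adjoint $\tilde{z}$ with the \emph{discrete} terminal datum $\mu(u_{kh}(1)-u_d)$, so that $z-\tilde{z}$ is a data-perturbation error controlled by a continuous stability estimate (as in \cref{prop:stabilityS}) together with \eqref{eq:errorEstimatesStateEquationLinfL2}, while $\tilde{z}-z_{kh}$ is a pure discretization error whose regularity norm $\norm{u_{kh}(1)}_{H^1}$ is then absorbed via the discrete stability bound \eqref{eq:stabilityEstimateDiscreteState2}. You instead insert a \emph{discrete} auxiliary adjoint with the \emph{exact} terminal datum, so your pure-discretization term needs the continuous regularity $z\in H^1(I;L^2)\cap L^2(I;H^2\cap H^1_0)$ (hence $u(1), u_d\in H^1_0$, which you correctly identify as the reason for the assumption on $u_d$), and your data-perturbation term is handled by the discrete stability result \cref{prop:stabilityEstimateDiscreteAdjoint} with $f=0$. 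Both routes use exactly the ingredients the paper provides (\cref{lemma:errorEstimatesStateEquation,lemma:errorEstimatesStateEquationLinf}, the $H^1$-stability of $\ProjH$, and uniformity in $\nu\in[\nu_{\min},\nu_{\max}]$) and yield the same rates; the only cosmetic difference is where the $H^1$ bound on the terminal data is paid for (discretely in the paper, continuously in your version). Two small points: the appendix error estimates are stated for the forward equation, so you should note they transfer to the backward equation by the reversal $t\mapsto 1-t$ (the paper uses them for the adjoint in the same implicit way); and the logarithm in your first summand is not actually needed there --- it enters only through the pointwise-in-time estimate \eqref{eq:errorEstimatesStateEquationLinfL2} of $u(1)-u_{kh}(1)$ in the second summand --- but stating it is harmless.
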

\begin{proof}
	We consider the splitting
	\begin{equation}
	\label{eq:discretizationErrorEstimateAdjointP1}
	z - z_{kh} = z - \tilde{z} + \tilde{z} - z_{kh}\mbox{,}
	\end{equation}
	where $\tilde{z}$ denotes the solution to
	\begin{equation*}
	-\partial_t \tilde{z} - \nu \Lap\tilde{z} = 0\mbox{,}\quad \tilde{z}(1) = \mu(u_{kh}(1)-u_d)\mbox{.}
	\end{equation*}
	By means of the stability estimates \cref{prop:stabilityS} for $u$ and \cref{prop:stabilityEstimateDiscreteState} for $u_{kh}$ as well as boundedness of $q \in \Qad(0,1)$ and $\nu \in [\nu_{\min}, \nu_{\max}]$ we find that $u(1)$ and $u_{kh}(1)$ are uniformly bounded in $L^2$. Employing a stability result similar as \cref{prop:stabilityS} and Lipschitz continuity of $G'$ on bounded sets in $L^2$ we infer
	\begin{align}
	\norm{z - \tilde{z}}_{L^2(I; H^1)} &\leq c\frac{\abs{\mu}}{\sqrt{\nu}}\norm{u(1) - u_{kh}(1)}_{L^2}\nonumber\\
	&\leq c(\nu_{\min}, \nu_{\max})\abs{\mu}\abs{\log k}(k + h^2)\left(\norm{\ControlOp q}_{L^\infty(I; L^2)} + \norm{u_0}_{L^2}\right)\label{eq:discretizationErrorEstimateAdjointP3},
	\end{align}
	where we have used the discretization error estimate~\eqref{eq:errorEstimatesStateEquationLinfL2} in the last step. The second term in~\eqref{eq:discretizationErrorEstimateAdjointP1} is a pure discretization error, therefore,
	\begin{align*}
	\norm{\tilde{z} - z_{kh}}_{L^2(I; L^2)} &\leq c(k + h^2) \abs{\mu} \norm{u_{kh}(1)}_{H^1},\\
	\norm{\nabla\tilde{z} - \nabla z_{kh}}_{L^2(I; L^2)} &\leq c(k^{1/2} + h)  \abs{\mu}\norm{u_{kh}(1)}_{H^1}\mbox{;}
	\end{align*}
	cf.\ \eqref{eq:errorEstimatesStateEquationL2L2} and~\eqref{eq:errorEstimatesStateEquationL2H1}.
	The assertion follows from~\eqref{eq:discretizationErrorEstimateAdjointP3}, the two preceding estimates and the stability estimates~\eqref{eq:stabilityEstimateDiscreteState1} and~\eqref{eq:stabilityEstimateDiscreteState2} applied for $u_{kh}$.
\end{proof}

\begin{proposition}\label{prop:errorEstimatesG}
	Let $0 < \nu_{\min} < \nu_{\max}$ be fixed. Consider $(\nu, q) \in [\nu_{\min}, \nu_{\max}]\times\Qad(0,1)$ and $(\delta\nu, \delta q) \in \R\times\Q(0,1)$. Then there is $c > 0$ independent of $(\nu, q)$ and $(\delta\nu, \delta q)$ such that
	\begin{align}
	\abs{g(\nu,q) - g_{kh}(\nu,q)} &\leq c\abs{\log k} (k + h^2)\left(\norm{\ControlOp q}_{L^\infty(I; L^2)} + \norm{u_0}_{L^2}\right)\mbox{,}\label{eq:errorEstimatesG1}\\
	\abs{(g'(\nu,q) - g_{kh}'(\nu,q))(\delta\nu, \delta q)} &\leq c \abs{\log k} (k + h^2)\left(\norm{\ControlOp q}_{L^\infty(I; L^2)} + \norm{u_0}_{H^1}\right)\prodnorm{\delta\nu, \delta q}\mbox{,}\label{eq:errorEstimatesG2}
	\end{align}
	where $c > 0$ is a constant independent of $\nu, q, \delta\nu$, and $\delta q$.
\end{proposition}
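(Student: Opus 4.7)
The plan for~\eqref{eq:errorEstimatesG1} is to exploit the quadratic structure of \(G\). Writing \(g(\nu,q) - g_{kh}(\nu,q) = G(u(1)) - G(u_{kh}(1))\) and expanding the quadratic, one obtains
\[
\abs{g(\nu,q) - g_{kh}(\nu,q)} \leq \tfrac{1}{2}\norm{u(1)+u_{kh}(1)-2u_d}_{L^2}\, \norm{u(1) - u_{kh}(1)}_{L^2}.
\]
The first factor is uniformly bounded by the stability estimates for \(u\) and \(u_{kh}\) (\cref{prop:stabilityS} and~\cref{prop:stabilityEstimateDiscreteState}) together with the boundedness of \((\nu,q)\); the second factor supplies the rate \(\abs{\log k}(k+h^2)\) via the state error estimate~\eqref{eq:errorEstimatesStateEquationLinfL2}.

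For~\eqref{eq:errorEstimatesG2}, the plan is to employ the adjoint representation. Let \(z\) and \(z_{kh}\) be the continuous and discrete adjoint states of~\cref{prop:errorEstimateDiscreteAdjoint} with \(\mu=1\). Using~\eqref{eq:terminal_constraint_adjoint} on the continuous side and, on the discrete side, testing the discrete tangent equation (\cref{corollary:stabilityEstimateDiscreteTangent}) with \(z_{kh}\), the difference can be written as
\[
(g'-g_{kh}')(\nu,q)(\delta\nu,\delta q) = \delta\nu\, A + \nu \int_0^1 (B^*(z-z_{kh}),\delta q),
\]
with \(A = \int_0^1 (Bq, z-z_{kh}) + T_1 + T_2\), where the Laplace-type terms \(T_1 = -\int_0^1(\nabla(u-u_{kh}),\nabla z)\) and \(T_2 = -\int_0^1(\nabla u_{kh},\nabla(z-z_{kh}))\) arise from \(-\int_0^1(\nabla u,\nabla z)+\int_0^1(\nabla u_{kh},\nabla z_{kh})\) after adding and subtracting \(\int_0^1(\nabla u_{kh},\nabla z)\). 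The \(\delta q\)-contribution and the \((Bq, z-z_{kh})\) term are directly bounded by \(\norm{z-z_{kh}}_{L^2(I;L^2)}\), giving the required rate via~\eqref{eq:discretizationErrorEstimateAdjointL2L2}. For \(T_1\), I would integrate by parts once more into \(\int_0^1 (u-u_{kh},\Delta z)\) and estimate by the product \(\norm{u-u_{kh}}_{L^2(I;L^2)}\cdot\norm{\Delta z}_{L^2(I;L^2)}\); parabolic regularity of the adjoint supplies \(\norm{\Delta z}_{L^2(I;L^2)}\leq c\norm{z(1)}_{H^1} = c\norm{u(1)-u_d}_{H^1}\), which is where the stronger assumptions \(u_0,u_d \in H^1_0(\Omega)\) enter and give rise to the \(\norm{u_0}_{H^1}\) on the right-hand side.

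The main obstacle is the term \(T_2\), since a naive use of the \(L^2(I;H^1)\) adjoint error estimate~\eqref{eq:discretizationErrorEstimateAdjointL2H1} only produces the suboptimal rate \(\abs{\log k}(k^{1/2}+h)\). The plan to recover the optimal rate is to insert the \(H^1_0\)-Ritz projection \(R_h z\): since \(\int_\Omega(\nabla v_h,\nabla(z-R_h z))=0\) for all \(v_h\in V_h\), we have
\[
T_2 = -\int_0^1 (\nabla u_{kh},\nabla(R_h z - z_{kh})) = \int_0^1 (\Delta_h u_{kh}, R_h z - z_{kh}).
\]
The factor \(\norm{\Delta_h u_{kh}}_{L^2(I;L^2)}\) is uniformly bounded using a discrete analogue of~\cref{prop:stabilityS} (again requiring \(u_0\in H^1_0\)), while \(\norm{R_h z - z_{kh}}_{L^2(I;L^2)}\) admits the optimal rate \(\abs{\log k}(k+h^2)\) by combining the standard Ritz estimate \(\norm{R_h z - z}_{L^2(I;L^2)}\leq c h^2 \norm{z}_{L^2(I;H^2)}\) with~\eqref{eq:discretizationErrorEstimateAdjointL2L2}. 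Collecting the four contributions then yields~\eqref{eq:errorEstimatesG2}.
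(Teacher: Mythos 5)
Your proposal is correct, and for the first estimate it coincides with the paper's argument (the paper simply invokes Lipschitz continuity of \(G\) on bounded sets together with~\eqref{eq:errorEstimatesStateEquationLinfL2}; your explicit expansion of the quadratic is the same thing). For the second estimate you use the same adjoint-representation framework as the paper, but you split the Laplacian terms differently, and this changes which auxiliary tools are needed. The paper writes
\begin{equation*}
\pair{\Lap u, z} - \pair{\Lap_h u_{kh}, z_{kh}} = -\pair{u_{kh}-u,\Lap z} + \pair{\nabla u_{kh}-\nabla u,\nabla z_{kh}-\nabla z} - \pair{\Lap u, z_{kh}-z}\mbox{,}
\end{equation*}
a \emph{symmetric} three-term decomposition whose middle term is a product of two gradient errors; each factor is only of order \(k^{1/2}+h\) by~\eqref{eq:errorEstimatesStateEquationL2H1} and~\eqref{eq:discretizationErrorEstimateAdjointL2H1}, but their product is of full order \(k+h^2\) (up to the logarithm), so no additional machinery is required beyond the estimates already stated. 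Your asymmetric two-term splitting leaves the term \(T_2 = -\int_0^1(\nabla u_{kh},\nabla(z-z_{kh}))\), which is only \emph{linear} in the error and therefore genuinely needs your Ritz-projection argument to recover full order. That argument is sound: the Galerkin orthogonality of \(R_h\), the identity \(-(\nabla u_{kh},\nabla v_h)=(\Lap_h u_{kh},v_h)\), the \(L^2\)-Ritz estimate on the convex domain, and~\eqref{eq:discretizationErrorEstimateAdjointL2L2} all go through. The price is that you additionally need the discrete stability bound \(\norm{\Lap_h u_{kh}}_{L^2(I;L^2)}\leq c\bigl(\norm{\ControlOp q}_{L^2(I;L^2)}+\nu^{-1/2}\norm{\ProjH u_0}_{H^1}\bigr)\), which is not stated in the paper for the state (only its adjoint analogue appears in \cref{prop:stabilityEstimateDiscreteAdjoint}); it does follow by testing~\eqref{eq:stateEquationDiscrete} with \(-\Lap_h u_{kh}\) exactly as in the proof of~\eqref{eq:stabilityEstimateDiscreteState2}, using the assumed \(H^1\)-stability of \(\ProjH\), and it is consistent with the \(\norm{u_0}_{H^1}\) appearing on the right-hand side of~\eqref{eq:errorEstimatesG2}. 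In short: both routes are valid; the paper's symmetric splitting is the more economical one, while yours trades the ``product of two half-order errors'' trick for a Ritz-projection superconvergence argument plus one extra discrete stability estimate.
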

\begin{proof}
	From the discretization error estimate~\eqref{eq:errorEstimatesStateEquationLinfL2}
	and Lipschitz continuity of $G$ on bounded sets in $L^2$ we conclude
	\begin{equation*}
	\abs{g(\nu,q) - g_{kh}(\nu,q)} \leq c(\nu) \norm{u(1) - u_{kh}(1)}_{L^2} \leq c(\nu)\abs{\log k} (k + h^2)\mbox{.}
	\end{equation*}		
	To prove~\eqref{eq:errorEstimatesG2}, we use the adjoint representation~\eqref{eq:terminal_constraint_adjoint} and its discrete analogue. Let $\mu \in \R$, then
	\begin{equation*}
	\mu [g'(\nu,q) - g_{kh}'(\nu,q)]^*
	= \left(\begin{array}{l}
	\int_{0}^{1}\pair{\ControlOp q, z - z_{kh}} + \pair{\Lap u, z} - \pair{\Lap_h u_{kh}, z_{kh}}\D{t}\\
	\nu \ControlOp^*(z -z_{kh})
	\end{array}
	\right).
	\end{equation*}
	Clearly, the terms involving $z - z_{kh}$  can be estimated using~\eqref{eq:discretizationErrorEstimateAdjointL2L2}. Concerning the remaining terms of the first component, we have
	\begin{equation*}
	\pair{\Lap u, z} - \pair{\Lap_h u_{kh}, z_{kh}} = 
	-\pair{u_{kh} - u, \Lap z} + \pair{\nabla u_{kh}-\nabla u, \nabla z_{kh}-\nabla z} -  \pair{\Lap  u, z_{kh}-z}\mbox{.}
	\end{equation*}
	Since $\Lap u, \Lap z \in L^2(I; L^2)$, we conclude
	\begin{align*}
	\abs{\pair{\Lap_h u_{kh}, z_{kh}} - \pair{\Lap u, z}} 
	&\leq c(\norm{u_{kh} - u}_{L^2(I; L^2)}\abs{\mu} + \norm{z_{kh} - z}_{L^2(I; L^2)}\\
	&\quad+ \norm{\nabla u_{kh} - \nabla u}_{L^2(I; L^2)}\norm{\nabla z_{kh} - \nabla z}_{L^2(I; L^2)})\\
	&\leq c(\nu)\abs{\log k} (k + h^2)\abs{\mu}\left(\norm{\ControlOp q}_{L^\infty(I; L^2)} + \norm{u_0}_{H^1}\right)
	\end{align*}
	according to~\eqref{eq:errorEstimatesStateEquationL2L2},~\eqref{eq:discretizationErrorEstimateAdjointL2L2},~\eqref{eq:errorEstimatesStateEquationL2H1}, and~\eqref{eq:discretizationErrorEstimateAdjointL2H1}.
	Thus, we obtain~\eqref{eq:errorEstimatesG2}.
\end{proof}

\subsection{Convergence analysis}\label{subsec:convergenceAnalysis}
In order to deal with local solutions, we apply a standard localization argument, cf.\ \cite{Casas2002c}. For a given locally optimal control $(\bar{\nu}, \bar{q})$ of~\eqref{Pt} in $\Qad\cap\overline{\mathcal{B}_\rho(\bar{\nu},\bar{q})}$ with $\rho > 0$ sufficiently small
satisfying the linearized Slater condition \cref{assumption:linearized_slater}, 
we introduce the auxiliary problem
\begin{equation}\label{PkhLocal}\tag{\mbox{$\hat{P}_{kh}^\rho$}}
\inf_{\substack{\nu_{kh}\in\Rplus\\ q_{kh} \in \Qsigmaad(0,1)}} j(\nu_{kh}, q_{kh})
\quad\mbox{subject to}\quad
\begin{cases}
 &g_{kh}(\nu_{kh},q_{kh}) \leq 0\mbox{,} \\
 &\prodnorm{\nu_{kh} - \bar{\nu}, q_{kh} - \bar{q}} \leq \rho\mbox{.}
\end{cases}
\end{equation}
We first construct a sequence of tuples $\{(\nu_\gamma,q_\gamma)\}_{\gamma > 0}$
converging to $(\bar{\nu},\bar{q})$ as $\gamma\to 0$ that is feasible for the localized
problem (for sufficiently small \(k\) and \(h\)).
In particular, this implies existence of solutions to~\eqref{PkhLocal}. Thereafter we
construct a sequence $\{(\nu_\tau,q_\tau)\}_{\tau > 0}$ converging to
$(\bar{\nu}_{kh}^\rho,\bar{q}_{kh}^\rho)$ as $\tau\to 0$ that is feasible for~\eqref{Pt}.
Feasibility of the $\tau$-sequence for~\eqref{Pt} with the quadratic growth
condition~\eqref{eq:quadraticGrowthCondition} yields convergence of discrete solutions to
$(\bar{\nu},\bar{q})$ at a suboptimal rate. The convergence result will later be the basis
for the improved convergence rate in \cref{sec:ImprovedErrorEstimates}.

In order to ensure that the constants in the following arguments are independent of
\(\bar{\nu}^\rho_{kh}\), we have to guarantee that $\bar{\nu}^\rho_{kh}$ is uniformly
bounded away from zero;
cf., e.g., \cref{prop:stabilityEstimateGkh,prop:errorEstimatesG,appendix:discretization_error_estimates}.
To this end, we always assume in the following that \(\rho \leq \bar{\nu}/2\), which implies
\(\bar{\nu}/2 \leq \bar{\nu}^\rho_{kh} \leq (3/2)\bar{\nu}\) by the localization in~\eqref{PkhLocal}.

\subsubsection{The localized discrete problem}

In the following, we will repeatedly make use of the Slater point
\(\breve{\chi}^\gamma\) defined in~\eqref{eq:definition_linearized_slater_condition_general}.
We start by constructing admissible elements for the discrete problems.
\begin{proposition}\label{prop:auxiliarySequenceGamma}
	Let $(\bar{\nu},\bar{q})$ be a locally optimal control of
        problem~\eqref{Pt}. There exists a sequence $\{(\nu_\gamma,q_\gamma)\}_{\gamma > 0}$ of controls with $\gamma = \gamma(k,h)$ that are feasible
        for~\eqref{PkhLocal} for $k,h$ sufficiently small. Moreover,
	\begin{equation*}
	\abs{\nu_{\gamma}-\bar{\nu}} + \norm{q_{\gamma}-\bar{q}}_{L^2(I\times\omega)}
	\leq c\left(\sigma(k,h) + \abs{\log k} (k + h^2)\right)\mbox{.}
	\end{equation*}
\end{proposition}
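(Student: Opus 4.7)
The plan is to build $(\nu_\gamma, q_\gamma)$ by combining the Slater direction in $\nu$ with the discrete projection of $\bar{q}$, namely $q_\gamma = \ProjDiscControl \bar{q} \in \Qsigmaad(0,1)$ and $\nu_\gamma = \bar{\nu} + \gamma$ with a parameter $\gamma = \gamma(k,h) > 0$ to be tuned. By the assumption $\ProjDiscControl\Qad(0,1) \subset \Qad(0,1)$, we have $q_\gamma \in \Qsigmaad(0,1)$ automatically, and for small enough $k,h$ (hence small $\gamma$) also $\nu_\gamma \in \Rplus$. The goal is then to verify $g_{kh}(\nu_\gamma,q_\gamma) \leq 0$ and to control the distance to $(\bar{\nu},\bar{q})$.

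First, I split the discrete constraint value via the triangle inequality
\begin{equation*}
g_{kh}(\nu_\gamma,q_\gamma) = g(\nu_\gamma,q_\gamma) + \bigl(g_{kh}(\nu_\gamma,q_\gamma) - g(\nu_\gamma,q_\gamma)\bigr),
\end{equation*}
and bound the second term by $c\abs{\log k}(k+h^2)$ using~\eqref{eq:errorEstimatesG1} together with the uniform bound on $\bar{q}$ (and uniformity of the estimate for $\nu_\gamma \in [\bar{\nu}/2, 3\bar{\nu}/2]$). For the first term I perform a second-order Taylor expansion of $g$ at $\bar{\chi} = (\bar{\nu},\bar{q})$, noting $g(\bar{\chi}) = 0$:
\begin{equation*}
g(\nu_\gamma,q_\gamma) = g'(\bar{\chi})(\gamma, \ProjDiscControl \bar{q} - \bar{q}) + R = -\bar{\eta}\,\gamma + g'(\bar{\chi})(0, \ProjDiscControl \bar{q} - \bar{q}) + R,
\end{equation*}
where the linearized-Slater identity~\eqref{eq:definition_linearized_slater_condition_general} was used. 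The linear $q$-term is bounded by $c\,\sigma(k,h)\,\norm{\bar{q}}_\sigma$ via \cref{prop:stability_g} and~\eqref{eq:estimate_projection_discrete_controls}, and the remainder $R$ by $c(\gamma^2 + \sigma(k,h)^2)$ through the uniform bound on $g''$ from \cref{prop:stability_g}.

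Combining everything,
\begin{equation*}
g_{kh}(\nu_\gamma,q_\gamma) \leq -\bar{\eta}\,\gamma + c\bigl(\sigma(k,h) + \abs{\log k}(k+h^2)\bigr) + c\bigl(\gamma^2 + \sigma(k,h)^2\bigr).
\end{equation*}
The natural choice is $\gamma = \gamma(k,h) \ldef (2c/\bar{\eta})\bigl(\sigma(k,h) + \abs{\log k}(k+h^2)\bigr)$, so that the linear part $-\bar{\eta}\gamma$ dominates the first-order perturbation, and for $k,h$ small enough the quadratic remainder is absorbed into $-(\bar{\eta}/2)\gamma$. This yields $g_{kh}(\nu_\gamma,q_\gamma) \leq 0$, i.e.\ feasibility with respect to the PDE constraint.

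Finally, the distance estimate follows by assembling
\begin{equation*}
\abs{\nu_\gamma - \bar{\nu}} + \norm{q_\gamma - \bar{q}}_{L^2(I\times\omega)}
 = \gamma + \norm{\ProjDiscControl \bar{q} - \bar{q}}_{L^2(I\times\omega)}
 \leq c\bigl(\sigma(k,h) + \abs{\log k}(k+h^2)\bigr),
\end{equation*}
which, for $k,h$ small, is automatically below $\rho$, so that the localization constraint of~\eqref{PkhLocal} is satisfied. The only delicate point I anticipate is the Taylor remainder bookkeeping: one has to use the uniform second-order bound from \cref{prop:stability_g} in a neighborhood of $\bar{\chi}$ where $\nu \in [\bar{\nu}/2,3\bar{\nu}/2]$ and $q \in \Qad(0,1)$, and to ensure $\gamma \leq \bar{\nu}/2$, but these are all achievable by shrinking $k,h$.
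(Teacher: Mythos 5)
Your proposal is correct and follows essentially the same route as the paper: the same candidate $(\bar{\nu}+\gamma,\ProjDiscControl\bar{q})$, the same ingredients (linearized Slater condition, the discretization error estimate~\eqref{eq:errorEstimatesG1}, the uniform bounds on $g'$ and $g''$ from \cref{prop:stability_g}), and the same tuning $\gamma \sim \sigma(k,h)+\abs{\log k}(k+h^2)$. The only difference is cosmetic: the paper Taylor-expands the discrete functional $g_{kh}$ at $(\bar{\nu},\ProjDiscControl\bar{q})$ in the $\nu$-direction and absorbs the $q$-perturbation via Lipschitz continuity of $g_{kh}$, whereas you expand the continuous $g$ at $(\bar{\nu},\bar{q})$ in both directions and pass to $g_{kh}$ at the perturbed point; both orderings of these steps yield the same bounds.
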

\begin{proof}
	The proof follows the one of \cite[Lemma~4.2]{Neitzel2015}. 
	We abbreviate $\bar{\chi} = (\bar{\nu},\bar{q})$. Moreover, for $\gamma > 0$
    to be determined in the course of the proof we set
	\begin{equation*}
	\chi_\gamma \ldef \ProjDiscControl \breve{\chi}^\gamma
        = (\bar{\nu}+\gamma,\ProjDiscControl \bar{q}) \mbox{.}
	\end{equation*}
	Employing the supposition~\eqref{eq:estimate_projection_discrete_controls} on
        $\ProjDiscControl$ we obtain
	\begin{equation}\label{eq:auxiliarySequenceGammaP1}
	\norm{\chi_\gamma - \bar{\chi}} \leq \gamma + \sigma(k,h)\norm{\bar{q}}_\sigma\mbox{.}
	\end{equation}
	Moreover, using Taylor expansion of $g_{kh}$ at $\ProjDiscControl\bar{\chi}$ we
        find for some \(\chi_\zeta\) that
	\begin{align*}
	g_{kh}(\chi_\gamma)
          &= g_{kh}(\ProjDiscControl\bar{\chi})
            + \gamma \, g_{kh}'(\ProjDiscControl\bar{\chi})(1,0)
	+ \frac{\gamma^2}{2} \, g_{kh}''(\chi_\zeta)[1,0]^2\mbox{.}
	\end{align*}
        Using the triangle inequality we estimate the first term by
	\begin{align*}
	g_{kh}(\ProjDiscControl\bar{\chi})
        &\leq g(\bar{\chi}) + \abs{g(\bar{\chi}) - g_{kh}(\bar{\chi})}
        + c\norm{\ProjDiscControl\bar{\chi} - \bar{\chi}} \\
        &\leq c_1(\abs{\log k} (k + h^2) + \sigma(k,h))
        \rdef \delta_1(k,h)
	\end{align*}
        with Lipschitz continuity of $g_{kh}$ and \cref{prop:errorEstimatesG}.
        For the second term, we estimate
        \[
          g_{kh}'(\ProjDiscControl\bar{\chi})(1,0)
          \leq g'(\bar{\chi})(1,0) + c_2\left(\abs{\log k} (k + h^2) + \sigma(k,h)\right)
          \leq -\bar{\eta} + \delta_2(k,h)\mbox{,}
        \]
        using \cref{assumption:linearized_slater}, and \(g'(\bar{\chi})(1,0)
        = \partial_\nu g(\bar{\chi})\).
	Finally, for the third term, we find
	\(g_{kh}''(\chi_\zeta)[\gamma,0]^2 \leq c_3 \gamma^2\), using~\eqref{eq:stabilityEstimateGkh2}.	
	Collecting the estimates, we have
	\begin{equation*}
	g_{kh}(\chi_{\gamma}) \leq \delta_1(k,h)
        - \gamma\left(\bar{\eta} - \delta_2(k,h) - c_3 \gamma\right)\mbox{.}
	\end{equation*}
	Note that the first component of $\chi_\gamma$ is bounded below by
        $\bar{\nu}$ and bounded above by $\bar{\nu}+1$, so that all constants of
        \cref{prop:stabilityEstimateGkh,prop:errorEstimatesG} can be chosen to be
        independent of $\chi_\gamma$. Taking 
	\begin{equation*}
          \gamma = \frac{3 \delta_1(k,h)}{\bar{\eta}} \leq \frac{\bar{\eta}}{3c_3}
          \quad\text{and}\quad
          \delta_2(k,h) \leq \frac{\bar{\eta}}{3}
	\end{equation*}
        for $k, h$ sufficiently small, we obtain $g_{kh}(\chi_\gamma) \leq 0$. From the 
        definition of $\gamma$ we further deduce $\gamma = \gamma(k,h) =
        \mathcal{O}(\sigma(k,h) + \abs{\log k} (k + h^2))$. Moreover, it holds
        $\norm{\chi_\gamma - \bar{\chi}} \leq \rho$ for $\gamma,k,h$ sufficiently small
        due to~\eqref{eq:auxiliarySequenceGammaP1}. In summary, we have that the sequence
        $\chi_{\gamma}$ is feasible for~\eqref{PkhLocal}.
\end{proof}
In particular, \cref{prop:auxiliarySequenceGamma} guarantees that for $h$,  $k$, and
$\rho$ sufficiently small, the set of admissible controls of the discrete
problem~\eqref{PkhLocal} is nonempty. 
Hence, by standard arguments we obtain well-posedeness of the localized discrete problem.
\begin{corollary}
	Let $h$,  $k$, and $\rho$ be sufficiently small. 
	Then there exists a solution $\bar{\chi}_{kh}^\rho = (\bar{\nu}^\rho_{kh},\bar{q}^\rho_{kh}) \in \Rplus\times\Qsigmaad(0,1)$ 
	to~\eqref{PkhLocal}.
\end{corollary}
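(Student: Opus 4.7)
The plan is to invoke the direct method of the calculus of variations, where the essential preparations have already been carried out in the preceding proposition.

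First, by \cref{prop:auxiliarySequenceGamma}, the feasible set of~\eqref{PkhLocal} is nonempty provided $h$, $k$, and $\rho$ are sufficiently small. Since $j \geq 0$, we may choose an infimizing sequence $\{(\nu_n,q_n)\}_n \subset \Rplus \times \Qsigmaad(0,1)$ satisfying the constraints of~\eqref{PkhLocal}. The localization $\prodnorm{\nu_n - \bar{\nu}, q_n - \bar{q}} \leq \rho$ (combined with the standing restriction $\rho \leq \bar{\nu}/2$) shows that $\nu_n \in [\bar{\nu}/2,\, 3\bar{\nu}/2]$ and that $\{q_n\}_n$ is bounded in $\Lcontrol{2}$. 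Hence, after extraction of a subsequence, we may assume $\nu_n \to \bar{\nu}^\rho_{kh} \in [\bar{\nu}/2,\,3\bar{\nu}/2]$ and $q_n \rightharpoonup \bar{q}^\rho_{kh}$ weakly in $\Lcontrol{2}$.

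Second, I would verify that the feasible set of~\eqref{PkhLocal} is weakly sequentially closed. The set $\Qsigmaad(0,1)$ is convex and closed in $\Lcontrol{2}$, hence weakly closed, and the same holds for the localization ball. For the constraint $g_{kh}(\nu,q) \leq 0$, the crucial observation is that $S_{kh}$ takes values in the \emph{finite-dimensional} space $\Xkh$: the map $q \mapsto S_{kh}(\nu,q)$ is affine and continuous, and $\nu \mapsto S_{kh}(\nu,q)$ depends smoothly on $\nu \in [\bar{\nu}/2,\,3\bar{\nu}/2]$. Consequently, the weak convergence $q_n \rightharpoonup \bar{q}^\rho_{kh}$ in $\Lcontrol{2}$ together with $\nu_n \to \bar{\nu}^\rho_{kh}$ yields strong convergence of $S_{kh}(\nu_n,q_n)$ in $\Xkh$ (weak and strong convergence coincide in finite dimensions), and thus $g_{kh}(\nu_n,q_n) \to g_{kh}(\bar{\nu}^\rho_{kh},\bar{q}^\rho_{kh})$. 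In particular $g_{kh}(\bar{\nu}^\rho_{kh},\bar{q}^\rho_{kh}) \leq 0$.

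Finally, I would check weak lower semicontinuity of the objective. Since
\[
j(\nu,q) = \nu + \frac{\alpha\nu}{2}\norm{q}^2_{\Lcontrol{2}},
\]
the strong convergence $\nu_n \to \bar{\nu}^\rho_{kh} > 0$ together with weak lower semicontinuity of the squared $\Lcontrol{2}$-norm yields $j(\bar{\nu}^\rho_{kh},\bar{q}^\rho_{kh}) \leq \liminf_n j(\nu_n,q_n)$. Hence $(\bar{\nu}^\rho_{kh},\bar{q}^\rho_{kh})$ attains the infimum, which proves existence of a solution to~\eqref{PkhLocal}. There is no genuine obstacle here; the only point requiring a moment of care is the weak-to-strong passage for $S_{kh}$, which is handled by finite-dimensionality of $\Xkh$.
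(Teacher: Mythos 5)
Your proposal is correct and follows exactly the route the paper intends: the paper derives nonemptiness of the feasible set from \cref{prop:auxiliarySequenceGamma} and then appeals to ``standard arguments'' (the direct method) for existence, which is precisely what you carry out. The one point deserving care --- weak sequential closedness of the level set $\{g_{kh}\leq 0\}$ --- is handled correctly by your observation that $S_{kh}$ maps affinely and continuously into the finite-dimensional space $\Xkh$, so weak convergence of the controls upgrades to strong convergence of the discrete states.
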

First, we verify that the linearized Slater condition holds at $\bar{\chi}^\rho_{kh}$ for the discrete problem.
\begin{proposition}\label{prop:slaterPointDiscrete}
	For $k$, $h$, and $\rho$ sufficiently small we have
	\begin{equation*}
	\partial_\nu g_{kh}(\bar{\chi}^\rho_{kh}) \leq -\bar{\eta}/2 < 0\mbox{.}
	\end{equation*}
\end{proposition}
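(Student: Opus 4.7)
The plan is to compare $\partial_\nu g_{kh}(\bar{\chi}^\rho_{kh})$ to $\partial_\nu g(\bar{\chi})$ by inserting two intermediate terms, controlling one by the discretization error estimate for $g'$ and the other by the Lipschitz continuity of $g'$, and then exploiting that the localization parameter $\rho$ is at our disposal.

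Concretely, I would first write
\begin{equation*}
\partial_\nu g_{kh}(\bar{\chi}^\rho_{kh}) = \partial_\nu g(\bar{\chi})
+ \bigl[\partial_\nu g(\bar{\chi}^\rho_{kh}) - \partial_\nu g(\bar{\chi})\bigr]
+ \bigl[\partial_\nu g_{kh}(\bar{\chi}^\rho_{kh}) - \partial_\nu g(\bar{\chi}^\rho_{kh})\bigr].
\end{equation*}
By the linearized Slater condition~\eqref{eq:definition_linearized_slater_condition} the first term equals $-\bar{\eta}$. Since the localization in~\eqref{PkhLocal} combined with the standing assumption $\rho \leq \bar{\nu}/2$ forces $\bar{\nu}^\rho_{kh} \in [\bar{\nu}/2, 3\bar{\nu}/2]$ and $\bar{q}^\rho_{kh} \in \Qad(0,1)$, both \cref{prop:stability_g} and \cref{prop:errorEstimatesG} apply with constants that do not depend on the discrete solution.

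For the second bracket I would apply the Lipschitz estimate from \cref{prop:stability_g} to $(\delta\nu, \delta q) = (1,0)$, yielding
\begin{equation*}
\bigl|\partial_\nu g(\bar{\chi}^\rho_{kh}) - \partial_\nu g(\bar{\chi})\bigr|
\leq c\,\prodnorm{\bar{\nu}^\rho_{kh}-\bar{\nu},\, \bar{q}^\rho_{kh}-\bar{q}} \leq c\rho.
\end{equation*}
For the third bracket I would apply \cref{prop:errorEstimatesG}, in the form of the estimate~\eqref{eq:errorEstimatesG2}, again with $(\delta\nu,\delta q) = (1,0)$; noting that $\bar{q}^\rho_{kh} \in \Qad(0,1)$ and $u_0 \in H^1_0$ (by \cref{assumption:Omega}), this gives
\begin{equation*}
\bigl|\partial_\nu g_{kh}(\bar{\chi}^\rho_{kh}) - \partial_\nu g(\bar{\chi}^\rho_{kh})\bigr|
\leq c\,\abs{\log k}\,(k+h^2).
\end{equation*}

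Combining the three pieces yields
\begin{equation*}
\partial_\nu g_{kh}(\bar{\chi}^\rho_{kh}) \leq -\bar{\eta} + c\rho + c\,\abs{\log k}\,(k+h^2).
\end{equation*}
First choosing $\rho$ so small that $c\rho \leq \bar{\eta}/4$, and afterwards choosing $k,h$ small enough that $c\,\abs{\log k}\,(k+h^2) \leq \bar{\eta}/4$, completes the proof. The only subtle point is making sure all error constants stay uniform; this is taken care of by the a priori bounds $\bar{\nu}/2 \leq \bar{\nu}^\rho_{kh} \leq 3\bar{\nu}/2$ from the localization together with $\bar{q}^\rho_{kh} \in \Qad(0,1)$, so there is no real obstacle beyond this bookkeeping.
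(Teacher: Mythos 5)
Your proof is correct and follows essentially the same route as the paper: the same three-term decomposition via the triangle inequality, with the continuous-to-continuous difference controlled by the Lipschitz estimate of \cref{prop:stability_g} (giving the $c\rho$ term) and the discrete-to-continuous difference controlled by~\eqref{eq:errorEstimatesG2}, followed by choosing $\rho$ and then $k,h$ small. The uniformity discussion via $\bar{\nu}/2 \leq \bar{\nu}^\rho_{kh} \leq 3\bar{\nu}/2$ and $\bar{q}^\rho_{kh}\in\Qad(0,1)$ matches the paper's bookkeeping exactly.
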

\begin{proof}
  This follows with \cref{assumption:linearized_slater} and
  \begin{equation*}
    \partial_\nu g_{kh}(\bar{\chi}^\rho_{kh})
    \leq \partial_\nu g(\bar{\chi}) + \abs{\partial_\nu g_{kh}(\bar{\chi}^\rho_{kh})
      - \partial_\nu g(\bar{\chi}^\rho_{kh})} + \abs{\partial_\nu g(\bar{\chi}^\rho_{kh})
      - \partial_\nu g(\bar{\chi})}\mbox{,}
  \end{equation*}
  using the error estimate \eqref{eq:errorEstimatesG2}, the
  Lipschitz-continuity of \(\partial_\nu g_{kh}\) from \eqref{eq:stabilityEstimateGkh1},
  and the fact that \(\norm{\bar{\chi}^\rho_{kh} - \bar{\chi}} \leq \rho\) by the construction
  of~\eqref{PkhLocal}.
\end{proof}

Last, we construct a sequence that is feasible for~\eqref{Pt}
and its distance to $(\bar{\nu}^\rho_{kh},\bar{q}^\rho_{kh})$ converges at the rate $\abs{\log k} (k + h^2)$.
\begin{proposition}\label{prop:auxiliarySequenceTau}
	Let $k$, $h$, and $\rho$ be sufficiently small. Moreover, let
        $(\bar{\nu},\bar{q})$ be a locally optimal solution of~\eqref{Pt} and let
        $(\bar{\nu}^\rho_{kh},\bar{q}^\rho_{kh})$ be any globally optimal control
        of~\eqref{PkhLocal}. Then there exists a sequence $\{\nu_\tau\}_{\tau >
          0}$ with $\tau = \tau(k,h)$ 
        such that $(\nu_\tau,\bar{q}^\rho_{kh})$
        is feasible for~\eqref{Pt} and
        that fulfill
	\begin{equation*}
	\abs{\nu_{\tau}-\bar{\nu}^\rho_{kh}}
	\leq c\abs{\log k} (k + h^2)\mbox{.}
	\end{equation*}
\end{proposition}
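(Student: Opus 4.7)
The plan is to mirror the construction in \cref{prop:auxiliarySequenceGamma}, but in the opposite direction: rather than projecting a continuous Slater direction into the discrete admissible set, we exploit the nearly-preserved linearized Slater condition at $\bar{\chi}^\rho_{kh}$ (\cref{prop:slaterPointDiscrete}) to perturb only the time variable and recover feasibility for the continuous problem. Since the control $\bar{q}^\rho_{kh}$ already lies in $\Qad(0,1)$, no projection is required, and the construction reduces to choosing the right $\tau > 0$.

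Concretely, I would define $\chi_\tau \ldef (\bar{\nu}^\rho_{kh} + \tau,\, \bar{q}^\rho_{kh})$ and perform a second order Taylor expansion of $g$ in the $\nu$-direction:
\[
g(\chi_\tau) = g(\bar{\chi}^\rho_{kh}) + \tau\, \partial_\nu g(\bar{\chi}^\rho_{kh}) + \tfrac{\tau^2}{2}\,\partial_\nu^2 g(\chi_\zeta),
\]
for an intermediate $\chi_\zeta$. Each term is then bounded. For the zeroth-order term, use feasibility of $\bar{\chi}^\rho_{kh}$ for \eqref{PkhLocal} (i.e.\ $g_{kh}(\bar{\chi}^\rho_{kh}) \leq 0$) together with \cref{prop:errorEstimatesG} to get
\[
g(\bar{\chi}^\rho_{kh}) \leq \abs{g(\bar{\chi}^\rho_{kh}) - g_{kh}(\bar{\chi}^\rho_{kh})} \leq c \abs{\log k}(k + h^2) \rdef \delta_1(k,h).
\]
For the first-order term, combine \cref{prop:slaterPointDiscrete} with the derivative estimate \eqref{eq:errorEstimatesG2} to obtain $\partial_\nu g(\bar{\chi}^\rho_{kh}) \leq -\bar{\eta}/2 + c\abs{\log k}(k+h^2) \leq -\bar{\eta}/3$ for $k,h$ small. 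For the second-order term, \cref{prop:stability_g} yields $\abs{\partial_\nu^2 g(\chi_\zeta)} \leq c_3$ uniformly.

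Combining, $g(\chi_\tau) \leq \delta_1(k,h) - (\bar{\eta}/3)\tau + (c_3/2)\tau^2$. Setting $\tau = 6\delta_1(k,h)/\bar{\eta}$ (which tends to zero, so that for small $k,h$ one has $(c_3/2)\tau \leq \bar{\eta}/6$) gives $g(\chi_\tau) \leq \delta_1(k,h) - (\bar{\eta}/6)\tau = 0$. Then $\nu_\tau \ldef \bar{\nu}^\rho_{kh} + \tau > 0$ is automatic from $\bar{\nu}^\rho_{kh} \geq \bar{\nu}/2$, and $(\nu_\tau,\bar{q}^\rho_{kh})$ is feasible for~\eqref{Pt} with $\abs{\nu_\tau - \bar{\nu}^\rho_{kh}} = \tau = \mathcal{O}(\abs{\log k}(k + h^2))$.

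The only subtle point, which parallels the situation in \cref{prop:auxiliarySequenceGamma}, is guaranteeing that the constants in \cref{prop:errorEstimatesG} and \cref{prop:stability_g} are chosen independently of $\bar{\chi}^\rho_{kh}$. This is where the standing localization hypothesis $\rho \leq \bar{\nu}/2$ enters: it confines $\bar{\nu}^\rho_{kh}$ (and hence every $\chi_\zeta$ arising in the Taylor expansion for small $\tau$) to the fixed compact interval $[\bar{\nu}/2,\, 3\bar{\nu}/2]$, bounded away from zero, so that the abstract stability bounds apply uniformly. I do not expect any other obstacle; the argument is essentially a mirror image of the construction used to produce the $\gamma$-sequence.
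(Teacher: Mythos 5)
Your proposal is correct and follows essentially the same route as the paper: the paper defines $\chi_\tau = (\bar{\nu}^\rho_{kh}+\tau,\bar{q}^\rho_{kh})$ and states that the argument proceeds as in \cref{prop:auxiliarySequenceGamma} with the roles of $\bar{\chi}$ and $\bar{\chi}^\rho_{kh}$ (and of $g$ and $g_{kh}$) interchanged, which is precisely the Taylor expansion, discrete Slater bound, and choice $\tau = \mathcal{O}(\abs{\log k}(k+h^2))$ that you carry out explicitly. Your remarks on the uniformity of the constants via the localization $\rho \leq \bar{\nu}/2$ match the paper's standing convention, and no projection step is needed since $\Qsigmaad(0,1)\subset\Qad(0,1)$, exactly as you observe.
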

\begin{proof}
We set
\[
  \chi_\tau = (\nu_\tau, q_\tau) = (\bar{\nu}^\rho_{kh} + \tau, \bar{q}^\rho_{kh})
\]
for some \(\tau \in (0,1]\) to be determined later.
Now, the proof proceeds along the lines of the proof of \cref{prop:auxiliarySequenceGamma},
interchanging the roles of \(\bar{\chi}\) and \(\bar{\chi}_{kh}\) and \(g\) and \(g_{kh}\).
\end{proof}

\subsubsection{A priori error estimates of controls}
Two-way insertion of the auxiliary sequences constructed in the preceding subsection,
combined with the quadratic growth condition, yields a first convergence result.
\begin{proposition}\label{prop:errorEstimatesControlLocal}
	Let $(\bar{\nu},\bar{q})$ be a local solution to~\eqref{Pt}. Moreover, let $\{(k,h)\}$ be a sequence of positive mesh sizes converging to zero and $\{(\bar{\nu}^\rho_{kh},\bar{q}^\rho_{kh})\}_{k,h>0}$ be a sequence of globally optimal solutions to~\eqref{PkhLocal} for $\rho > 0$ sufficiently small such that the quadratic growth condition~\eqref{eq:quadraticGrowthCondition} as well as \cref{prop:auxiliarySequenceTau,prop:auxiliarySequenceGamma} hold. Then $(\bar{\nu}^\rho_{kh},\bar{q}^\rho_{kh})$ converges to $(\bar{\nu},\bar{q})$ and
	\begin{equation*}
	\abs{\bar{\nu} - \bar{\nu}_{kh}^\rho} + \norm{\bar{q} - \bar{q}_{kh}^\rho}_{L^2(I\times\omega)}
	\leq c\left(\sigma(k,h)^{1/2} + \abs{\log k}^{1/2} (k^{1/2} + h)\right)\mbox{.}
	\end{equation*}
\end{proposition}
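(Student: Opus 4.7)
The plan is to combine the two auxiliary sequences with the quadratic growth condition~\eqref{eq:quadraticGrowthCondition} via a standard two-way insertion argument, much in the spirit of \cite{Casas2012d,Neitzel2015}. Set $\bar{\chi} = (\bar{\nu},\bar{q})$ and $\bar{\chi}^\rho_{kh} = (\bar{\nu}^\rho_{kh},\bar{q}^\rho_{kh})$, and denote by $\chi_\gamma = (\nu_\gamma,q_\gamma)$ the element from \cref{prop:auxiliarySequenceGamma}, feasible for~\eqref{PkhLocal}, and by $\chi_\tau = (\nu_\tau,\bar{q}^\rho_{kh})$ the element from \cref{prop:auxiliarySequenceTau}, feasible for~\eqref{Pt}. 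The first observation to record is that both constructions give $\prodnorm{\chi_\gamma - \bar{\chi}}$ and $\prodnorm{\chi_\tau - \bar{\chi}^\rho_{kh}}$ of order $\sigma(k,h) + \abs{\log k}(k+h^2)$; in particular, choosing $\rho$ smaller than $\varepsilon/2$ (with $\varepsilon$ the radius from \cref{thm:secondOrderSufficient}) and $k,h$ small enough, $\chi_\tau$ lies in the quadratic growth neighborhood of $\bar{\chi}$.

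Now I would apply~\eqref{eq:quadraticGrowthCondition} to $\chi_\tau$ to obtain
\begin{equation*}
\tfrac{\kappa}{2}\bigl(\abs{\nu_\tau - \bar{\nu}}^2 + \norm{\bar{q}^\rho_{kh} - \bar{q}}_{L^2(I\times\omega)}^2\bigr) \leq j(\chi_\tau) - j(\bar{\chi}).
\end{equation*}
Next I would insert $j(\bar{\chi}^\rho_{kh})$ and $j(\chi_\gamma)$, yielding
\begin{equation*}
 j(\chi_\tau) - j(\bar{\chi})
 = \bigl(j(\chi_\tau) - j(\bar{\chi}^\rho_{kh})\bigr) + \bigl(j(\bar{\chi}^\rho_{kh}) - j(\chi_\gamma)\bigr) + \bigl(j(\chi_\gamma) - j(\bar{\chi})\bigr).
\end{equation*}
The first bracket equals $\tau\bigl(1 + \tfrac{\alpha}{2}\norm{\bar{q}^\rho_{kh}}_{L^2(I\times\omega)}^2\bigr)$, hence is of order $\tau = \mathcal{O}(\abs{\log k}(k+h^2))$ by \cref{prop:auxiliarySequenceTau} together with the uniform bound $\bar{q}^\rho_{kh}\in\Qad$. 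The second bracket is non-positive by global optimality of $\bar{\chi}^\rho_{kh}$ for the localized discrete problem~\eqref{PkhLocal}, for which $\chi_\gamma$ is feasible thanks to \cref{prop:auxiliarySequenceGamma}. For the third bracket, local Lipschitz continuity of $j$ together with the bound on $\prodnorm{\chi_\gamma - \bar{\chi}}$ from \cref{prop:auxiliarySequenceGamma} yields
$$\abs{j(\chi_\gamma) - j(\bar{\chi})} \leq c\bigl(\sigma(k,h) + \abs{\log k}(k+h^2)\bigr).$$

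Combining these estimates gives
\begin{equation*}
\abs{\nu_\tau - \bar{\nu}}^2 + \norm{\bar{q}^\rho_{kh} - \bar{q}}_{L^2(I\times\omega)}^2 \leq c\bigl(\sigma(k,h) + \abs{\log k}(k+h^2)\bigr),
\end{equation*}
so taking square roots and absorbing the factor into the constants yields the desired bound for $\norm{\bar{q}^\rho_{kh} - \bar{q}}_{L^2(I\times\omega)}$ and for $\abs{\nu_\tau - \bar{\nu}}$. Finally the triangle inequality $\abs{\bar{\nu} - \bar{\nu}^\rho_{kh}} \leq \abs{\bar{\nu} - \nu_\tau} + \abs{\nu_\tau - \bar{\nu}^\rho_{kh}}$, together with the rate $\abs{\nu_\tau - \bar{\nu}^\rho_{kh}} \leq c\abs{\log k}(k+h^2)$ from \cref{prop:auxiliarySequenceTau} (which is absorbed into $\abs{\log k}^{1/2}(k^{1/2}+h)$ for small $k,h$), concludes the estimate. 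Convergence $\bar{\chi}^\rho_{kh} \to \bar{\chi}$ follows from the estimate as $\sigma(k,h), k, h \to 0$.

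The main conceptual obstacle is the standard issue that the square root appears because we only control the objective difference, not directly the derivative of the Lagrangian; this is precisely why the present bound is suboptimal and must be improved later in \cref{lemma:ImprovedErrorRate}. Technically, the only delicate point is keeping track of the parameters: one must first fix $\rho$ small enough so that $\bar{\chi}^\rho_{kh}$ (which satisfies the localization $\prodnorm{\bar{\chi}^\rho_{kh}-\bar{\chi}}\leq\rho$) and $\chi_\tau$ remain in the region of validity of both the quadratic growth condition and of the uniform constants in \cref{prop:stabilityEstimateGkh,prop:errorEstimatesG} (which requires $\bar{\nu}^\rho_{kh}$ uniformly bounded away from zero, ensured by $\rho \leq \bar{\nu}/2$), and only then let $k,h\to 0$.
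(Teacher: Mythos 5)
Your proposal is correct and follows essentially the same two-way insertion argument as the paper: quadratic growth applied to the feasible point $(\nu_\tau,\bar{q}^\rho_{kh})$, insertion of $j(\bar{\chi}^\rho_{kh})$ and $j(\chi_\gamma)$, dropping the middle term by discrete optimality, and estimating the remaining two differences before taking square roots. The only cosmetic difference is that you invoke local Lipschitz continuity of $j$ for the term $j(\chi_\gamma)-j(\bar{\chi})$ where the paper expands it explicitly, and you make the final triangle inequality for $\abs{\bar{\nu}-\bar{\nu}^\rho_{kh}}$ explicit, which the paper leaves implicit.
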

\begin{proof}
	Because $(\nu_\tau, \bar{q}_{kh}^\rho)$ is feasible for~\eqref{Pt},
	we may use the quadratic growth condition~\eqref{eq:quadraticGrowthCondition}
	to estimate
	\begin{multline*}
	\frac{\kappa}{2} \prodnorm{\bar{\nu} - \nu_\tau, \bar{q} - \bar{q}_{kh}^\rho}^2 \leq j(\nu_\tau, \bar{q}_{kh}^\rho) - j(\bar{\nu},\bar{q})\\
	\leq j(\nu_\tau, \bar{q}_{kh}^\rho) - j(\bar{\nu}_{kh}^\rho, \bar{q}_{kh}^\rho) +  j(\bar{\nu}_{kh}^\rho, \bar{q}_{kh}^\rho) - j(\nu_\gamma,q_\gamma) + j(\nu_\gamma,q_\gamma) - j(\bar{\nu},\bar{q})\\
	\leq j(\nu_\tau, \bar{q}_{kh}^\rho) - j(\bar{\nu}_{kh}^\rho, \bar{q}_{kh}^\rho) + j(\nu_\gamma,q_\gamma) - j(\bar{\nu},\bar{q})\mbox{,}
	\end{multline*}
	where the last inequality follows from optimality of the pair $(\bar{\nu}_{kh}^\rho, \bar{q}_{kh}^\rho)$ and feasibility of $(\nu_\gamma,q_\gamma)$ for~\eqref{PkhLocal}.		
	Then, we observe
	\begin{align*}
	j(\nu_\tau, \bar{q}_{kh}^\rho) - j(\bar{\nu}_{kh}^\rho, \bar{q}_{kh}^\rho) &=
	(\nu_{\tau}-\bar{\nu}_{kh}^\rho)\left(1 + \frac{\alpha}{2}\norm{\bar{q}_{kh}^\rho}_{\Lcontrol{2}}^2\right)\\
	&\leq c\left(1 + \frac{\alpha}{2}\right)\abs{\log k} (k + h^2)
	\end{align*}
	due to \cref{prop:auxiliarySequenceTau} and boundedness of $\bar{q}_{kh}^\rho$. Similarly, 
	\begin{align*}
	j(\nu_\gamma, q_\gamma) - j(\bar{\nu}, \bar{q}) &=
	(\nu_{\gamma}-\bar{\nu})\left(1 + \frac{\alpha}{2}\norm{q_\gamma}_{\Lcontrol{2}}^2\right)\\
	&\quad + \bar{\nu}\frac{\alpha}{2}\norm{q_\gamma + \bar{q}}_{\Lcontrol{2}} \norm{q_\gamma - \bar{q}}_{\Lcontrol{2}}\\
	&\leq c\left(1 + \frac{\alpha}{2}\right)(\sigma(k,h) + \abs{\log k} (k + h^2))
	\end{align*}
	employing \cref{prop:auxiliarySequenceGamma}. Taking square roots yields the assertion.
\end{proof}

\begin{lemma}\label{lemma:errorEstimatesControl_suboptimal}
	Let $(\bar{\nu},\bar{q})$ be a local solution to~\eqref{Pt} satisfying the quadratic growth condition~\eqref{eq:quadraticGrowthCondition} and $\{(k,h)\}$ be a sequence of positive mesh sizes converging to zero. There is a sequence $\{(\bar{\nu}_{kh},\bar{q}_{kh})\}_{k,h>0}$ of local solutions to problem~\eqref{Pkh} such that
	\begin{equation*}
	\abs{\bar{\nu} - \bar{\nu}_{kh}} + \norm{\bar{q} - \bar{q}_{kh}}_{L^2(I\times\omega)} \leq c\left(\sigma(k,h)^{1/2} + \abs{\log k}^{1/2} (k^{1/2} + h)\right)\mbox{,}
	\end{equation*}
	where $c > 0$ is independent of $k$, $h$, $\bar{\nu}_{kh}$, and $\bar{q}_{kh}$.
	Moreover, there exists a Lagrange multiplier $\bar{\mu}_{kh} > 0$ such that the
        following optimality system is satisfied:
	\begin{align}
	\int_0^1 1 + \frac{\alpha}{2}\norm{\bar{q}_{kh}}_{\LcontrolSpatial}^2 + \pair{\ControlOp\bar{q}_{kh} + \Lap_h\bar{u}_{kh}, \bar{z}_{kh}}\D{t} &= 0\mbox{,}\label{eq:opt_cond_hamiltonianConstant_kh}\\
	\int_0^{1} \bar{\nu}_{kh}\pair{\alpha \bar{q}_{kh}+\ControlOp^*\bar{z}_{kh}, q - \bar{q}_{kh}} \D{t} &\geq 0\mbox{,}\quad q \in\Qsigmaad(0,1)\mbox{,}\label{eq:opt_cond_variationalInequality_kh}\\
	G(\bar{u}_{kh}(1)) &= 0\mbox{,}\label{eq:opt_cond_feasibility_kh}
	\end{align}
	where $\bar{u}_{kh} = S_{kh}(\bar{\nu}_{kh},\bar{q}_{kh})$ and $\bar{z}_{kh} \in \Xkh$ is the discrete adjoint equation, i.e.
	\begin{equation*}
	\B(\bar{\nu}_{kh}, \varphi_{kh}, \bar{z}_{kh}) = \bar{\mu}_{kh}\inner{\bar{u}_{kh}(1) - u_d, \varphi_{kh}(1)}\mbox{,}\quad \varphi_{kh} \in \Xkh\mbox{.}
	\end{equation*}
\end{lemma}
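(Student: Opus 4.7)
The plan is to combine the previously established estimate for the \emph{localized} discrete problem~\eqref{PkhLocal} with a standard argument showing that the localization constraint becomes inactive for sufficiently small mesh parameters, and then derive the KKT system by means of the discrete linearized Slater condition from \cref{prop:slaterPointDiscrete}.

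First, I would fix \(\rho > 0\) small enough so that the quadratic growth condition \eqref{eq:quadraticGrowthCondition} is valid on the ball \(\overline{\mathcal{B}_\rho(\bar{\chi})}\), so that \(\rho \leq \bar\nu/2\) (keeping \(\bar{\nu}_{kh}^\rho\) uniformly bounded away from zero), and so that all previously invoked smallness conditions on \(\rho\) from \cref{prop:slaterPointDiscrete,prop:auxiliarySequenceGamma,prop:auxiliarySequenceTau} hold. For such \(\rho\), \cref{prop:errorEstimatesControlLocal} yields a globally optimal solution \(\bar{\chi}_{kh}^\rho = (\bar{\nu}_{kh}^\rho,\bar{q}_{kh}^\rho)\) of~\eqref{PkhLocal} together with the estimate
\begin{equation*}
\prodnorm{\bar{\nu} - \bar{\nu}_{kh}^\rho,\bar{q} - \bar{q}_{kh}^\rho}
\leq c\left(\sigma(k,h)^{1/2} + \abs{\log k}^{1/2}(k^{1/2} + h)\right) \rdef \varepsilon(k,h).
\end{equation*}
Since \(\varepsilon(k,h) \to 0\) as \(k,h \to 0\), there is \(k_0,h_0 > 0\) such that \(\varepsilon(k,h) < \rho/2\) for all \(k\leq k_0\) and \(h \leq h_0\). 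Hence, the localization constraint \(\prodnorm{\nu_{kh} - \bar{\nu},q_{kh} - \bar{q}} \leq \rho\) is strictly inactive at \(\bar{\chi}_{kh}^\rho\), and therefore \((\bar{\nu}_{kh},\bar{q}_{kh}) \ldef (\bar{\nu}_{kh}^\rho,\bar{q}_{kh}^\rho)\) is a local solution of the unlocalized discrete problem~\eqref{Pkh}, with the same error bound.

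It remains to derive the optimality system~\eqref{eq:opt_cond_hamiltonianConstant_kh}--\eqref{eq:opt_cond_feasibility_kh}. By \cref{prop:slaterPointDiscrete}, the discrete linearized Slater condition \(\partial_\nu g_{kh}(\bar{\chi}_{kh}) \leq -\bar\eta/2 < 0\) holds for \(k,h\) small enough. This constraint qualification allows us to argue exactly as in the proof of \cref{lemma:first_order_optcond} on the discrete level: one applies the exact penalization result from~\cite[Theorem~2.87, Proposition~3.111]{Bonnans2000} to \eqref{Pkh} to obtain a nonnegative multiplier \(\bar{\mu}_{kh} \in (0, c/\bar\eta\,]\), and then expresses stationarity of the discrete Lagrangian in terms of the discrete adjoint state \(\bar{z}_{kh}\in\Xkh\) as defined in the statement. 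The identity \(\partial_\nu\mathcal{L}_{kh}(\bar{\nu}_{kh},\bar{q}_{kh},\bar{\mu}_{kh}) = 0\) gives \eqref{eq:opt_cond_hamiltonianConstant_kh}, choosing \(\delta\nu = 0\) in the discrete variational inequality yields \eqref{eq:opt_cond_variationalInequality_kh}, and the activeness of the terminal constraint (which follows because an inactive \(g_{kh}\) would allow to shrink \(\bar{\nu}_{kh}\) and decrease \(j\)) gives \eqref{eq:opt_cond_feasibility_kh}. Strict positivity \(\bar{\mu}_{kh} > 0\) follows exactly as on the continuous level: \(\bar{\mu}_{kh} = 0\) would imply \(\bar{z}_{kh} = 0\), contradicting~\eqref{eq:opt_cond_hamiltonianConstant_kh}.

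The main conceptual obstacle is the step of promoting a solution of~\eqref{PkhLocal} into a local solution of~\eqref{Pkh}. This requires uniform control on the localization radius \(\rho\) independently of \(k,h\), which in turn needs the discrete Slater constant and the constants appearing in \cref{prop:stabilityEstimateGkh,prop:errorEstimatesG} to be uniform, and this is precisely why the localization was set up with \(\rho \leq \bar\nu/2\) so that \(\bar{\nu}_{kh}^\rho \in [\bar{\nu}/2, 3\bar{\nu}/2]\). Once this is in place, the remaining steps (KKT derivation, positivity of \(\bar{\mu}_{kh}\)) are entirely parallel to the continuous case.
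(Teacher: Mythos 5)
Your proposal is correct and follows essentially the same route as the paper: invoke \cref{prop:errorEstimatesControlLocal} for the localized problem, observe that the localization constraint becomes inactive for small $k,h$ so that the global solutions of~\eqref{PkhLocal} are local solutions of~\eqref{Pkh}, and use the discrete Slater condition of \cref{prop:slaterPointDiscrete} to obtain the KKT system. Your write-up merely spells out the details (uniformity of constants via $\rho\leq\bar{\nu}/2$, positivity of $\bar{\mu}_{kh}$) that the paper's two-sentence proof leaves implicit.
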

\begin{proof}
	The assertion follows from \cref{prop:errorEstimatesControlLocal}, noting that
        global solutions of~\eqref{PkhLocal} are local solutions of~\eqref{Pkh}, since the
        constraint $\prodnorm{\nu_{kh} - \bar{\nu}, q_{kh} - \bar{q}} \leq \rho$ is not
        active for sufficiently small $k$ and $h$, due to the convergence result of \cref{prop:errorEstimatesControlLocal}.
	Furthermore, \cref{prop:slaterPointDiscrete} guarantees the existence of KKT multipliers satisfying the optimality system stated above.
\end{proof}

\begin{proposition}\label{prop:convergenceMultipliers}
	Adopt the assumptions of \cref{lemma:errorEstimatesControl_suboptimal}. Then it holds
	\begin{equation}\label{eq:errorestimate_multiplier}
	\abs{\bar{\mu}-\bar{\mu}_{kh}} \leq c\left(\abs{\log k}(k + h^2) + \prodnorm{\bar{\nu} - \bar{\nu}_{kh}, \bar{q} - \bar{q}_{kh}}\right),
	\end{equation}
	with a constant $c > 0$ independent of $k$, $h$, $\bar{\nu}_{kh}$, $\bar{q}_{kh}$, and $\bar{\mu}_{kh}$.
\end{proposition}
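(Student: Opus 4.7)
The plan is to obtain $\bar\mu$ and $\bar\mu_{kh}$ as explicit quotients by combining the Hamiltonian-constancy optimality condition with the adjoint representation of $g'$, and then to estimate the difference of these two quotients term by term.

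First I would rewrite \eqref{eq:opt_cond_hamiltonianConstant} using \eqref{eq:terminal_constraint_adjoint} applied at the point $(\bar\nu,\bar q)$ with multiplier $\bar\mu$. Since $\int_0^1 \langle B\bar q+\Delta \bar u,\bar z\rangle\,dt$ is precisely $\bar\mu\,\partial_\nu g(\bar\nu,\bar q)$, the optimality condition~\eqref{eq:opt_cond_hamiltonianConstant} is equivalent to
\begin{equation*}
\bar\mu = -\frac{\int_0^1 1+\tfrac{\alpha}{2}\|\bar q(t)\|_{L^2(\omega)}^2 \,dt}{\partial_\nu g(\bar\nu,\bar q)} \eqqcolon -\frac{A}{B}.
\end{equation*}
The identical computation on the discrete level, starting from \eqref{eq:opt_cond_hamiltonianConstant_kh} and using the analogue of \eqref{eq:terminal_constraint_adjoint} (which holds by the same adjoint argument applied to $g_{kh}$), yields
\begin{equation*}
\bar\mu_{kh} = -\frac{\int_0^1 1+\tfrac{\alpha}{2}\|\bar q_{kh}(t)\|_{L^2(\omega)}^2 \,dt}{\partial_\nu g_{kh}(\bar\nu_{kh},\bar q_{kh})} \eqqcolon -\frac{A_{kh}}{B_{kh}}.
\end{equation*}

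Next I would estimate $|\bar\mu-\bar\mu_{kh}|$ via the standard bound
\begin{equation*}
\left|\frac{A}{B}-\frac{A_{kh}}{B_{kh}}\right| \le \frac{|A-A_{kh}|}{|B|} + \frac{|A_{kh}|\,|B-B_{kh}|}{|B|\,|B_{kh}|}.
\end{equation*}
The denominators are controlled from below uniformly in $k,h$: we have $|B|=\bar\eta>0$ by \cref{assumption:linearized_slater}, and $|B_{kh}|\ge \bar\eta/2$ by \cref{prop:slaterPointDiscrete} (observing that $(\bar\nu_{kh},\bar q_{kh})\in \overline{\mathcal B_\rho(\bar\nu,\bar q)}$ holds for $k,h$ small, by \cref{lemma:errorEstimatesControl_suboptimal}). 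The numerator differences are straightforward:
\begin{equation*}
|A-A_{kh}| \le \tfrac{\alpha}{2}\,\|\bar q+\bar q_{kh}\|_{L^2(I\times\omega)}\,\|\bar q-\bar q_{kh}\|_{L^2(I\times\omega)} \le c\,\|\bar q-\bar q_{kh}\|_{L^2(I\times\omega)},
\end{equation*}
using $\bar q,\bar q_{kh}\in\Qad(0,1)$, while $|A_{kh}|$ is bounded by the same token.

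For the denominator difference I would split
\begin{equation*}
|B-B_{kh}| \le \bigl|\partial_\nu g(\bar\nu,\bar q)-\partial_\nu g(\bar\nu_{kh},\bar q_{kh})\bigr| + \bigl|\partial_\nu g(\bar\nu_{kh},\bar q_{kh})-\partial_\nu g_{kh}(\bar\nu_{kh},\bar q_{kh})\bigr|.
\end{equation*}
The first contribution is bounded by $c\,\|(\bar\nu-\bar\nu_{kh},\bar q-\bar q_{kh})\|$ using the Lipschitz estimate from \cref{prop:stability_g} applied with $\delta\nu=1$, $\delta q=0$. The second is a pure discretization error for $g'$ evaluated in the direction $(1,0)$, and is bounded by $c\,|\log k|(k+h^2)$ via~\eqref{eq:errorEstimatesG2}, with constants uniform in $(\bar\nu_{kh},\bar q_{kh})$ because of the a priori bounds $\bar\nu/2\le\bar\nu_{kh}\le 3\bar\nu/2$ and $\bar q_{kh}\in\Qad(0,1)$. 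Combining these pieces gives \eqref{eq:errorestimate_multiplier}.

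The only delicate point is ensuring the discrete denominator $B_{kh}$ stays uniformly bounded away from zero, which is why \cref{prop:slaterPointDiscrete} is crucial; everything else reduces to Lipschitz bounds from \cref{prop:stability_g} and the discretization error for the terminal functional from \cref{prop:errorEstimatesG}.
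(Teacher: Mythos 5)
Your proposal is correct and follows essentially the same route as the paper: both express $\bar{\mu}$ and $\bar{\mu}_{kh}$ via the stationarity condition in $\nu$ as $-\partial_\nu j/\partial_\nu g$ (respectively the discrete analogue) and then bound the difference of quotients using the discretization error estimate~\eqref{eq:errorEstimatesG2}, the Lipschitz bounds on $\partial_\nu g$ and $\partial_\nu j$, and a uniform lower bound $\abs{\partial_\nu g_{kh}}\geq \bar{\eta}/2$ on the denominators. The only (immaterial) difference is the choice of intermediate point in the splitting of the denominator difference — the paper passes through $\partial_\nu g_{kh}(\bar{\chi})$ while you pass through $\partial_\nu g(\bar{\chi}_{kh})$ — and that you invoke \cref{prop:slaterPointDiscrete} for the lower bound where the paper rederives it from~\eqref{eq:errorEstimatesG2}.
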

\begin{proof}
We abbreviate $\bar{\chi} = (\bar{\nu},\bar{q})$ and $\bar{\chi}_{kh} = (\bar{\nu}_{kh},\bar{q}_{kh})$.	
Combining the optimality conditions for~\eqref{Pt} and~\eqref{Pkh} we obtain
\[
  \bar{\mu}-\bar{\mu}_{kh} = \partial_\nu g(\bar{\chi})^{-1}\partial_\nu j(\bar{\chi}) 
  - \partial_\nu g_{kh}(\bar{\chi}_{kh})^{-1}\partial_\nu j(\bar{\chi}_{kh}).
\]
Now, we may use the discretization error estimate~\eqref{eq:errorEstimatesG2} to infer
\begin{align*}
  \abs{\bar{\mu}-\bar{\mu}_{kh}} &\leq 
                                   \abs{\partial_\nu g(\bar{\chi})^{-1} - \partial_\nu g_{kh}(\bar{\chi})^{-1}}\partial_\nu j(\bar{\chi})\\
	&\quad + \abs{\partial_\nu g_{kh}(\bar{\chi})^{-1}\partial_\nu j(\bar{\chi}) - \partial_\nu g_{kh}(\bar{\chi}_{kh})^{-1}\partial_\nu j(\bar{\chi}_{kh})}\\			
	&\leq \frac{\abs{\partial_\nu g(\bar{\chi})-\partial_\nu g_{kh}(\bar{\chi})}}{\abs{\partial_\nu g(\bar{\chi})\partial_\nu g_{kh}(\bar{\chi})}}\partial_\nu j(\bar{\chi})
	+ \frac{\abs{\partial_\nu g_{kh}(\bar{\chi})-\partial_\nu g_{kh}(\bar{\chi}_{kh})}}{\abs{\partial_\nu g_{kh}(\bar{\chi})\partial_\nu g_{kh}(\bar{\chi}_{kh})}}\partial_\nu j(\bar{\chi})\\
	&\quad + \abs{\partial_\nu g_{kh}(\bar{\chi}_{kh})^{-1}}\abs{\partial_\nu j(\bar{\chi}) - \partial_\nu j(\bar{\chi}_{kh})}\\
	&\leq c\abs{\log k} (k + h^2) + c\norm{\bar{\chi}-\bar{\chi}_{kh}},
	\end{align*}
	where we have used that $\partial_\nu j(\chi) = \int_0^1(1+(\alpha/2)\norm{q}^2)$ and that
        $\abs{\partial_\nu g_{kh}(\bar{\chi})}
        \geq \bar{\eta}/2$ for \(k\) and \(h\) small enough, using again the discretization
        error estimate~\eqref{eq:errorEstimatesG2}.
\end{proof}

\subsection{Optimal error estimates of controls}\label{sec:ImprovedErrorEstimates}

Using the convergence result of the preceding subsection, we now prove optimal order of convergence with respect to the control variable. While the previous result is based on the quadratic growth condition, we now directly rely on the second order sufficient optimality condition and thus avoid taking square roots in the end. The improved convergence result will be consequence of the following Lemma.

\begin{lemma}\label{lemma:ImprovedErrorRate}
	Let $(\bar{\nu},\bar{q})$ be a local solution to~\eqref{Pt} satisfying the second
        order sufficient optimality condition~\eqref{eq:secondOrderSufficientCondition}
        and let $\{(k,h)\}$ be a sequence of positive mesh sizes such that $\abs{\log k}(k
        + h^2) \to 0$. Let $\{(\bar{\nu}_{kh},\bar{q}_{kh})\}_{k,h>0}$ be a sequence of
        local solutions to~\eqref{Pkh} converging in $\R\times\Lcontrol{2}$ and associated
        Lagrange multipliers $\bar{\mu}_{kh}$ converging in $\R$. Then there are constants
        $c > 0$ and $k_0, h_0 > 0$ such that
	\begin{equation}\label{eq:improvedErrorEstimate}
	\prodnorm{\bar{\nu} - \bar{\nu}_{kh}, \bar{q} - \bar{q}_{kh}}^2
	\leq c \left[\abs{\log k}^2 (k + h^2)^2
          + \norm{\bar{q}-q_{kh}}_{\Lcontrol{2}}^2
          + \partial_q\mathcal{L}(\bar{\nu},\bar{q},\bar{\mu})(q_{kh}-\bar{q})\right]
	\end{equation}
	for all $q_{kh} \in \Qsigmaad(0,1)$ and all $k \leq k_0$ and $h \leq h_0$.
\end{lemma}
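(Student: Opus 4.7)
The plan is to produce matching upper and lower bounds on \(\partial^2_{(\nu,q)}\mathcal{L}(\bar{\chi}_\theta,\bar{\mu})[s,d]^2\) evaluated at the error direction, where \(s = \bar{\nu}_{kh}-\bar{\nu}\), \(d = \bar{q}_{kh}-\bar{q}\), \(\bar{\chi}=(\bar{\nu},\bar{q})\), \(\bar{\chi}_{kh}=(\bar{\nu}_{kh},\bar{q}_{kh})\), and \(\bar{\chi}_\theta\) lies on the segment between them, adapting the idea of~\cite{Casas2012d} to the present equality-constrained setting and thereby avoiding the square-root loss of~\cref{prop:errorEstimatesControlLocal}. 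The continuous variational inequality~\eqref{eq:opt_cond_variationalInequality} tested with \(\bar{q}_{kh}\in\Qad(0,1)\) together with \(\partial_\nu\mathcal{L}(\bar{\chi},\bar{\mu})=0\) gives \(\partial_{(\nu,q)}\mathcal{L}(\bar{\chi},\bar{\mu})(s,d)\geq 0\); symmetrically, the discrete conditions~\eqref{eq:opt_cond_variationalInequality_kh} tested with an arbitrary \(q_{kh}\in\Qsigmaad(0,1)\) and \(\partial_\nu\mathcal{L}_{kh}(\bar{\chi}_{kh},\bar{\mu}_{kh})=0\) yield
\[
\partial_{(\nu,q)}\mathcal{L}_{kh}(\bar{\chi}_{kh},\bar{\mu}_{kh})(s,d)
\leq \partial_q\mathcal{L}_{kh}(\bar{\chi}_{kh},\bar{\mu}_{kh})(q_{kh}-\bar{q}).
\]

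The mean value theorem applied to \(\partial_{(\nu,q)}\mathcal{L}(\cdot,\bar{\mu})\) along \([\bar{\chi},\bar{\chi}_{kh}]\) identifies \(\partial^2_{(\nu,q)}\mathcal{L}(\bar{\chi}_\theta,\bar{\mu})[s,d]^2\) with \([\partial_{(\nu,q)}\mathcal{L}(\bar{\chi}_{kh},\bar{\mu})-\partial_{(\nu,q)}\mathcal{L}(\bar{\chi},\bar{\mu})](s,d)\). Dropping the nonpositive last contribution, I insert the discrete Lagrangian and bound the term \([\partial_{(\nu,q)}\mathcal{L}(\bar{\chi}_{kh},\bar{\mu})-\partial_{(\nu,q)}\mathcal{L}_{kh}(\bar{\chi}_{kh},\bar{\mu}_{kh})](s,d)\) by \(c(\abs{\log k}(k+h^2)+\|(s,d)\|)\|(s,d)\|\) using~\cref{prop:stabilityEstimateGkh,prop:errorEstimatesG,prop:convergenceMultipliers}. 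Replacing \(\partial_q\mathcal{L}_{kh}(\bar{\chi}_{kh},\bar{\mu}_{kh})(q_{kh}-\bar{q})\) by \(\partial_q\mathcal{L}(\bar{\chi},\bar{\mu})(q_{kh}-\bar{q})\) produces an analogous error multiplied by \(\|q_{kh}-\bar{q}\|_{\Lcontrol{2}}\), and Young's inequality finally yields
\[
\partial^2_{(\nu,q)}\mathcal{L}(\bar{\chi}_\theta,\bar{\mu})[s,d]^2
\leq c\Bigl[\abs{\log k}^2(k+h^2)^2 + \|q_{kh}-\bar{q}\|^2_{\Lcontrol{2}} + \partial_q\mathcal{L}(\bar{\chi},\bar{\mu})(q_{kh}-\bar{q})\Bigr] + \varepsilon\|(s,d)\|^2.
\]

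The main obstacle is the matching lower bound, because \((s,d)\) satisfies the pointwise sign condition of \(\CriticalCone\) (as \(\bar{q},\bar{q}_{kh}\in\Qad\)) but neither the full complementarity \(d=0\) on \(\{\alpha\bar{q}+\ControlOp^*\bar{z}\neq 0\}\) nor the orthogonality \(g'(\bar{\chi})(s,d)=0\). For the latter, combining \(g(\bar{\chi})=g_{kh}(\bar{\chi}_{kh})=0\) with~\eqref{eq:errorEstimatesG1} and a second order Taylor expansion of \(g\) delivers the a priori estimate \(|g'(\bar{\chi})(s,d)|\leq c\bigl(\|(s,d)\|^2+\abs{\log k}(k+h^2)\bigr)\). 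To transfer the coercivity of~\cref{thm:secondOrderSufficientEquivalence} (valid only on \(\CriticalCone\)) to the larger sign-condition cone, up to a penalty quadratic in \(g'(\bar{\chi})(\cdot)\), I would argue by contradiction as in the proof of~\cref{thm:secondOrderSufficientEquivalence}: a normalized sequence on which no estimate of the form
\[
\partial^2_{(\nu,q)}\mathcal{L}(\bar{\chi},\bar{\mu})[\delta\nu,\delta q]^2
\geq \kappa'\prodnorm{\delta\nu,\delta q}^2 - c\,\bigl(g'(\bar{\chi})(\delta\nu,\delta q)\bigr)^2
\]
holds must, by compactness of the trace \(i_1\) on \(W(0,1)\) and the coercive \(\alpha\bar{\nu}\|\delta q\|^2_{\Lcontrol{2}}\) component of the Hessian combined with~\cref{corollary:weak_lower_semicontinuity_g}, produce a weak limit in \(\CriticalCone\setminus\{0\}\) at which the Hessian vanishes, contradicting~\eqref{eq:secondOrderSufficientCondition}. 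Continuity of the Hessian in \(\chi\) extends the resulting estimate to \(\bar{\chi}_\theta\) with a slightly smaller constant once \(k,h\) are small enough.

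Finally, combining the upper bound with this extended coercivity and substituting the Taylor estimate for \((g'(\bar{\chi})(s,d))^2\) leaves terms of the order \(\|(s,d)\|^4\) and \(\varepsilon\|(s,d)\|^2\), both of which can be absorbed into the left hand side because \(\|(s,d)\|\to 0\) by the convergence result of~\cref{lemma:errorEstimatesControl_suboptimal}. This yields~\eqref{eq:improvedErrorEstimate} for all sufficiently small \(k\) and \(h\).
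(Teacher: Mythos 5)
Your ``Step 0'' (the mean-value expansion of $\partial_{\chi}\mathcal{L}(\cdot,\bar{\mu})$ along the segment, the two-sided use of the continuous and discrete variational inequalities, the consistency errors from \cref{prop:errorEstimatesG} and \cref{prop:convergenceMultipliers}, and the final absorption of $\varepsilon\prodnorm{s,d}^2$ and $\prodnorm{s,d}^4$ using \cref{lemma:errorEstimatesControl_suboptimal}) reproduces the preparation step of the paper's proof essentially verbatim, and the a~priori bound $\abs{g'(\bar{\chi})(s,d)}\leq c(\prodnorm{s,d}^2+\abs{\log k}(k+h^2))$ is correct. The divergence, and the gap, is in the lower bound.

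The proposed ``extended coercivity''
\begin{equation*}
\partial^2_{(\nu,q)}\mathcal{L}(\bar{\chi},\bar{\mu})[\delta\nu,\delta q]^2
\geq \kappa'\prodnorm{\delta\nu,\delta q}^2 - c\,\bigl(g'(\bar{\chi})(\delta\nu,\delta q)\bigr)^2
\quad\text{on the sign-condition cone}
\end{equation*}
does not follow from~\eqref{eq:secondOrderSufficientCondition}, and your contradiction argument cannot establish it. A normalized sequence violating the estimate yields a weak limit $v$ that satisfies the sign conditions (a weakly closed convex set) and $g'(\bar{\chi})v=0$ (weak continuity of the bounded linear functional $g'(\bar{\chi})$), but \emph{not} the complementarity condition $v^q=0$ a.e.\ on $\{\alpha\bar{q}+\ControlOp^*\bar{z}\neq 0\}$: that condition is neither weakly closed as a constraint you have imposed on the sequence, nor imposed at all, since by your own observation $(s,d)$ violates it. Hence the limit need not lie in $\CriticalCone$, the SSC cannot be invoked, and indeed the claimed inequality can fail --- directions leaving the optimum along strongly active control constraints are controlled by first-order growth of the Lagrangian, not by its Hessian, so no penalty in $g'$ alone can restore coercivity there. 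The paper circumvents this by running the contradiction argument directly on the specific normalized error directions $v_n=(\bar{\chi}_{kh}-\bar{\chi})/\rho_n$: the contradiction hypothesis contains the term $\partial_q\mathcal{L}(\bar{\chi},\bar{\mu})(q_{kh}-\bar{q})/\rho_n^2$, and combining it with the discrete variational inequality~\eqref{eq:opt_cond_variationalInequality_kh} gives $\partial_{\chi}\mathcal{L}(\bar{\chi},\bar{\mu})v=0$ (Step~1 there), i.e.\ $\int_0^1\int_\omega\bar{\nu}\abs{(\alpha\bar{q}+\ControlOp^*\bar{z})v^q}=0$, which is precisely what forces the complementarity of the limit (Step~3). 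A repair of your quantitative route would have to add a penalty proportional to the first-order quantity $\partial_q\mathcal{L}(\bar{\chi},\bar{\mu})\delta q$ (nonnegative on the sign cone), which is homogeneous of degree one and therefore destroys the scale invariance your coercivity statement presumes; this is exactly why the paper retains the non-quantitative sequential argument on the actual error directions rather than proving a standalone coercivity lemma.
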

\begin{proof}
	We adapt the ideas of the proof of Theorem~2.14 in \cite{Casas2012d} for optimal
        control problems without state constraints. Instead of working with the objective
        functional, we use the Lagrange function $\mathcal{L}$ and the corresponding
        second order sufficient optimality
        condition~\eqref{eq:secondOrderSufficientCondition}. We abbreviate $\bar{\chi} =
        (\bar{\nu},\bar{q})$ and $\bar{\chi}_{kh} = (\bar{\nu}_{kh},\bar{q}_{kh})$.
	
	\textit{Step 0: Preparation.} Since $(\bar{\nu},\bar{q})$ is optimal for~\eqref{Pt}, it holds
	\begin{equation}\label{eq:improvedErrorEstimateP1}
	\partial_{\chi}\mathcal{L}(\bar{\chi},\bar{\mu})(\chi-\bar{\chi})\geq 0
	\end{equation}
	for all $\chi \in \Rplus \times\Qad(0,1)$, and by the same arguments for the discrete problem~\eqref{Pkh}
	\begin{equation}\label{eq:improvedErrorEstimateP2}
	\partial_{\chi}\mathcal{L}_{kh}(\bar{\chi}_{kh},\bar{\mu}_{kh})(\chi_{kh}-\bar{\chi}_{kh})\geq 0
	\end{equation}
	for all $\chi_{kh} \in \Rplus \times\Qsigmaad(0,1)$.		
	Using~\eqref{eq:improvedErrorEstimateP1} and $\Qsigma(0,1) \subset \Q(0,1)$, we find
	\begin{multline}
	\label{eq:improvedErrorEstimateP3}
	\partial_{\chi}\left[\mathcal{L}(\bar{\chi}_{kh},\bar{\mu})-\mathcal{L}(\bar{\chi},\bar{\mu})\right](\bar{\chi}_{kh}-\bar{\chi})
	\leq \partial_{\chi}\mathcal{L}(\bar{\chi}_{kh},\bar{\mu})(\bar{\chi}_{kh}-\bar{\chi})\\
	\leq \partial_{\chi}\left[\mathcal{L}(\bar{\chi}_{kh},\bar{\mu})-\mathcal{L}(\bar{\chi}_{kh},\bar{\mu}_{kh})\right](\bar{\chi}_{kh}-\bar{\chi}) + \partial_{\chi}\mathcal{L}(\bar{\chi}_{kh},\bar{\mu}_{kh})(\bar{\chi}_{kh}-\bar{\chi})\mbox{.}
	\end{multline}
	The first term on the right-hand side of~\eqref{eq:improvedErrorEstimateP3} satisfies
	\begin{equation*}
	\partial_{\chi}\left[\mathcal{L}(\bar{\chi}_{kh},\bar{\mu})-\mathcal{L}(\bar{\chi}_{kh},\bar{\mu}_{kh})\right](\bar{\chi}_{kh}-\bar{\chi}) = (\bar{\mu}-\bar{\mu}_{kh})g'(\bar{\chi}_{kh})(\bar{\chi}_{kh}-\bar{\chi})\mbox{.}
	\end{equation*}
	Concerning the second term on the right-hand side of~\eqref{eq:improvedErrorEstimateP3}, using~\eqref{eq:improvedErrorEstimateP2} and inserting additional terms with some arbitrary $\chi_{kh} \in \Rplus\times\Qsigmaad(0,1)$ yield
	\begin{align}
	\partial_{\chi}\mathcal{L}(\bar{\chi}_{kh}&,\bar{\mu}_{kh})(\bar{\chi}_{kh}-\bar{\chi})
	\leq \partial_{\chi}\mathcal{L}(\bar{\chi}_{kh},\bar{\mu}_{kh})(\bar{\chi}_{kh}-\bar{\chi}) + \partial_{\chi}\mathcal{L}_{kh}(\bar{\chi}_{kh},\bar{\mu}_{kh})(\chi_{kh}-\bar{\chi}_{kh})\nonumber\\
	&= \partial_{\chi}\left[\mathcal{L}_{kh}(\bar{\chi}_{kh},\bar{\mu}_{kh})-\mathcal{L}(\bar{\chi}_{kh},\bar{\mu}_{kh})\right](\bar{\chi}-\bar{\chi}_{kh}) + \partial_{\chi}\mathcal{L}_{kh}(\bar{\chi}_{kh},\bar{\mu}_{kh})(\chi_{kh}-\bar{\chi}) \nonumber\\
	&= \partial_{\chi}\left[\mathcal{L}_{kh}(\bar{\chi}_{kh},\bar{\mu}_{kh})-\mathcal{L}(\bar{\chi}_{kh},\bar{\mu}_{kh})\right](\bar{\chi}-\bar{\chi}_{kh})\nonumber\\
	&\quad + \partial_{\chi}\left[\mathcal{L}_{kh}(\bar{\chi}_{kh},\bar{\mu}_{kh})-\mathcal{L}(\bar{\chi}_{kh},\bar{\mu}_{kh})\right](\chi_{kh}-\bar{\chi})\nonumber\\
	&\quad+ \partial_{\chi}\left[\mathcal{L}(\bar{\chi}_{kh},\bar{\mu}_{kh})-\mathcal{L}(\bar{\chi},\bar{\mu}_{kh})\right](\chi_{kh}-\bar{\chi}) + \partial_{\chi}\mathcal{L}(\bar{\chi},\bar{\mu}_{kh})(\chi_{kh}-\bar{\chi})\mbox{.}\label{eq:improvedErrorEstimateP5}
	\end{align}
	Concerning the first term on the right-hand side, we find
	\begin{align*}
	\partial_{\chi}\left[\mathcal{L}_{kh}(\bar{\chi}_{kh},\bar{\mu}_{kh})-\mathcal{L}(\bar{\chi}_{kh},\bar{\mu}_{kh})\right](\bar{\chi}-\bar{\chi}_{kh}) &= \bar{\mu}_{kh}\left[g'_{kh}(\bar{\chi}_{kh})-g'(\bar{\chi}_{kh})\right](\bar{\chi}-\bar{\chi}_{kh})\\
	&\leq c \abs{\log k} (k + h^2) \norm{\bar{\chi}-\bar{\chi}_{kh}}\mbox{,}
	\end{align*}
	where we have used boundedness of the Lagrange multipliers $\bar{\mu}_{kh}$ due to \cref{prop:convergenceMultipliers} and the estimate~\eqref{eq:errorEstimatesG2}. Similarly for the second term of~\eqref{eq:improvedErrorEstimateP5}, it holds
	\begin{equation*}
	\partial_{\chi}\left[\mathcal{L}_{kh}(\bar{\chi}_{kh},\bar{\mu}_{kh})-\mathcal{L}(\bar{\chi}_{kh},\bar{\mu}_{kh})\right](\chi_{kh}-\bar{\chi}) \leq c \abs{\log k} (k + h^2) \norm{\chi_{kh}-\bar{\chi}}\mbox{.}
	\end{equation*}
	The third term of~\eqref{eq:improvedErrorEstimateP5} is estimated using Lipschitz continuity of $\partial_{\chi}\mathcal{L}$ (due to Lipschitz continuity of $g'$ on bounded sets)
	\begin{equation*}
	\partial_{\chi}\left[\mathcal{L}(\bar{\chi}_{kh},\bar{\mu}_{kh})-\mathcal{L}(\bar{\chi},\bar{\mu}_{kh})\right](\chi_{kh}-\bar{\chi}) \leq c \norm{\bar{\chi}_{kh}-\bar{\chi}}\norm{\chi_{kh}-\bar{\chi}}\mbox{.}
	\end{equation*}
	Since $\mathcal{L}$ is two times continuously differentiable we find
	\begin{equation}\label{eq:improvedErrorEstimateP8}
	\partial_{\chi}^2\mathcal{L}(\check{\chi}_{kh},\bar{\mu})[\bar{\chi}_{kh}-\bar{\chi}]^2 = \partial_{\chi}\left[\mathcal{L}(\bar{\chi}_{kh},\bar{\mu})-\mathcal{L}(\bar{\chi},\bar{\mu})\right](\bar{\chi}_{kh}-\bar{\chi})
	\end{equation}
	with $\check{\chi}_{kh}$ in between $\bar{\chi}$ and $\bar{\chi}_{kh}$. Together with the estimates above, we obtain
	\begin{equation}\label{eq:improvedErrorEstimateP9}
	\begin{aligned}
	\partial_{\chi}^2\mathcal{L}(\check{\chi}_{kh},\bar{\mu})[\bar{\chi}_{kh}-\bar{\chi}]^2 &\leq c \abs{\log k} (k + h^2) \left(\norm{\bar{\chi}-\bar{\chi}_{kh}} + \norm{\bar{\chi}-\chi_{kh}}\right)\\	
	&\quad+ c\norm{\bar{\chi}_{kh}-\bar{\chi}}\norm{\chi_{kh}-\bar{\chi}} 
	+ \partial_{\chi}\mathcal{L}(\bar{\chi},\bar{\mu}_{kh})(\chi_{kh}-\bar{\chi})\\
	&\quad+ \abs{\bar{\mu}-\bar{\mu}_{kh}}\abs{g'(\bar{\chi}_{kh})(\bar{\chi}_{kh}-\bar{\chi})}\mbox{.}
	\end{aligned}
	\end{equation}
	
	We argue by contradiction. Suppose that~\eqref{eq:improvedErrorEstimate} is false, then there exist a subsequence of mesh sizes $\{k_n,h_n\}$ converging to zero and $(\bar{\nu}_n,\bar{q}_n) \in \Rplus\times\Qsigmaad(0,1)$ such that $(\bar{\nu}_n,\bar{q}_n) \to (\bar{\nu},\bar{q})$ with
	\begin{equation*}
	\norm{\bar{\chi}_{n}-\bar{\chi}}^2 > n\left[(\abs{\log k_n} (k_n + h_n^2))^2 + \norm{q_n-\bar{q}}_{\Lcontrol{2}}^2 + \partial_{q}\mathcal{L}(\bar{\chi},\bar{\mu})( q_n-\bar{q})\right]\mbox{,}
	\end{equation*}
	where we use for convenience the short notation $\bar{\nu}_{n} = \bar{\nu}_{k_n h_n}$ and $\mathcal{L}_{n} = \mathcal{L}_{k_n h_n}$ etc. Setting $\chi_n = (\bar{\nu}, q_n)$, the inequality is equivalent to 
	\begin{equation}\label{eq:improvedErrorEstimateP10}
	\frac{1}{n} > \frac{(\abs{\log k_n} (k_n + h_n^2))^2}{\norm{\bar{\chi}_n-\bar{\chi}}^2} + \frac{\norm{\chi_n-\bar{\chi}}^2}{\norm{\bar{\chi}_n-\bar{\chi}}^2} + \frac{\partial_{\chi}\mathcal{L}(\bar{\chi},\bar{\mu})(\chi_n-\bar{\chi})}{\norm{\bar{\chi}_n-\bar{\chi}}^2}\mbox{.}
	\end{equation}
	Define $\rho_n = \norm{\bar{\chi}_n-\bar{\chi}}$ and
	\begin{equation*}
	v_n = (v^\nu_n, v^q_n) =\frac{1}{\rho_n}(\bar{\chi}_n-\bar{\chi})\mbox{.}
	\end{equation*}
	We may assume w.l.o.g.\ that  $v^\nu_n \rightarrow v^\nu$ in $\R$ and $v_n^q \rightharpoonup v^q$ in $\Lcontrol{2}$ and we abbreviate $v = (v^\nu, v^q)$.
	
	\textit{Step 1: $\partial_{\chi}\mathcal{L}(\bar{\chi},\bar{\mu})v = 0$. }
	The optimality condition~\eqref{eq:optimalityCondLagrange} implies
	\begin{equation*}
	\partial_{\chi}\mathcal{L}(\bar{\chi},\bar{\mu})v 
	= \lim_{n\rightarrow\infty} \partial_{\chi}\mathcal{L}(\bar{\chi},\bar{\mu})v_n \geq 0\mbox{.}
	\end{equation*}
	To show the reverse inequality, we consider
	\begin{align}
	\partial_{\chi}\mathcal{L}(\bar{\chi},\bar{\mu})v &= \lim_{n\rightarrow\infty} \partial_{\chi}\mathcal{L}(\bar{\chi},\bar{\mu})v_n\label{eq:improvedErrorEstimateP11}\\
	&= \lim_{n\rightarrow\infty}\partial_{\chi}\mathcal{L}_n(\bar{\chi}_n,\bar{\mu}_n)v_n\nonumber\\
	&\quad + \lim_{n\rightarrow\infty}\partial_{\chi}\left[\mathcal{L}(\bar{\chi}_n,\bar{\mu}_n) - \mathcal{L}_n(\bar{\chi}_n,\bar{\mu}_n)\right]v_n\nonumber\\
	&\quad+ \lim_{n\rightarrow\infty}\partial_{\chi}\left[\mathcal{L}(\bar{\chi},\bar{\mu}) - \mathcal{L}(\bar{\chi}_n,\bar{\mu}_n)\right]v_n\mbox{.}\label{eq:improvedErrorEstimateP12}
	\end{align}
	The limit in~\eqref{eq:improvedErrorEstimateP11} exists due to weak convergence of $(v^\nu_n, v^q_n)$. Concerning the second limit in~\eqref{eq:improvedErrorEstimateP12} we observe
	\begin{multline*}
	\lim_{n\rightarrow\infty}\left[\partial_{\chi}\mathcal{L}(\bar{\chi}_n,\bar{\mu}_n) - \partial_{\chi}\mathcal{L}_n(\bar{\chi}_n,\bar{\mu}_n)\right]v_n\\
	= \lim_{n\rightarrow\infty}\bar{\mu}_n\left[g'(\bar{\chi}_n) - g'_n(\bar{\chi}_n)\right]v_n
	\leq c \lim_{n\rightarrow\infty}\abs{\log k_n} (k_n + h_n^2) = 0\mbox{,}
	\end{multline*}
	where we have used boundedness of $\bar{\mu}_n$ and~\eqref{eq:errorEstimatesG2}. Using Lipschitz continuity we estimate the third limit as
	\begin{equation*}
	\lim_{n\rightarrow\infty}\left[\partial_{\chi}\mathcal{L}(\bar{\chi},\bar{\mu}) - \partial_{\chi}\mathcal{L}(\bar{\chi}_n,\bar{\mu}_n)\right]v_n
	\leq c \lim_{n\rightarrow\infty}\left(\norm{\bar{\chi}_n-\bar{\chi}} + \abs{\bar{\mu}-\bar{\mu}_{n}}\right) = 0\mbox{,}
	\end{equation*}
	due to $\norm{v_n} = 1$ and convergence of $\bar{\mu}_{n}$; see \cref{prop:convergenceMultipliers}. Thus, the first limit in~\eqref{eq:improvedErrorEstimateP12} must exist as well. 	
	Using continuity of $\partial_{\chi}\mathcal{L}$ in $\R\times\Lcontrol{2}$ and the optimality condition~\eqref{eq:improvedErrorEstimateP2} for $\bar{\chi}_n = (\bar{\nu}_n,\bar{q}_n)$ with $\chi_n = (\bar{\nu}, q_n)$ we find
	\begin{align*}
	\partial_{\chi}\mathcal{L}(\bar{\chi},\bar{\mu})v
	&\leq \lim_{n\rightarrow\infty}\partial_{\chi}\mathcal{L}_n(\bar{\chi}_n,\bar{\mu}_n)v_n\\
	&= \lim_{n\rightarrow\infty}\frac{1}{\rho_n}\left[\partial_{\chi}\mathcal{L}_n(\bar{\chi}_n,\bar{\mu}_n)(0,q_n-\bar{q})+\partial_{\chi}\mathcal{L}_n(\bar{\chi}_n,\bar{\mu}_n)(\bar{\nu}_n-\bar{\nu},\bar{q}_n-q_n)\right]\\
	&\leq \lim_{n\rightarrow\infty}\frac{1}{\rho_n}\partial_{\chi}\mathcal{L}_n(\bar{\chi}_n,\bar{\mu}_n)(0,q_n-\bar{q})\mbox{.}
	\end{align*}
	Since for any $\varphi \in \R\times\Lcontrol{2}$ it holds
	\begin{align*}
	\partial_{\chi}\mathcal{L}_n(\bar{\chi}_n,\bar{\mu}_n)\varphi
	&\leq |\partial_{\chi}\mathcal{L}(\bar{\chi}_n,\bar{\mu}_n)\varphi| + \left|\left[\partial_{\chi}\mathcal{L}_n(\bar{\chi}_n,\bar{\mu}_n) -\partial_{\chi}\mathcal{L}(\bar{\chi}_n,\bar{\mu}_n)\right]\varphi\right|\\
	&\leq c\left(1 + \abs{\log k_n} (k_n + h_n^2)\right) \prodnorm{\varphi^\nu,\varphi^q}\mbox{,}
	\end{align*}
	we conclude
	\begin{equation*}
	\partial_{\chi}\mathcal{L}(\bar{\chi},\bar{\mu})v
	\leq\lim_{n\rightarrow\infty}c\left(1 + \abs{\log k_n} (k_n + h_n^2)\right)\frac{\norm{q_n-\bar{q}}_{\Lcontrol{2}}}{\rho_n} = 0\mbox{,}
	\end{equation*}
	due to~\eqref{eq:improvedErrorEstimateP10}. In summary, we proved $\partial_{\chi}\mathcal{L}(\bar{\chi},\bar{\mu})v = 0$.
	
	\textit{Step 2: $g'(\bar{\chi})v = 0$. }
	Using $g(\bar{\chi}) = g_n(\bar{\chi}_n) = 0$,~\eqref{eq:errorEstimatesG1},~\eqref{eq:improvedErrorEstimateP10}, and step 1 we infer
	\begin{align*}
	j'(\bar{\chi})v &= \lim_{n\rightarrow\infty}\frac{1}{\rho_n}\left[j(\bar{\chi}_n)-j(\bar{\chi})\right] 
	= \lim_{n\rightarrow\infty}\frac{1}{\rho_n}\left[\mathcal{L}_n(\bar{\chi}_n,\bar{\mu})-\mathcal{L}(\bar{\chi},\bar{\mu})\right]\\
	&= \lim_{n\rightarrow\infty}\frac{1}{\rho_n}\left[\mathcal{L}_n(\bar{\chi}_n,\bar{\mu})-\mathcal{L}(\bar{\chi}_n,\bar{\mu})+\mathcal{L}(\bar{\chi}_n,\bar{\mu})-\mathcal{L}(\bar{\chi},\bar{\mu})\right]\\
	&\leq \limsup_{n\rightarrow\infty}\frac{c}{\rho_n}\abs{\log k_n} (k_n + h_n^2) + \partial_{\chi}\mathcal{L}(\bar{\chi},\bar{\mu})v = 0\mbox{.}
	\end{align*}
	Similarly, we calculate
	\begin{align*}
	g'(\bar{\chi})v &= \lim_{n\rightarrow\infty} \frac{1}{\rho_n} \left[g(\bar{\chi}_n) - g(\bar{\chi})\right]
	=\lim_{n\rightarrow\infty} \frac{1}{\rho_n} \left[(g_{n}(\bar{\chi}_n) - g(\bar{\chi})) + (g(\bar{\chi}_n) - g_{n}(\bar{\chi}_n))\right]\\
	&\leq \limsup_{n\rightarrow \infty}\frac{c}{
		\rho_n}\abs{\log k_n} (k_n + h_n^2) = 0\mbox{.}
	\end{align*}
	Hence, from $\partial_{\chi}\mathcal{L}(\bar{\chi},\bar{\mu})v = j'(\bar{\chi})v + \bar{\mu}\,g'(\bar{\chi})v = 0$
	and $\bar{\mu} > 0$ (see \cref{lemma:first_order_optcond}), we conclude $g'(\bar{\chi})v = 0$.
	
	\textit{Step 3: $v \in \CriticalCone$. }
	Because the set
	\begin{equation*}
	\left\{\delta{q} \in  L^2(I\times\omega) \;\left|\; 
	\begin{aligned}
	\delta{q} \leq 0 &\mbox{~if~} \bar{q}(t,x) = q_b \\
	\delta{q} \geq 0 &\mbox{~if~} \bar{q}(t,x) = q_a
	\end{aligned}\right.
	\right\}\mbox{,}
	\end{equation*}
	is closed and convex, it is in particular weakly closed. Moreover, due to feasibility of $q_n$ every $(q_n-\bar{q})/\rho_n$ belongs to the set above, so does the weak limit. Thus, $v$ satisfies $v^q \leq 0$, if $\bar{q}(t,x) = q_b$, and $v^q \geq 0$, if $\bar{q}(t,x) = q_a$. For this reason,~\eqref{eq:optimalitySignConditionControlPlusAdjoint} implies
	\[
	\int_0^1\int_{\omega}\bar{\nu}(\alpha\bar{q}+\ControlOp^*\bar{z})v^q\D{x}\D{t}
	= \int_0^1\int_{\omega}\bar{\nu}\abs{(\alpha\bar{q}+\ControlOp^*\bar{z})v^q}\D{x}\D{t}.
	\]
	Moreover, due to $\partial_\chi\mathcal{L}(\bar{\chi},\bar{\mu})v = 0$ and the first order necessary condition $ \partial_\nu\mathcal{L}(\bar{\chi},\bar{\mu}) = 0$ we have the equality
	\begin{equation*}
	0 = \partial_q\mathcal{L}(\bar{\chi},\bar{\mu})v^q = \int_0^1\bar{\nu}\inner{\alpha\bar{q}+\ControlOp^*\bar{z},v^q}_{L^2(\omega)}\D{t} = \int_0^1\int_{\omega}\bar{\nu}\abs{(\alpha\bar{q}+\ControlOp^*\bar{z})v^q}\D{x}\D{t}\mbox{.}
	\end{equation*}		
	Hence, $v^q = 0$, if $\alpha\bar{q}(t,x)+\ControlOp^*\bar{z}(t,x) \neq 0$, and $v^q$ satisfies the sign condition~\eqref{eq:secondOrderSignCondition} as well. With step 1 we have proved that $v \in \CriticalCone$.		
	
	\textit{Step 4: $v = 0$.} Since $\bar{\chi}_n\rightarrow\bar{\chi}$ in $\R\times\Lcontrol{2}$, it holds $\check{\chi}_n\rightarrow\bar{\chi}$, where $\check{\chi}_n$ was defined in~\eqref{eq:improvedErrorEstimateP8}. Thus, continuity of $\partial_{\chi}\mathcal{L}$ in $\R\times\Lcontrol{2}$ yields
	\begin{align}
	\liminf_{n\rightarrow\infty} \partial_{\chi}^2\mathcal{L}(\check{\chi}_n,\bar{\mu})v_n^2
	&\geq \liminf_{n\rightarrow\infty}\partial_{\chi}^2\mathcal{L}(\bar{\chi},\bar{\mu})v_n^2 +
	\liminf_{n\rightarrow\infty} \partial_{\chi}^2[\mathcal{L}(\check{\chi}_n,\bar{\mu}) -\mathcal{L}(\bar{\chi},\bar{\mu})]v_n^2 \nonumber\\
	& = \liminf_{n\rightarrow\infty}\partial_{\chi}^2\mathcal{L}(\bar{\chi},\bar{\mu})v_n^2\mbox{.}\label{eq:improvedErrorEstimateP13}
	\end{align}		
	Due to~\eqref{eq:errorestimate_multiplier} and~\eqref{eq:improvedErrorEstimateP10} we have
	\begin{multline}
	\frac{1}{\rho_n^2}\partial_{\chi}\left[\mathcal{L}(\bar{\chi},\bar{\mu}_{n})-\mathcal{L}(\bar{\chi},\bar{\mu})\right](\chi_{n}-\bar{\chi}) 
	= \frac{1}{\rho_n^2}(\bar{\mu}_n-\bar{\mu})g'(\bar{\chi})(\chi_{n}-\bar{\chi})\\
	\leq c\frac{\abs{\bar{\mu}-\bar{\mu}_{n}}}{\norm{\bar{\chi}_n-\bar{\chi}}} \frac{\norm{\chi_{n}-\bar{\chi}}}{\norm{\bar{\chi}_n-\bar{\chi}}}
	\leq \frac{c}{\sqrt{n}}\left(\frac{\abs{\log k_n} (k_n + h_n^2)}{\norm{\bar{\chi}_n-\bar{\chi}}} + 1\right) \leq \frac{c}{\sqrt{n}}.
	\label{eq:improvedErrorEstimateP14}
	\end{multline}
	Similarly, using \eqref{eq:errorestimate_multiplier} and since $\abs{g'(\bar{\chi})v_n} \to 0$ by step 2, it holds
	\begin{align}
	\frac{\abs{\bar{\mu}-\bar{\mu}_{n}}\abs{g'(\bar{\chi}_n)(\bar{\chi}_{n}-\bar{\chi})}}{\norm{\bar{\chi}_n-\bar{\chi}}^2}
	&\leq \frac{\abs{\bar{\mu}-\bar{\mu}_{n}}}{\norm{\bar{\chi}_n-\bar{\chi}}}\left(\abs{g'(\bar{\chi})v_n} + \abs{[g'(\bar{\chi}_n)-g'(\bar{\chi})]v_n}\right)\nonumber\\
	&\leq c\left(\frac{\abs{\log k_n} (k_n + h_n^2)}{\norm{\bar{\chi}_n-\bar{\chi}}} + 1\right)
	\left(\abs{g'(\bar{\chi})v_n} + \norm{\bar{\chi}_n-\bar{\chi}}\right) \to 0.	
	\label{eq:improvedErrorEstimateP15}
	\end{align}
	Employing~\eqref{eq:improvedErrorEstimateP13} and~\eqref{eq:improvedErrorEstimateP9} we infer
	\begin{align}
	\liminf_{n\rightarrow\infty}\partial_{\chi}^2\mathcal{L}(\bar{\chi},\bar{\mu})v_n^2 &\leq \liminf_{n\rightarrow\infty} \partial_{\chi}^2\mathcal{L}(\check{\chi}_n, \bar{\mu})v_n^2
	\leq \limsup_{n\rightarrow\infty} \partial_{\chi}^2\mathcal{L}(\check{\chi}_n, \bar{\mu})v_n^2\nonumber\\
	&\leq \limsup_{n\rightarrow\infty} \left(\frac{c \abs{\log k_n} (k_n + h_n^2)}{\norm{\bar{\chi}_n-\bar{\chi}}} \left(1+ \frac{\norm{\chi_{n}-\bar{\chi}}}{\norm{\bar{\chi}_n-\bar{\chi}}}\right) + c\frac{\norm{\chi_{n}-\bar{\chi}}}{\norm{\bar{\chi}_n-\bar{\chi}}}\right.\nonumber\\
	&\quad +\frac{\partial_{\chi}\mathcal{L}(\bar{\chi},\bar{\mu})(\chi_{n}-\bar{\chi})}{\norm{\bar{\chi}_n-\bar{\chi}}^2} + \frac{\partial_{\chi}\left[\mathcal{L}(\bar{\chi},\bar{\mu}_{n})-\mathcal{L}(\bar{\chi},\bar{\mu})\right](\chi_{n}-\bar{\chi})}{\norm{\bar{\chi}_n-\bar{\chi}}^2}\nonumber\\
	&\quad +\left.\frac{\abs{\bar{\mu}-\bar{\mu}_{n}}\abs{g'(\bar{\chi}_n)(\bar{\chi}_{n}-\bar{\chi})}}{\norm{\bar{\chi}_n-\bar{\chi}}^2}\right) = 0\mbox{.}\label{eq:improvedErrorEstimateP20}
	\end{align}
	Here, we have used~\eqref{eq:improvedErrorEstimateP10} to estimate the first three summands,~\eqref{eq:improvedErrorEstimateP14} for the second last term, and~\eqref{eq:improvedErrorEstimateP15} for the last term.
	Last, weak lower semicontinuity of $j''$ and $g''$, and \cref{corollary:weak_lower_semicontinuity_g} lead to
	\begin{equation*}
	\partial_{\chi}^2\mathcal{L}(\bar{\chi},\bar{\mu})v^2 \leq 
	\liminf_{n\rightarrow\infty}\partial_{\chi}^2\mathcal{L}(\bar{\chi},\bar{\mu})v_n^2 \leq 0\mbox{.}
	\end{equation*}
	From the second order sufficient condition~\eqref{eq:secondOrderSufficientCondition} we conclude $v = (v^\nu,v^q) = 0$. Note that this in particular implies $v^\nu \to 0$ in $\R$.
	
	\textit{Step 5: Final contradiction. } 
	Using $\prodnorm{v_n^\nu, v_n^q} = 1$ and $v^\nu \to 0$ we obtain
	\begin{equation*}
	0 < \alpha \bar{\nu} = \alpha \bar{\nu} \liminf_{n\rightarrow\infty}\prodnorm{v_n^\nu, v_n^q}^2
	= \liminf_{n\rightarrow\infty}\alpha\int_0^1 \bar{\nu}\norm{v^q_n(t)}_{\LcontrolSpatial}^2\D{t}\mbox{.}
	\end{equation*}
	Using the specific structure of $j''$, we see that
	\begin{equation*}
	\liminf_{n\rightarrow\infty}\alpha\int_0^1 \bar{\nu}\norm{v^q_n(t)}_{\LcontrolSpatial}^2\D{t} = \liminf_{n\rightarrow\infty} j''(\bar{\chi})[v^\nu_n, v^q_n]^2\mbox{.}
	\end{equation*}
	Due to $g''(\bar{\chi})[0,0]^2 = 0$ and weak lower semicontinuity, see~\cref{corollary:weak_lower_semicontinuity_g}, we conclude
	\begin{align*}
	0 &< \liminf_{n\rightarrow\infty} j''(\bar{\chi})v_n^2
	\leq \liminf_{n\rightarrow\infty} j''(\bar{\chi})v_n^2 + \bar{\mu}\liminf_{n\rightarrow\infty} g''(\bar{\chi})v_n^2\\
	&\leq \liminf_{n\rightarrow\infty} \partial_{\chi}^2\mathcal{L}(\bar{\chi},\bar{\mu})v_n^2 \leq 0\mbox{,}
	\end{align*}
	where we have used again~\eqref{eq:improvedErrorEstimateP20} in the last inequality.
\end{proof}

Finally we prove the main result of this paper, i.e.\ \emph{a priori} discretization error estimates that are optimal with respect to the control variable. We consider different control discretization strategies.

\subsubsection{Parameter control and variational discretization}
As proposed in \cite{Hinze2005} for elliptic equations, cf.\ also \cite{Meidner2008} for parabolic equations, the state and adjoint equations are discretized, only. The control is then implicitly discretized employing the optimality conditions, precisely the discrete analogue to~\eqref{eq:projectionFormulaControl}. In this case, the operator $\ProjDiscControl$ is the identity and $\sigma(k,h) = 0$.

\begin{theorem}[Variational discretization]
	\label{thm:optimalconvergence_variational}
	Let the assumptions of \cref{lemma:errorEstimatesControl_suboptimal} hold and suppose the variational control discretization, i.e.\ $\Qsigma(0,1) = \Q(0,1)$. Then there is a constant $c > 0$ not depending on $h$ and $k$ such that
	\begin{equation*}
	\abs{\bar{\nu} - \bar{\nu}_{kh}} + \norm{\bar{q} - \bar{q}_{kh}}_{L^2(I\times\omega)} \leq c \abs{\log k} (k + h^2)\mbox{.}
	\end{equation*}
\end{theorem}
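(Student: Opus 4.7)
The plan is to combine the improved error estimate from \cref{lemma:ImprovedErrorRate} with the trivial choice $q_{kh} = \bar{q}$, which is admissible precisely because the variational discretization uses the full continuous admissible set $\Qsigmaad(0,1) = \Qad(0,1)$.

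First, I would verify that the hypotheses of \cref{lemma:ImprovedErrorRate} are met. The assumptions of \cref{lemma:errorEstimatesControl_suboptimal} provide a sequence of local discrete minimizers $(\bar{\nu}_{kh},\bar{q}_{kh})$ converging to $(\bar{\nu},\bar{q})$ in $\R\times\Lcontrol{2}$ at the suboptimal rate (with $\sigma(k,h)=0$). Since the second order sufficient condition~\eqref{eq:secondOrderSufficientCondition} implies the quadratic growth condition~\eqref{eq:quadraticGrowthCondition} by \cref{thm:secondOrderSufficient}, it is implicitly available. \Cref{prop:convergenceMultipliers} then yields convergence of the associated multipliers $\bar{\mu}_{kh} \to \bar{\mu}$ in $\R$. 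Hence \cref{lemma:ImprovedErrorRate} applies for all $k \leq k_0$, $h \leq h_0$.

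Next, I would invoke~\eqref{eq:improvedErrorEstimate} with the particular test function $q_{kh} \ldef \bar{q}$. This is a valid choice because, in the variational discretization setting, $\Qsigmaad(0,1) = \Qad(0,1) \ni \bar{q}$. With this choice, the last two bracketed terms on the right-hand side of~\eqref{eq:improvedErrorEstimate} both vanish, so we immediately obtain
\begin{equation*}
  \prodnorm{\bar{\nu} - \bar{\nu}_{kh}, \bar{q} - \bar{q}_{kh}}^2
  \;\leq\; c\, \abs{\log k}^2 (k + h^2)^2.
\end{equation*}
Taking square roots and using the elementary equivalence of $\prodnorm{\cdot,\cdot}$ with $\abs{\cdot} + \norm{\cdot}_{L^2(I\times\omega)}$ finishes the argument.

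There is essentially no obstacle in this particular proof, because the heavy lifting has already been done in \cref{lemma:ImprovedErrorRate}; the variational discretization is exactly the situation in which the error term $\sigma(k,h)$ from the control discretization and the bilinear remainders in~\eqref{eq:improvedErrorEstimate} collapse. The only subtlety worth noting is that the improved rate genuinely relies on the full strength of the SSC (via \cref{lemma:ImprovedErrorRate}) rather than the mere quadratic growth condition used in \cref{prop:errorEstimatesControlLocal}, which is what allowed us to avoid the square root loss that produced the suboptimal rate $\mathcal{O}(\abs{\log k}^{1/2}(k^{1/2}+h))$ there.
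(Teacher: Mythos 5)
Your proposal is correct and follows the same route as the paper: both reduce the theorem to \cref{lemma:ImprovedErrorRate} after checking its hypotheses via \cref{lemma:errorEstimatesControl_suboptimal} and \cref{prop:convergenceMultipliers}. The one substantive point of comparison is the choice of test function: the paper's one-line proof says to apply \cref{lemma:ImprovedErrorRate} with $q_{kh}=\bar{q}_{kh}$, whereas you take $q_{kh}=\bar{q}$ (admissible since $\Qsigmaad(0,1)=\Qad(0,1)$ here). Your choice is the one that actually closes the argument --- it annihilates both the $\norm{\bar{q}-q_{kh}}_{\Lcontrol{2}}^2$ term and the $\partial_q\mathcal{L}$ term in \eqref{eq:improvedErrorEstimate} --- while the paper's literal choice would leave $c\norm{\bar{q}-\bar{q}_{kh}}^2$ on the right-hand side with no way to absorb it (and the remaining derivative term has the unhelpful sign $\partial_q\mathcal{L}(\bar{\chi},\bar{\mu})(\bar{q}_{kh}-\bar{q})\geq 0$ by the continuous variational inequality); it is evidently a slip for $q_{kh}=\ProjDiscControl\bar{q}=\bar{q}$, consistent with the proofs of \cref{thm:optimalconvergence_cellwise_constant,thm:optimalconvergence_cellwise_linear}. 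The only minor blemish in your write-up is the remark that the SSC ``is implicitly available'' because it implies quadratic growth --- that inference runs the wrong way; \cref{lemma:ImprovedErrorRate} genuinely needs \eqref{eq:secondOrderSufficientCondition} as a standing hypothesis, a mismatch the paper's own statement of the theorem shares.
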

\begin{proof}
	\Cref{lemma:errorEstimatesControl_suboptimal} guarantees the existence of a sequence of local solutions converging strongly in $\R\times\Lcontrol{2}$. Hence, we can apply \cref{lemma:ImprovedErrorRate} with $q_{kh} = \bar{q}_{kh}$.
\end{proof}
In case of purely time-dependent control, the set $\omega$ is already discrete
and the space $L^2(\omega) \cong \R^{N_c}$ does not need to be discretized; see \cref{sec:notation_assumptions}.
Moreover, in view of the projection formula
\[
\bar{q}_{kh} = \ProjQad{-\frac{1}{\alpha}\ControlOp^*\bar{z}_{kh}},
\]
which can be deduced from~\eqref{eq:opt_cond_variationalInequality_kh} with
$\Qsigmaad(0,1) = \Qad(0,1)$, the optimal control $\bar{q}_{kh}$ obtained by the
variational approach is piecewise constant in time with values in $\R^{N_c}$.
Based on this observation, the controls constructed in
\cref{thm:optimalconvergence_variational} are already contained in a discrete space, and we
obtain the following corollary.
\begin{corollary}[Parameter control]
\label{cor:optimalconvergence_parameter}
	Let the assumptions of \cref{lemma:errorEstimatesControl_suboptimal} hold, suppose that
        \(\omega\) is discrete, and choose the piecewise constant discrete control space
        \(\Qsigma(0,1) = \left\{v \in \Q(0,1) \constraintSet  v|_{I_m} \in
            \mathcal{P}_0({I_m};\R^{N_c}), \; m = 1,2,\ldots,M\right\}\).
        Then there is a constant $c > 0$ not depending on $h$ and $k$ such that
	\begin{equation*}
	\abs{\bar{\nu} - \bar{\nu}_{kh}} + \norm{\bar{q} - \bar{q}_{kh}}_{L^2(I;\R^m)} \leq c \abs{\log k} (k + h^2)\mbox{.}
	\end{equation*}
\end{corollary}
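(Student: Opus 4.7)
The strategy is to reduce the corollary directly to Theorem~\ref{thm:optimalconvergence_variational} by showing that, when \(\omega\) is discrete, the discrete variational optimal controls automatically lie in the prescribed parameter space \(\Qsigma(0,1)\), so that no further discretization of the control is needed.

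To carry this out, I would start from the discrete first-order conditions~\eqref{eq:opt_cond_hamiltonianConstant_kh}--\eqref{eq:opt_cond_feasibility_kh} from Lemma~\ref{lemma:errorEstimatesControl_suboptimal}, applied in the variational setting \(\Qsigma(0,1) = \Q(0,1)\). Testing the variational inequality~\eqref{eq:opt_cond_variationalInequality_kh} against arbitrary \(q \in \Qad(0,1)\) and using the pointwise definition of \(\Qad\) yields the explicit projection formula
\begin{equation*}
\bar{q}_{kh} = \ProjQad{-\tfrac{1}{\alpha} B^*\bar{z}_{kh}},
\end{equation*}
in full analogy with~\eqref{eq:projectionFormulaControl}. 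Because \(\omega\) is discrete, \(B^*\) maps \(L^2(\Omega)\) into the finite-dimensional space \(\R^{N_c}\); hence \(B^*\bar{z}_{kh}\) inherits the piecewise-constant-in-time structure of \(\bar{z}_{kh} \in \Xkh\) on the partition \(\{I_m\}_{m=1}^M\). Since \(\ProjectionQad\) acts coordinatewise and pointwise in time, this structure is preserved, and we obtain \(\bar{q}_{kh} \in \Qsigma(0,1)\).

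With this observation, each variational discrete local minimizer belongs to the strictly smaller parameter admissible set \(\Qsigmaad(0,1)\), and is therefore also a local minimizer of the parameter problem~\eqref{Pkh}: the local neighborhoods in \(\R \times \Lcontrol{2}\) and the objective functional coincide for both problems, and restricting the feasible set cannot destroy local optimality. The convergence rate of the corollary is then inherited verbatim from Theorem~\ref{thm:optimalconvergence_variational}. The only nontrivial step is verifying that the piecewise-constant-in-time structure passes through \(B^*\) and \(\ProjectionQad\), which is immediate precisely because \(\omega\) is discrete; in the distributed setting no such collapse occurs and an honest control discretization error \(\sigma(k,h)\) would enter the estimate.
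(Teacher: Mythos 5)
Your proposal is correct and follows essentially the same route as the paper: the paper likewise deduces the discrete projection formula $\bar{q}_{kh} = \ProjQad{-\frac{1}{\alpha}\ControlOp^*\bar{z}_{kh}}$ from~\eqref{eq:opt_cond_variationalInequality_kh}, observes that for discrete $\omega$ this makes $\bar{q}_{kh}$ piecewise constant in time with values in $\R^{N_c}$ and hence already an element of $\Qsigma(0,1)$, and then inherits the rate from \cref{thm:optimalconvergence_variational}. Your additional remark that local optimality is preserved when the feasible set is restricted is the (implicit) step the paper relies on, so nothing is missing.
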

In the case of a distributed control, the variational control discretization is associated
with an additional implementation effort.
Fully discrete strategies are therefore of independent
interest and we will investigate different variants in the following sections.

\subsubsection{Cellwise constant control approximation}	
The discrete space of controls is defined as follows
\begin{equation*}
\Qsigma(0,1) = \left\{v \in \Q(0,1) \constraintSet  v|_{I_m\times K} \in \mathcal{P}_0({I_m\times K}) \;\text{for all }K \in \mathcal{T}^\omega_h\mbox{,~} m = 1,2,\ldots,M\right\}\mbox{.}
\end{equation*}
We define the orthogonal
projection $\ProjKH \colon L^2(I\times\omega) \to \Qsigma(0,1)$ in the standard way.
Similarly, we introduce the othogonal projection $\ProjK$ onto the piecewise
constant functions in time with values in $L^2$.
Then, for any $v \in \MPRHilbert{I}{L^2}{H^1}$ there holds the projection error estimate
\begin{align}
\norm{\ProjKH v - v}_{L^2(I; L^2)} 
&\leq \norm{\ProjKH v - \ProjK v}_{L^2(I; L^2)} + \norm{\ProjK v - v}_{L^2(I; L^2)}\nonumber\\
&\leq c h \norm{\nabla v}_{L^2(I; L^2)} + c k \norm{\partial_t v}_{L^2(I; L^2)}\mbox{.}\label{eq:projection_error_piecewise_constant}
\end{align}
We obtain the following error estimate for the discretization by cellwise constant controls. Note that also in this case \cref{lemma:errorEstimatesControl_suboptimal} only provides a suboptimal estimate of order $(k + h)^{1/2}$.
\begin{theorem}[Cellwise constant controls]
	\label{thm:optimalconvergence_cellwise_constant}
	Let the assumptions of \cref{lemma:errorEstimatesControl_suboptimal} hold and suppose the piecewise 
	and cellwise constant control discretization. Then there is a constant $c > 0$ not depending on $h$ and $k$ such that
	\begin{equation*}
	\abs{\bar{\nu} - \bar{\nu}_{kh}} + \norm{\bar{q} - \bar{q}_{kh}}_{L^2(I\times\omega)} \leq c \abs{\log k} (k + h)\mbox{.}
	\end{equation*}
\end{theorem}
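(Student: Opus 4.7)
The plan is to invoke \cref{lemma:ImprovedErrorRate} with the specific choice $q_{kh} = \ProjKH\bar{q}$, where $\ProjKH$ is the $L^2$-orthogonal projection onto $\Qsigma(0,1)$. Since $\ProjKH$ acts by averaging on each space-time cell, it preserves the pointwise bounds $q_a\leq\bar{q}\leq q_b$, so $\ProjKH\bar{q}\in\Qsigmaad(0,1)$. The prerequisite of \cref{lemma:ImprovedErrorRate}, namely existence of a sequence of local discrete solutions converging in $\R\times\Lcontrol{2}$ together with the associated multipliers, is furnished by \cref{lemma:errorEstimatesControl_suboptimal,prop:convergenceMultipliers}, noting that $\sigma(k,h)=\mathcal{O}(k+h)$ by the regularity of $\bar{q}$ provided by \cref{prop:regularityOptimalSolution}.

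Next I would bound each of the three terms on the right-hand side of~\eqref{eq:improvedErrorEstimate}. The first, $\abs{\log k}^2(k+h^2)^2$, is already of the target order. For the second, the regularity $\bar{q}\in H^1(I;L^2(\omega))\cap L^2(I;H^1(\omega))$ from \cref{prop:regularityOptimalSolution} together with the projection-error estimate~\eqref{eq:projection_error_piecewise_constant} yields
\[
\norm{\bar{q}-\ProjKH\bar{q}}_{L^2(I\times\omega)}^2 \leq c(k+h)^2.
\]

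The crucial ingredient is the Lagrangian term, for which a direct Cauchy--Schwarz estimate would only give $\mathcal{O}(k+h)$. Starting from the representation $\partial_q\mathcal{L}(\bar{\nu},\bar{q},\bar{\mu})\delta q=\bar{\nu}\int_0^1(\alpha\bar{q}+B^*\bar{z},\delta q)_{L^2(\omega)}\D{t}$ and exploiting $L^2$-orthogonality of $\ProjKH$, I would rewrite
\[
\partial_q\mathcal{L}(\bar{\nu},\bar{q},\bar{\mu})(\ProjKH\bar{q}-\bar{q}) = \bar{\nu}\int_0^1\bigl((I-\ProjKH)(\alpha\bar{q}+B^*\bar{z}),\,\ProjKH\bar{q}-\bar{q}\bigr)_{L^2(\omega)}\D{t}.
\]
Both factors are then of order $k+h$ by~\eqref{eq:projection_error_piecewise_constant}, using $H^1(I;L^2(\omega))\cap L^2(I;H^1(\omega))$-regularity of $\bar{q}$ and the improved adjoint regularity $\bar{z}\in H^1(I;L^2)\cap L^2(I;H^2\cap H^1_0)$ (so that $B^*\bar{z}$ inherits the same $H^1$-in-time/$H^1$-in-space regularity on $\omega$). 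This yields an $\mathcal{O}((k+h)^2)$ bound for the Lagrangian term. Combining the three estimates in~\eqref{eq:improvedErrorEstimate} and taking square roots produces the claimed rate $\abs{\log k}(k+h)$.

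The main obstacle lies precisely in treating the Lagrangian term: the square-rooting mechanism only works because exploiting the Galerkin orthogonality of $\ProjKH$ effectively doubles the order of the projection error, and this step depends critically on the improved regularity of the optimal adjoint state established in \cref{prop:regularityOptimalSolution}.
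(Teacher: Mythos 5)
Your proposal is correct and follows essentially the same route as the paper's proof: both choose $q_{kh}=\ProjKH\bar q$ in \cref{lemma:ImprovedErrorRate}, use the $L^2$-orthogonality of $\ProjKH$ to move the projection error onto $\bar\lambda=\alpha\bar q+\ControlOp^*\bar z$ and thereby square the order of the Lagrangian term, and invoke the regularity of $\bar q$ and $\bar z$ from \cref{prop:regularityOptimalSolution} together with~\eqref{eq:projection_error_piecewise_constant}. Your additional remark that cellwise averaging preserves the control bounds (so that $\ProjKH\bar q\in\Qsigmaad(0,1)$) is a detail the paper leaves implicit in the standing assumption $\ProjDiscControl\Qad(0,1)\subset\Qad(0,1)$.
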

\begin{proof}
	We apply \cref{lemma:ImprovedErrorRate} with $\ProjDiscControl = \ProjKH$ and
        $q_{kh} = \ProjDiscControl\bar{q}$.
        Using the adjoint state, we write the derivative of the Lagrangian as
	\begin{equation*}
	\partial_q\mathcal{L}(\bar{\nu},\bar{q},\bar{\mu})v = \int_0^1 \bar{\nu}\inner{\alpha\bar{q}+\ControlOp^*\bar{z},v}_{L^2(\omega)}\mbox{.}
	\end{equation*}
	Abbreviating \(\bar{\lambda} = \alpha\bar{q}+\ControlOp^*\bar{z}\) and applying
        orthogonality of $\ProjKH$ and $\bar{\nu} \in \R$ we obtain
	\begin{equation*}
	\partial_{q}\mathcal{L}(\bar{\nu},\bar{q},\bar{\mu})(\ProjDiscControl\bar{q}-\bar{q})
	= \bar{\nu}\int_0^1\inner{\bar{\lambda} - \ProjDiscControl\bar{\lambda},\ProjDiscControl\bar{q}-\bar{q}}_{L^2(\omega)}\mbox{.}
	\end{equation*}
	The improved regularity $\bar{q} \in \MPRHilbert{I}{L^2(\omega)}{H^1(\omega)}$
        from \cref{prop:regularityOptimalSolution} yields
        $\norm{\ProjDiscControl\bar{q} - \bar{q}}_{L^2(I; L^2)} \leq c(k+h)$
        due to~\eqref{eq:projection_error_piecewise_constant}, and the same estimates are
        valid for \(\bar{\lambda}\) employing the same arguments.
        This results in
	\begin{equation*}
	\partial_q\mathcal{L}(\bar{\nu},\bar{q},\bar{\mu})(q_{kh}-\bar{q})
        \leq \bar{\nu}\norm{\bar{\lambda}-\ProjDiscControl\bar{\lambda}}_{\Lcontrol{2}}\norm{\ProjDiscControl\bar{q}-\bar{q}}_{\Lcontrol{2}}
	\leq c\left(k + h\right)^2\mbox{,}
	\end{equation*}
	which, combined with the estimate for \(\ProjDiscControl\bar{q} - \bar{q}\),
        yields the result.
\end{proof}

\subsubsection{Cellwise linear control approximation}
The discrete space of controls is defined as follows
\begin{align*}
Q_h &= \left\{v \in C(\overline{\omega}) \constraintSet  v|_{K} \in \mathcal P_1(K)\;\text{for all }K \in \mathcal{T}^\omega_h\right\},\\
\Qsigma(0,1) &= \left\{v \in \Q(0,1) \constraintSet  v|_{I_m\times K} \in \mathcal{P}_0(I_m; Q_h) \;\text{for all }m = 1,2,\ldots,M\right\}\mbox{.}
\end{align*}
Let $\InterpolantCellwiseLinear \colon C(\overline{\omega}) \to Q_h$ denote the Lagrange interpolant.	
We abbreviate the time indices by $\mathcal{I}_k = \set{1,2,\ldots,M}$
and decompose the set $\mathcal{I}_k\times\mathcal{T}^\omega_h$ as
\begin{align*}
\mathcal{S}_{1} &= \set{(m,K) \in \mathcal{I}_k\times\mathcal{T}^\omega_h \constraintSet  \abs{\alpha\bar{q}+\ControlOp^*\bar{z}} > 0  \text{ a.e. in } I_m \times K},\\
\mathcal{S}_{2} &= \set{(m,K) \in \mathcal{I}_k\times\mathcal{T}^\omega_h \constraintSet  \alpha\bar{q}+\ControlOp^*\bar{z} = 0 \text{ a.e. in } I_m \times K},\\
\mathcal{S}_{3} &= \left(\mathcal{I}_k\times\mathcal{T}^\omega_h\right)\setminus \left(\mathcal{S}_{1} \cup \mathcal{S}_{2}\right).
\end{align*}
Under an additional assumption we obtain the following convergence result.
\begin{theorem}[Cellwise linear controls]\label{thm:optimalconvergence_cellwise_linear}
	Adapt the assumption of \cref{lemma:errorEstimatesControl_suboptimal} and suppose the temporal piecewise constant and spatial piecewise linear control discretization.
	Assume that there is $p > d + 1$ such that $u_d \in W_0^{1,p}(\Omega)$ and that
        there is $c > 0$ such that
	\begin{equation}\label{eq:assumptionCellwiseLinearControls}
	\sum_{(m,K) \in \mathcal{S}_{3}}k_m\abs{K} \leq ch.
	\end{equation}
	Then there is a constant $c > 0$ not depending on $h$ and $k$ such that
	\begin{equation*}
	\abs{\bar{\nu} - \bar{\nu}_{kh}} + \norm{\bar{q} - \bar{q}_{kh}}_{L^2(I\times\omega)}
        \leq c \abs{\log k} (k + h^{3/2-1/p})\mbox{.}
	\end{equation*}
\end{theorem}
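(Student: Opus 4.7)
The plan is to apply \cref{lemma:ImprovedErrorRate} with the concrete choice
\[
  q_{kh} \ldef \ProjK \InterpolantCellwiseLinear \bar{q} \in \Qsigmaad(0,1),
\]
which lies in the admissible set because the nodal Lagrange interpolation \(\InterpolantCellwiseLinear\) preserves the pointwise bounds \([q_a,q_b]\) (interpolating at the vertices) and the subsequent \(L^2\)-time-projection \(\ProjK\) preserves them as well. Existence of a sequence of local discrete optima converging in \(\R\times\Lcontrol{2}\), required to invoke \cref{lemma:ImprovedErrorRate}, is supplied by \cref{lemma:errorEstimatesControl_suboptimal} exactly as in the proof of \cref{thm:optimalconvergence_cellwise_constant}.

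The assumption \(u_d \in W_0^{1,p}(\Omega)\) with \(p > d+1\) enters the proof through improved regularity of the adjoint: parabolic regularity applied to~\eqref{eq:adjoint_state_equation} yields \(\bar{z} \in L^\infty(I; W^{1,p}(\Omega))\), and via the projection formula~\eqref{eq:projectionFormulaControl} (truncation is stable in \(W^{1,p}\)) also \(\bar{q}\) and hence \(\bar{\lambda} \ldef \alpha\bar{q} + \ControlOp^*\bar{z}\) belong to the same regularity class. By the Sobolev embedding \(W^{1,p}(\omega)\embedding C^{0,\beta}(\overline{\omega})\) with a positive Hölder exponent \(\beta\) depending on \(d\) and \(p\), both \(\bar{q}\) and \(\bar{\lambda}\) are continuous on space-time cells, with quantitative oscillation bounds in terms of \(h\).

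The two right-hand-side terms \(\norm{\bar{q} - q_{kh}}_{L^2(I\times\omega)}^2\) and
\(\partial_q\mathcal{L}(\bar{\nu},\bar{q},\bar{\mu})(q_{kh}-\bar{q}) = \bar{\nu}\int_0^1 \inner{\bar{\lambda}, q_{kh}-\bar{q}}_{L^2(\omega)}\D{t}\) are then split according to \(\mathcal{I}_k\times\mathcal{T}^\omega_h = \mathcal{S}_1 \cup \mathcal{S}_2 \cup \mathcal{S}_3\). On \(\mathcal{S}_1\), the sign condition~\eqref{eq:optimalitySignConditionControlPlusAdjoint} forces \(\bar{q}\) to be constant (equal to \(q_a\) or \(q_b\)) throughout each cell, and since both \(\InterpolantCellwiseLinear\) and \(\ProjK\) reproduce constants, we have \(q_{kh} = \bar{q}\) on \(\mathcal{S}_1\) and the contribution vanishes identically. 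On \(\mathcal{S}_2\), complementary slackness gives \(\bar{\lambda}\equiv 0\) so the Lagrangian-derivative term vanishes, and the \(L^2\)-interpolation error is absorbed into the \(O(k+h^2)\) contribution coming from the smoothness of \(\bar{q} = -\ControlOp^*\bar{z}/\alpha\) on \(\mathcal{S}_2\).

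The critical analysis takes place on \(\mathcal{S}_3\), where the structural assumption~\eqref{eq:assumptionCellwiseLinearControls} is used. In each cell \((m,K)\in\mathcal{S}_3\), by definition \(\bar{\lambda}\) must vanish at some point (otherwise the cell would lie in \(\mathcal{S}_1\) or \(\mathcal{S}_2\)), so Hölder continuity yields \(\norm{\bar{\lambda}}_{L^\infty(I_m\times K)} \leq c h^{\beta}\); an analogous \(L^\infty\)-interpolation estimate on \(W^{1,p}\) functions gives \(\norm{q_{kh}-\bar{q}}_{L^\infty(I_m\times K)} \leq ch^{\beta}\). Summing these pointwise bounds against the measure of the transition region, and using \(\sum_{\mathcal{S}_3} k_m |K| \leq ch\), produces
\[
 \norm{\bar{q}-q_{kh}}_{L^2(\mathcal{S}_3)}^2
 + \Abs{\int_{\mathcal{S}_3}\bar{\lambda}(q_{kh}-\bar{q})}
 \leq c h^{2\beta+1},
\]
which, plugged into \cref{lemma:ImprovedErrorRate} and combined with the \(O(k+h^2)\) contribution from \(\mathcal{S}_2\) and the \(\abs{\log k}(k+h^2)\) discretization error, yields the final rate after taking the square root. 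The main obstacle is the careful bookkeeping of the embedding exponent \(\beta\) in terms of \(p\) and \(d\), together with the standard but technical pointwise interpolation estimates on \(W^{1,p}\) functions, so that the two \(L^\infty\) estimates on \(\mathcal{S}_3\) combined with \(\abs{\mathcal{S}_3}\leq ch\) match exactly the advertised rate \(h^{3/2-1/p}\).
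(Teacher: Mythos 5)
Your overall architecture is exactly the paper's: choose $q_{kh}$ as the piecewise-constant-in-time, nodal-interpolated-in-space version of $\bar q$, feed it into \cref{lemma:ImprovedErrorRate}, and split both right-hand-side terms over $\mathcal{S}_1\cup\mathcal{S}_2\cup\mathcal{S}_3$, with $\mathcal{S}_1$ contributing nothing, $\mathcal{S}_2$ handled by the extra smoothness of $\bar q=-\ControlOp^*\bar z/\alpha$ there, and the structural assumption~\eqref{eq:assumptionCellwiseLinearControls} reserved for $\mathcal{S}_3$. The treatment of $\mathcal{S}_1$ and $\mathcal{S}_2$ and the admissibility of $q_{kh}$ are fine.

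The gap is in the quantitative step on $\mathcal{S}_3$, which is precisely where the rate is decided. You bound $\abs{\bar\lambda}$ and $\abs{q_{kh}-\bar q}$ in $L^\infty(I_m\times K)$ by $ch^{\beta}$ using the Morrey embedding $W^{1,p}(\omega)\embedding C^{0,\beta}(\overline\omega)$, whose exponent is $\beta = 1-d/p$, and then multiply by the measure $\sum_{\mathcal{S}_3}k_m\abs{K}\leq ch$. This yields $h^{2\beta+1}=h^{3-2d/p}$ for the squared quantities, hence the final rate $h^{3/2-d/p}$ after the square root. Since the paper works with $d\in\{2,3\}$, this is strictly weaker than the advertised $h^{3/2-1/p}$ (e.g.\ for $d=3$, $p=5$ you get $h^{0.9}$ instead of $h^{1.3}$); your closing assertion that the bookkeeping ``matches exactly the advertised rate'' does not hold for the argument as sketched. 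The dimension-dependent loss is avoided by \emph{not} going through $L^\infty$: use the local $L^p$ interpolation estimate $\norm{v-\InterpolantCellwiseLinear v}_{L^p(K)}\leq ch\abs{v}_{W^{1,p}(K)}$, pass to $L^2(K)$ at the cost of $\abs{K}^{1/2-1/p}$ by H\"older, and then aggregate the sum $\sum_{(m,K)\in\mathcal{S}_3}k_m\abs{K}^{1-2/p}\abs{v}^2_{W^{1,p}(K)}$ by H\"older's inequality with exponents $\bigl(p/(p-2),\,p/2\bigr)$ against the measure bound, which produces $h^2\cdot\bigl(\sum k_m\abs{K}\bigr)^{1-2/p}\norm{v}^2_{L^p(I;W^{1,p})}\leq ch^{3-2/p}$; the term $\partial_q\mathcal{L}(\bar\nu,\bar q,\bar\mu)(q_{kh}-\bar q)$ is treated the same way via Cauchy--Schwarz (together with the orthogonality of $\ProjK$ for the temporal part), using that $\bar\lambda$ vanishes somewhere on each $\mathcal{S}_3$ cell. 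A secondary point: your $L^\infty$ bound on $I_m\times K$ also silently requires H\"older continuity of $\bar\lambda$ in time in the uniform norm, which the stated regularity $\MPRHilbert{I}{L^2(\omega)}{H^1(\omega)}$ does not provide; the $L^p$-based aggregation sidesteps this as well.
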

\begin{proof}
	This proof adapts ideas from \cite[Section~5.2]{Meidner2008}.
	We set $\ProjDiscControl = \InterpolantCellwiseLinear\ProjK$ with $\ProjK$ and
        $\InterpolantCellwiseLinear$ as defined above
        to apply \cref{lemma:ImprovedErrorRate} with $q_{kh} = \ProjDiscControl\bar{q}$.
	We have to estimate the error term
        $\norm{\bar{q}-\InterpolantCellwiseLinear\ProjK\bar{q}}_{L^{2}(I\times\omega)}$.
        The temporal error is treated as before and for the spatial error
        $\ProjK(\bar{q}-\InterpolantCellwiseLinear\bar{q})$ we
        distinguish the three different cases: On
        $\mathcal{S}_1$, the local error contributions are zero
        because we either have $\bar{q} = q_a$ or $\bar{q} = q_b$,
        hence $\bar{q}-\InterpolantCellwiseLinear\ProjK\bar{q} \equiv 0$.
        On $\mathcal{S}_2$, we can use that $\bar{q}$ exhibits additional
        $L^2(I;H^2)$-regularity with an error estimate for $\InterpolantCellwiseLinear$
        that gives the rate $h^2$.        
	On $\mathcal{S}_3$, we exploit the improved regularity $\bar{q} \in C(I; W^{1,p}(\omega))$,
	see, e.g., \cite[Proposition~5.3]{Bonifacius2018},
	as well as \eqref{eq:assumptionCellwiseLinearControls} to obtain the rate $h^{3-2/p}$.
	The term $\partial_q\mathcal{L}(\bar{\nu},\bar{q},\bar{\mu})(\ProjDiscControl\bar{q}-\bar{q})$
	is estimated similarly, where we in addition use orthogonality of $\ProjK$.
	For further details on the proof, we refer to \cite[Theorem~5.21]{Bonifacius2018}.
\end{proof}
Similar assumptions to \eqref{eq:assumptionCellwiseLinearControls} have been used in
related publications for cellwise linear control discretization; see, e.g.,
\cite[Section~5.2]{Meidner2008} for a linear parabolic equation and
\cite[Theorem~4.5]{Casas2012d} for a quasilinear elliptic equation. The assumption is
justified for instance in the case that the boundary of the active set of $\bar{q}(t)$ is a
\(d-1\)-dimensional sub-manifold of \(\Omega\) at each \(t\in I\), which is often the case.

\section{Numerical examples}\label{sec:numerical_examples}

To validate the theoretical findings in practice, we consider
different numerical examples. All examples are implemented in \textsc{Matlab}. The state
constraint is incorporated into the objective functional by means of the augmented
Lagrangian method, where we employ the parameter updates suggested in
\cite[Proposition~2]{Bertsekas1976} and \cite[p.\ 404ff.]{Bertsekas1999}.
The nonlinear optimal control problems arising in each iteration of the method are then solved
using the trust-region semismooth Newton algorithm from \cite{Kunisch2015} in a monolithic
way, i.e.\ we optimize for the pair $(\nu,q)$ instead of empolying a bilevel
optimization. If the absolute value of the terminal constraint is smaller than $10^{-9}$,
the augmented Lagrangian method is stopped.

\subsection{Example with analytic reference solution}\label{sec:example1}
We consider the academic test problem
\begin{align*}
\Omega &= \omega = (0,1)^2,\quad \alpha = 1,
\quad \delta_0 = {1}/{2},\\
u_d(x) &= -2 \sin(\uppi x_1)\sin(\uppi x_2),
\quad u_0(x) = \sin(\uppi x_1)\sin(\uppi x_2),
\end{align*}
without control constraints. Moreover, we use the operator $-c\Lap$ with $c = 1/(2\pi^2)$ for convenience.
The optimal state and adjoint state are given by
\begin{equation*}
\bar{u}(t,x) = 2\left(\e^{-\bar{\nu} t} -\e^{\bar{\nu}(t-1)}\right)u_0(x),\quad
\bar{z}(t,x) = 4\e^{\bar{\nu}(t-1)}u_0(x),
\end{equation*}             
with optimal time $T = \bar{\nu} = \log(2)$.
Moreover, it can be verified that the second order sufficient optimality condition is 
satisfied on  $L^2(I\times\omega)$.
Since no control constraints are active this situation corresponds to the variational control discretization. 
We observe linear order of convergence with respect to the temporal and quadratic order of convergence with respect to the spatial discretization; see \cref{fig:example1_convergence}.

\begin{figure}
	\begin{center}
		\includegraphics{./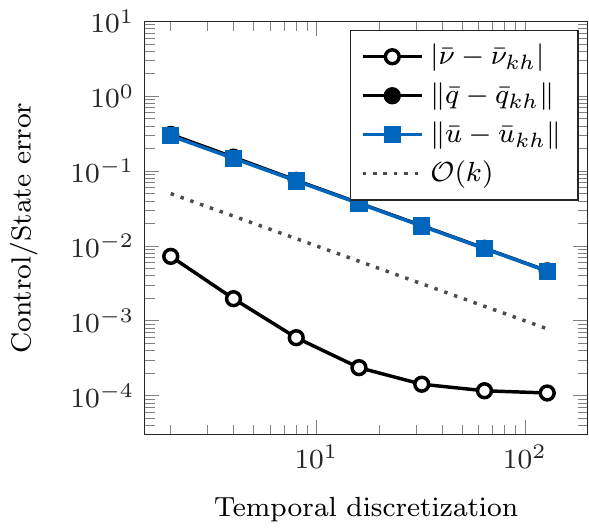}
		\includegraphics{./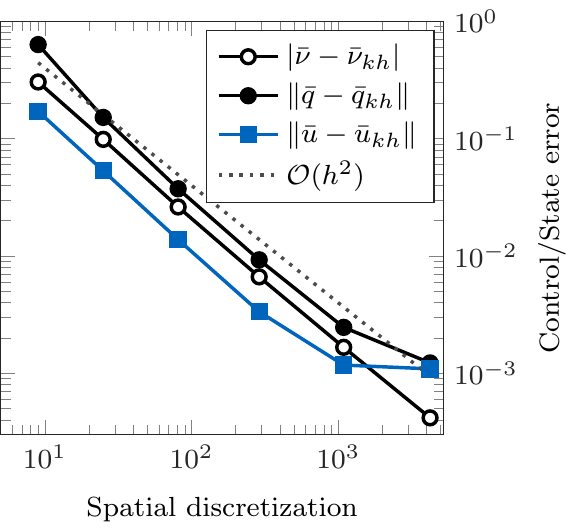}
	\end{center}
	\caption{Discretization error for Example~\ref{sec:example1} with variational control discretization and refinement of the time interval for $N = 16641$ nodes (left) and refinement of the spatial discretization for $M = 512$ time steps (right).}
	\label{fig:example1_convergence}
\end{figure}

\subsection{Example with purely time-dependent control}\label{sec:example2}
Next, we consider a time-optimal control problem with purely time-dependent controls with fixed spatially dependent functions.
Let
\begin{align*}
\Omega &= (0,1)^2\mbox{,}\quad
\omega_1 = (0, 0.5)\times(0,1),\quad \omega_2 = (0.5, 1)\times(0,0.5)\mbox{,}\quad \alpha = 10^{-2},\\
\ControlOp &\colon \R^2 \to L^2(\Omega),\quad \ControlOp q = q_1 \chi_{\omega_1} + q_2 \chi_{\omega_2},\\
\Qad(0,1) &= \{q \in L^2(I; \R^2) \constraintSet  -1.5 \leq q \leq 0\}\mbox{,}\\
u_0(x) &= 4\sin(\pi x_1^2)\sin(\pi x_2^3),\quad u_d(x) = 0, \quad \delta_0 = {1}/{10}\mbox{,}
\end{align*}
where $\chi_{\omega_1}$ and $\chi_{\omega_2}$ denote the characteristic functions on
$\omega_1$ and $\omega_2$.
The spatial mesh is chosen such that the boundaries of $\omega_1$ and $\omega_2$ coincide
with edges of the mesh, which ensures that \(B\) can be easily implemented on the discrete
level. The solutions of the discrete problem are
compared to a discrete solution calculated on a sufficiently fine mesh since no
analytic expression is available. The optimal time is $T \approx 1.79931$. We
observe linear convergence with respect to the temporal mesh size and quadratic order of
convergence with respect to the spatial mesh size, as predicted
by \cref{cor:optimalconvergence_parameter}.
\begin{figure}	
	\begin{center}
		\includegraphics{./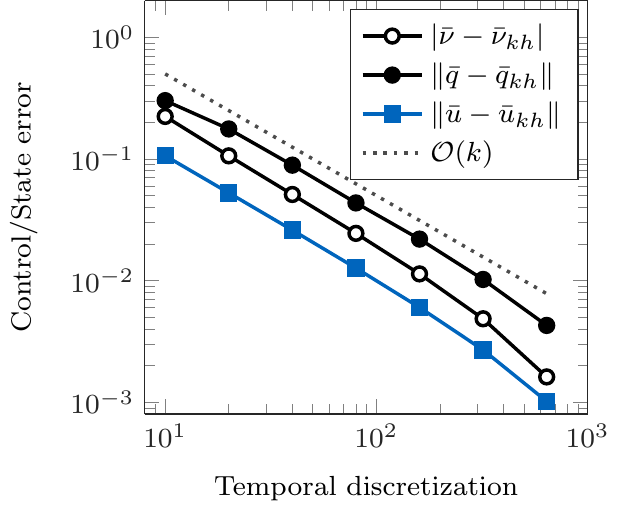}
		\includegraphics{./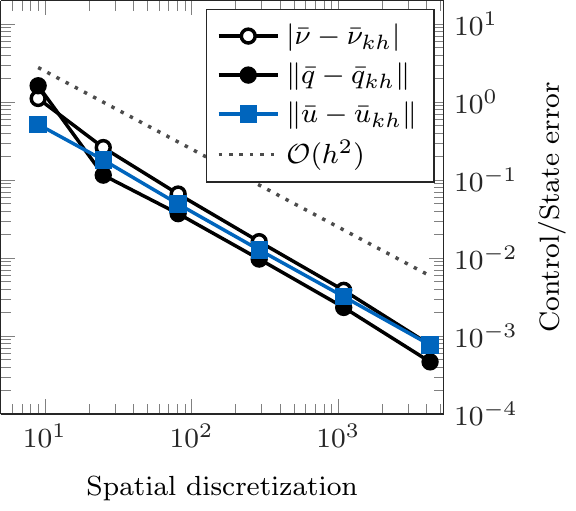}
	\end{center}
	\caption{Discretization error for Example~\ref{sec:example2} with variational control discretization and refinement of the time interval for $N = 1089$ nodes (left) and refinement of the spatial discretization for $M = 320$ time steps (right).}
	\label{fig:example2_convergence}
\end{figure}

To assess the validity of the second order sufficient optimality hypothesis, we verify
the scalar condition of \cref{lemma:ssc_reduces_to_scalar_condition} for the discrete problem. 
Since the linear system~\eqref{eq:ssc_reduces_to_scalar_condition_system} defines a symmetric but not a positive definite matrix, we calculate a solution using MINRES without assembling the matrix.
We observe that for all choices of the cost parameter $\alpha$ the condition is
satisfied on the discrete level; see \cref{table:example2_verificationssc}. Note that the
SSC for the discrete problem does not guarantee that the SSC for the continuous problem
holds. However, the fact that the numbers are robust with respect to mesh refinement can
serve as an indication for the continuous problem.
In accordance with \cref{proposition:lower_bound_coercivity}, we observe that the lower
bound of the coercivity constant~\eqref{eq:ssc_schur_scalar_condition} from
\cref{lemma:ssc_reduces_to_scalar_condition} decreases with decreasing $\alpha$.
In contrast, the constant \(\bar{\gamma}\) increases. This can be explained as follows: as
the size of the critical cone
$\CriticalCone$ decreases as $\alpha$ tends to zero and we fix $\delta\nu = 1$, the
variable $\delta\bar{q}$ has to counteract the decrease of $C_{\bar{q}}$ in order to
satisfy the linear constraint $g'(\bar{\nu}_{kh},\bar{q}_{kh})(1,\delta\bar{q}) = 0$
resulting in an increase of the norm of $\delta\bar{q}$.

\begin{table}
	\begin{center}
		\begin{footnotesize}
			\begin{tabularx}{\textwidth}{XX  rr rr rr rr}
				\toprule
				\(M\) & \(N\) & \multicolumn{2}{c}{$\alpha = 1$} & \multicolumn{2}{c}{$\alpha = 0.1$} & \multicolumn{2}{c}{$\alpha = 0.01$} & \multicolumn{2}{c}{$\alpha = 0.001$}\\ 
				\midrule
$40$ & $1089$ & $7.44$ & $4.55_{-1}$ & $17.3$ & $4.96_{-2}$ & $1.52_{+3}$ & $6.13_{-3}$ & $2.42_{+6}$ & $6.15_{-4}$\\
$80$ & $1089$ & $7.55$ & $4.56_{-1}$ & $17.7$ & $4.90_{-2}$ & $2.51_{+3}$ & $6.05_{-3}$ & $2.96_{+6}$ & $6.06_{-4}$\\ 
$160$ & $1089$ & $7.55$ & $4.53_{-1}$ & $18.1$ & $4.88_{-2}$ & $2.51_{+3}$ & $6.01_{-3}$ & $1.37_{+6}$ & $6.02_{-4}$\\ 
$320$ & $1089$ & $7.51$ & $4.51_{-1}$ & $18.3$ & $4.86_{-2}$ & $2.47_{+3}$ & $5.99_{-3}$ & $5.34_{+5}$ & $6.00_{-4}$\\ 
\midrule
$640$ & $25$ & $8.95$ & $5.53_{-1}$ & $19.2$ & $5.81_{-2}$ & $1.68_{+3}$ & $6.83_{-3}$ & $1.95_{+5}$ & $6.84_{-4}$\\
$640$ & $81$ & $7.75$ & $4.74_{-1}$ & $18.4$ & $5.08_{-2}$ & $2.21_{+3}$ & $6.18_{-3}$ & $2.18_{+5}$ & $6.19_{-4}$\\ 
$640$ & $289$ & $7.55$ & $4.54_{-1}$ & $18.3$ & $4.90_{-2}$ & $2.40_{+3}$ & $6.02_{-3}$ & $2.10_{+5}$ & $6.03_{-4}$\\ 
$640$ & $1089$ & $7.51$ & $4.49_{-1}$ & $18.2$ & $4.85_{-2}$ & $2.47_{+3}$ & $5.98_{-3}$ & $2.95_{+5}$ & $5.99_{-4}$\\ 
				\midrule
				\multicolumn{2}{p{3cm}}{Inactive constraints} & \multicolumn{2}{c}{$96\%$} & \multicolumn{2}{c}{$62\%$} & \multicolumn{2}{c}{$5\%$} & \multicolumn{2}{c}{$< 1\%$}\\
				\bottomrule
			\end{tabularx}
\vspace{.2em}
		\end{footnotesize}
	\end{center}
	\caption{Numerical verification of second order sufficient optimality condition
          for Example~\ref{sec:example2}. Table shows the quantity \(\bar{\gamma}\) of
          \cref{lemma:ssc_reduces_to_scalar_condition}, the lower bound on the coercivity
          constant for different temporal, and spatial degrees of freedoms and
          cost parameter $\alpha$.}
	\label{table:example2_verificationssc}
\end{table}

\subsection{Example with distributed control on subdomain}\label{sec:example3}
Last, we consider an example with distributed control on a subset of the domain. As before we compare to a reference solution obtained numerically on a fine grid. The problem data is
\begin{align*}
\Omega &= (0,1)^2\mbox{,} \quad
\omega = (0, 0.75)^2\mbox{,} \quad \alpha = 10^{-2},\\
\Qad(0,1) &= \{q \in L^2(I\times\omega) \constraintSet  -5 \leq q \leq 0\}\mbox{,}\\
u_d(x) &= -2\min\set{x_1, 1-x_1, x_2, 1-x_2}\mbox{,} \quad \delta_0 = {1}/{10},\\
u_0(x) &= 4\sin(\pi x_1^2) \sin(\pi x_2)^3\mbox{.}
\end{align*}
We consider the operator $-c\Lap$ with $c = 0.03$. Note, that the control acts only on a
subset $\omega \subsetneq \Omega$. Moreover, the control constraints as well as the
cost parameter are chosen in a way such that the constraints on the control are
active in a large region.

The optimal time we obtain numerically is approximately $T \approx 1.22198$. 
The control is discretized by cellwise constant functions in space. In accordance with \cref{thm:optimalconvergence_cellwise_constant} we observe linear convergence in time and space for the control variable; see \cref{fig:example3_convergence}.
In contrast, for the optimal time and the state we obtain quadratic order of convergence
in $h$, which is better than predicted by the given theory.
However, we expect that one can also prove full order of convergence for the time and
state variables and an appropriately post-processed optimal control, which is reconstructed
in terms of the adjoint state using the pointwise projection
formula~\eqref{eq:projectionFormulaControl}; see, e.g., \cite{Meidner2008,Meyer2004}.
As before, we assess the validity of the second order sufficient optimality hypothesis, by verifying
the scalar condition of \cref{lemma:ssc_reduces_to_scalar_condition} for the discrete problem. 
For all choices of the cost parameter $\alpha$, we observe that the condition is satisfied; see \cref{table:example3_verificationssc}.

\begin{figure}
	\begin{center}
		\includegraphics{./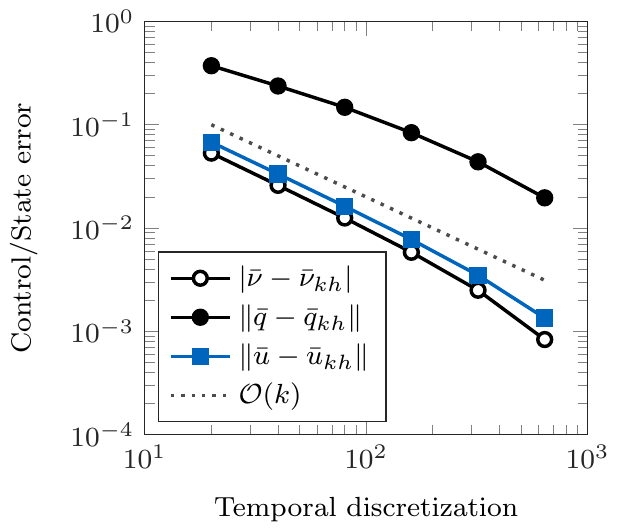}
		\includegraphics{./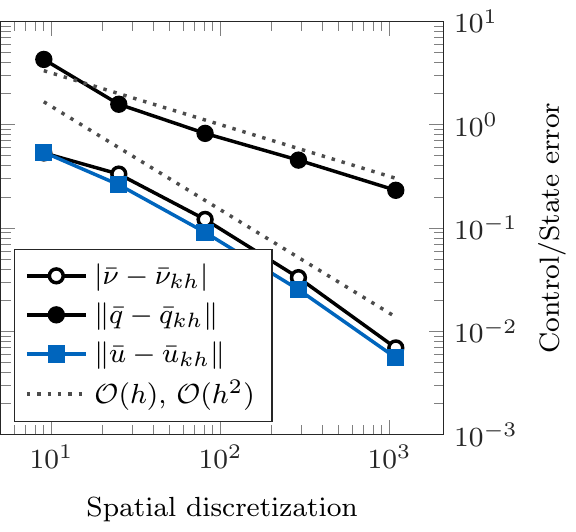}
	\end{center}
	\caption{Discretization error for Example~\ref{sec:example3} with cellwise constant control discretization and refinement of the time interval for $N = 1089$ nodes (left) and refinement of the spatial discretization for $M = 320$ time steps (right).}
	\label{fig:example3_convergence}
\end{figure}
\begin{table}
	\begin{center}
		\begin{footnotesize}				
			\begin{tabularx}{\textwidth}{XX  rr rr rr rr}
				\toprule
				$M$ & $N$ & \multicolumn{2}{c}{$\alpha = 1$} & \multicolumn{2}{c}{$\alpha = 0.1$} & \multicolumn{2}{c}{$\alpha = 0.01$} & \multicolumn{2}{c}{$\alpha = 0.001$}\\ 
				\midrule
				$20$ & $4225$ & $19.7$ & $2.46_{-1}$ & $27.5$ & $1.55_{-2}$ & $4.64_{+2}$ & $3.85_{-3}$ & $1.62_{+4}$ & $4.23_{-4}$\\ 
				$40$ & $4225$ & $17.3$ & $2.29_{-1}$ & $27.1$ & $1.51_{-2}$ & $4.57_{+2}$ & $3.77_{-3}$ & $1.52_{+4}$ & $4.13_{-4}$\\ 
				$80$ & $4225$ & $16.1$ & $2.20_{-1}$ & $26.9$ & $1.49_{-2}$ & $4.57_{+2}$ & $3.72_{-3}$ & $1.47_{+4}$ & $4.09_{-4}$\\ 
				$160$ & $4225$ & $15.6$ & $2.16_{-1}$ & $26.9$ & $1.49_{-2}$ & $4.58_{+2}$ & $3.71_{-3}$ & $1.48_{+4}$ & $4.07_{-4}$\\ 
				\midrule
				$320$ & $25$ & $2.74$ & $5.63_{-2}$ & $23.6$ & $1.20_{-2}$ & $3.31_{+2}$ & $2.68_{-3}$ & $4.00_{+4}$ & $2.96_{-4}$\\ 
				$320$ & $81$ & $9.97$ & $1.52_{-1}$ & $26.5$ & $1.34_{-2}$ & $3.81_{+2}$ & $3.27_{-3}$ & $1.75_{+4}$ & $3.65_{-4}$\\ 
				$320$ & $289$ & $13.7$ & $1.97_{-1}$ & $26.5$ & $1.44_{-2}$ & $4.46_{+2}$ & $3.59_{-3}$ & $1.58_{+4}$ & $3.95_{-4}$\\ 
				$320$ & $1089$ & $14.9$ & $2.10_{-1}$ & $26.8$ & $1.47_{-2}$ & $4.60_{+2}$ & $3.68_{-3}$ & $1.39_{+4}$ & $4.03_{-4}$\\ 
				\midrule
				\multicolumn{2}{p{3cm}}{Inactive constraints} & \multicolumn{2}{c}{$98\%$} & \multicolumn{2}{c}{$67\%$} & \multicolumn{2}{c}{$19\%$} & \multicolumn{2}{c}{$6\%$}\\
				\bottomrule
			\end{tabularx}
\vspace{.2em}
		\end{footnotesize}
	\end{center}
	\caption{Numerical verification of second order sufficient optimality condition
          for Example~\ref{sec:example3}. Table shows the quantity \(\bar{\gamma}\) of
          \cref{lemma:ssc_reduces_to_scalar_condition}, the lower bound on the coercivity
          constant for different temporal and spatial degrees of freedom, and the cost
          parameter $\alpha$.}
	\label{table:example3_verificationssc}
\end{table}

\appendix

\section{Stability estimates}
\begin{proposition}\label{prop:stabilityS}
	There exists a constant $c > 0$ such that for all
	$\nu > 0$, $q \in L^2(I; H^{-1})$, and initial conditions $u_0 \in L^2$ it holds
	\begin{align*}
	\norm{u}_{C([0,1]; L^2)} +  \sqrt{\nu}\norm{u}_{L^2(I; H^1)} &\leq c\left(\sqrt{\nu} \norm{q}_{L^2(I;H^{-1})} + \norm{u_0}_{L^2}\right)\mbox{,}\displaybreak[1]\\
	\norm{\delta u}_{C([0,1]; L^2)} + \sqrt{\nu}\norm{\delta u}_{L^2(I; H^1)} &\leq c\frac{\abs{\delta\nu}}{\sqrt{\nu}}\left(\norm{q}_{L^2(I;H^{-1})} + \norm{u}_{L^2(I;H^{1})}\right)\\
	&\quad + \sqrt{\nu}\norm{\delta q}_{L^2(I;H^{-1})}\nonumber\mbox{,}\displaybreak[1]\\
	\norm{\delta\tilde{u}}_{C([0,1]; L^2)} +  \sqrt{\nu}\norm{\delta\tilde{u}}_{L^2(I; H^1)} &\leq c\frac{\abs{\delta\nu}}{\sqrt{\nu}}\left(\norm{\delta q}_{L^2(I; H^{-1})} + \norm{\delta u}_{L^2(I; H^{1})}\right)\mbox{,}
	\end{align*}
	where $u = S(\nu,q)$, $\delta u = S'(\nu,q)(\delta\nu, \delta q)$ and $\delta\tilde{u} = S''(\nu,q)[\delta\nu, \delta q]^2$
	for $\delta\nu \in \R$ and $\delta q \in L^2(I; H^{-1})$.
	Furthermore, for $q_1, q_2 \in \Qad(0,1)$ we have
	\begin{gather*}
	\norm{u_1-u_2}_{C([0,1]; L^2)} + \sqrt{\nu_1}\norm{u_1-u_2}_{L^2(I; H^1)}\leq 
	c\left(\abs{\nu_1-\nu_2}
	+ \norm{q_1-q_2}_{L^2(I;H^{-1})}\right)\mbox{,}\\
	\norm{\delta u_1 - \delta u_2}_{C([0,1]; L^2)} \leq c\left(\abs{\nu_1-\nu_2} + \norm{q_1-q_2}_{L^2(I;H^{-1})}\right)\left(\abs{\delta\nu} + \norm{\delta q}_{L^2(I;H^{-1})}\right)\mbox{,}
	\end{gather*}
	where $u_i = S(\nu_i,q_i)$ and $\delta u_i = S'(\nu_i,q_i)(\delta\nu, \delta q)$ for $i = 1,2$ and
	\begin{align*}
	c_0 = c_0(\nu_1,\nu_2) &= c/\sqrt{\nu_1} \max \set{1, 1/\sqrt{\nu_2}, \nu_2},\\
	c_1 = c_1(\nu_1,\nu_2) &= c/\sqrt{\nu_1} \max \set{1, 1/\nu_1, 1/(\nu_1\sqrt{\nu_2}), 1/\nu_2, 1/\nu_2^{3/2}, \nu_2/\nu_1}.
	\end{align*}
	The constant $c > 0$ depends exclusively on Poincar\'e's constant, $\Qad$, and $u_0$.
\end{proposition}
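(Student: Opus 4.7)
My plan is to prove each estimate in turn by standard parabolic energy arguments applied to the transformed state equation on the unit interval $(0,1)$, paying close attention to how the parameter $\nu$ enters the constants.

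For the first estimate on $u = S(\nu,q)$, I would test the equation $\partial_t u - \nu\Lap u = \nu B q$ with $u$ itself in $L^2(\Omega)$. Using integration in time this yields
\[
\tfrac{1}{2}\norm{u(t)}_{L^2}^2 + \nu\norm{\nabla u}_{L^2(0,t;L^2)}^2
= \tfrac{1}{2}\norm{u_0}_{L^2}^2 + \nu\int_0^t \pair{Bq,u} \D{s},
\]
and absorbing the right-hand side by Young's inequality (using the $H^{-1}$--$H^1_0$ duality) gives the first claim after taking the supremum over $t \in [0,1]$. For the linearized equation, the right-hand side is $\delta\nu(Bq+\Lap u) + \nu B\delta q$, and I would test with $\delta u$. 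Here the term $\delta\nu\pair{\Lap u, \delta u} = -\delta\nu\,(\nabla u,\nabla \delta u)$ is bounded using Cauchy--Schwarz, producing the factor $\abs{\delta\nu}/\sqrt{\nu}$ once it is balanced against the viscosity term $\nu\norm{\nabla \delta u}_{L^2(L^2)}^2$. The second linearized estimate follows identically from testing with $\delta\tilde{u}$ and treating the two forcing terms analogously, using the linearized stability already established.

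For the Lipschitz estimates, I would write $w = u_1 - u_2$, which satisfies
\[
\partial_t w - \nu_1 \Lap w = (\nu_1-\nu_2)\Lap u_2 + \nu_1 B(q_1-q_2) + (\nu_1-\nu_2)B q_2,\quad w(0)=0.
\]
Testing with $w$ and using Cauchy--Schwarz/Young gives the first Lipschitz bound, where the factor $\abs{\nu_1-\nu_2}/\sqrt{\nu_1}$ arises from the $\Lap u_2$ term (controlled by the $L^2(I;H^1)$-bound on $u_2$, which is in turn bounded by $1/\sqrt{\nu_2}$ times data). The dependence structure of $c_0(\nu_1,\nu_2)$ is exactly what this bookkeeping produces. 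For the difference $\delta u_1 - \delta u_2$, the equation satisfied by this difference has the form
\[
\partial_t(\delta u_1 - \delta u_2) - \nu_1 \Lap(\delta u_1 - \delta u_2)
= (\nu_1-\nu_2)\Lap \delta u_2 + \delta\nu\,\Lap(u_1-u_2) + \delta\nu B(q_1-q_2) + \nu_1 B\delta q - \nu_2 B\delta q + (\nu_1-\nu_2)(Bq_1 + \Lap u_1)\cdot(\ldots),
\]
after carefully subtracting the two linearized equations; each right-hand side contribution is estimated using the already established bounds on $u_i$, $\delta u_i$, and the difference $u_1-u_2$.

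The main technical obstacle is bookkeeping: several estimates contain the term $\Lap u$ or $\Lap \delta u$ in the forcing, which forces an $L^2(I;H^1)$-norm of a previously estimated object to enter, each such use introducing additional inverse powers of $\nu$. To obtain the stated constants $c_0(\nu_1,\nu_2)$ and $c_1(\nu_1,\nu_2)$, I would systematically track which terms produce factors $1/\sqrt{\nu_i}$, $1/\nu_i$, or $1/\nu_i^{3/2}$ and take the maximum at the end. The result follows from standard parabolic theory (see, e.g., \cite{Dautray1992} or \cite{Evans2010}), and since the argument is a routine adaptation of these techniques to the transformed problem, the proof is essentially computational.
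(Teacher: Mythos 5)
Your proposal is correct and follows essentially the same route as the paper, which simply invokes standard parabolic energy estimates (testing each equation from \cref{lemma:control_to_state_differentiable} with its own solution, using the $H^{-1}$--$H^1_0$ duality and Young's inequality, and tracking the powers of $\nu$). The only blemish is your displayed equation for $\delta u_1-\delta u_2$, whose correct right-hand side is $(\nu_1-\nu_2)\Lap\delta u_2+\delta\nu\,\Lap(u_1-u_2)+\delta\nu\,B(q_1-q_2)+(\nu_1-\nu_2)B\delta q$ with no additional term; this does not affect the validity of the argument you describe.
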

\begin{proof}
	This follows from standard energy estimates, see, e.g., \cite[\S26]{Wloka1987},
	and the equations for state, linearized state and second linearized state; see \cref{lemma:control_to_state_differentiable}.
\end{proof}

\section{Discretization error estimates}
\label{appendix:discretization_error_estimates}
In this section we collect
error estimates for space-time finite element discretizations.

\begin{lemma}\label{lemma:errorEstimatesStateEquation}
	Let $\nu \in \Rplus$ and $f \in L^2((0,1); L^2)$. For the solution $u = u(\nu,f)$ to the state equation with right-hand side $f$ and the discrete solution $u_{kh} = u_{kh}(\nu,f)$ to equation~\eqref{eq:stateEquationDiscrete} with right-hand side $f$ it holds
	\begin{align}
	\norm{u - u_{kh}}_{L^2(I; L^2)} &\leq c\left(k\norm{\partial_t u}_{L^2(I; L^2)} + h^2 \norm{\Lap u}_{L^2(I; L^2)}\right)\label{eq:errorEstimatesStateEquationL2L2}\mbox{,}\\
	\norm{\nabla u - \nabla u_{kh}}_{L^2(I; L^2)} &\leq c(k^{1/2}+h)\left(\norm{\partial_t u}_{L^2(I; L^2)} + \norm{\Lap u}_{L^2(I; L^2)}\right)\label{eq:errorEstimatesStateEquationL2H1}\mbox{,}
	\end{align}
	where the constant $c$ is independent of $\nu, k, h, f, u_0$, and $u$.
\end{lemma}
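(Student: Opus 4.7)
The proof would follow standard techniques for space-time finite element discretizations of parabolic equations, as developed in \cite{Meidner2008a,Meidner2011a}. The plan is to split the error via an intermediate semi-discrete (in time) solution $u_k$, defined as the dG(0) Galerkin approximation on the continuous spatial space (i.e., the solution corresponding to the variable-step implicit Euler scheme), and write
\begin{equation*}
u - u_{kh} = (u - u_k) + (u_k - u_{kh}).
\end{equation*}
This decouples the time and space discretization errors and reduces the problem to two separate estimates.

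For the temporal error $u - u_k$, the first step would be to establish Galerkin orthogonality using the bilinear form $\B$ (extended to the continuous space) together with the fact that $u$ satisfies the same variational equation. To obtain the $L^2(I;L^2)$-estimate of order $\mathcal{O}(k \norm{\partial_t u}_{L^2(I;L^2)})$, I would use an Aubin–Nitsche type duality argument: introduce an auxiliary dual problem in time, exploit the regularity of its solution, and invoke approximation properties of the projection $\ProjK$ onto piecewise constants in time. For the $L^2(I;H^1)$-estimate, the order is only $\mathcal{O}(k^{1/2})$ (a well-known feature of dG(0)); this follows from a direct energy estimate, testing with $u-u_k$, using the jump terms at the interfaces, and exploiting $\norm{u - \ProjK u}_{L^2(I;L^2)} \leq ck\norm{\partial_t u}_{L^2(I;L^2)}$.

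For the spatial error $u_k - u_{kh}$, both functions satisfy the same discrete time-stepping scheme on their respective spatial spaces, so one can reduce the analysis to a sequence of elliptic problems (one per time step). I would introduce the standard Ritz projection $R_h : H^1_0 \to V_h$ and split
\begin{equation*}
u_k - u_{kh} = (u_k - R_h u_k) + (R_h u_k - u_{kh}).
\end{equation*}
The first term is controlled by the classical Aubin–Nitsche estimates ($\mathcal{O}(h^2)$ in $L^2$ and $\mathcal{O}(h)$ in $H^1$), using convexity of $\Omega$ and $\Lap u \in L^2(I;L^2)$. The second, purely discrete term satisfies a discrete equation whose right-hand side is the consistency error $(R_h - \Id)u_k$; applying the stability estimates of \cref{prop:stabilityEstimateDiscreteState} with appropriate test functions bounds it by the first term, up to terms in $\norm{\partial_t u}_{L^2(I;L^2)}$ and $\norm{\Lap u}_{L^2(I;L^2)}$.

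The main delicate point is to ensure that the constants do not depend on $\nu$, which requires carefully tracking how $\nu$ enters the energy and duality arguments; one has to balance $\nu$-weighted norms against $\nu^{-1}$-weighted norms so that the dependencies cancel. Beyond that, the argument is essentially a synthesis of the now-standard results for dG(0) in time combined with continuous linear FEM in space, and the final estimates follow by combining the two contributions via the triangle inequality.
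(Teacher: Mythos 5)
Your proposal is correct and follows essentially the same route as the paper, which simply refers to \cite{Meidner2008a}: the argument there is exactly the splitting into a temporal error $u-u_k$ (handled by Galerkin orthogonality, a duality argument for the $L^2(I;L^2)$ bound and an energy argument for the $\mathcal{O}(k^{1/2})$ gradient bound) and a spatial error $u_k-u_{kh}$ (handled via the Ritz projection and discrete stability). Your remark about tracking the $\nu$-dependence is precisely the only point the paper adds to the citation.
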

\begin{proof}
	The estimates are shown as in \cite{Meidner2008a}, where we clearly see that the constants are independent of $\nu$.
\end{proof}

\begin{lemma}\label{lemma:errorEstimatesStateEquationLinf}
	Let $\nu \in \Rplus$ and $f \in L^\infty((0,1); L^2)$. For the solution $u = u(\nu,f)$ to the state equation with right-hand side $f$ and the discrete solution $u_{kh} = u_{kh}(\nu,f)$ to equation~\eqref{eq:stateEquationDiscrete} with right-hand side $f$ it holds
	\begin{align}	
	\norm{u(1) - u_{kh}(1)}_{L^2} &\leq c\abs{\log k}\left(k + h^2\right)\left((1 + \nu)\norm{f}_{L^\infty(I; L^2)} + \nu^{-1}\norm{u_0}_{L^2}\right)\label{eq:errorEstimatesStateEquationLinfL2}\mbox{,}\\
	\norm{u(1) - u_{kh}(1)}_{L^2} &\leq c\abs{\log k}\left(k + h^2\right)(1 + \nu)\left(\norm{f}_{L^\infty(I; L^2)} + \norm{\Lap u_0}_{L^2}\right)\label{eq:errorEstimatesStateEquationLinfL2_stable}\mbox{,}
	\end{align}
	where the constant $c$ is independent of $\nu, k, h, f, u_0$, and $u$.
\end{lemma}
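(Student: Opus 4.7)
The proof follows the classical duality argument for pointwise-in-time $L^2$ error estimates of the dG(0)cG(1) scheme, in the spirit of \cite{Meidner2008a,Meidner2011a}. The logarithmic factor is a characteristic signature of this analysis and arises when approximating a dual problem whose terminal data is merely $L^2$.

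I would begin with the identity
\begin{equation*}
\norm{u(1) - u_{kh}(1)}_{L^2} = \sup_{\psi\in L^2,\,\norm{\psi}_{L^2}=1} \left(u(1) - u_{kh}(1),\,\psi\right),
\end{equation*}
and for each such $\psi$ introduce the continuous backward dual problem $-\partial_t z - \nu\Lap z = 0$ in $I$ with $z(1) = \psi$, together with its Galerkin approximation $z_{kh}\in\Xkh$ (defined by the adjoint bilinear form with terminal data $\psi$). Using the definition of $z$ one represents $(u(1)-u_{kh}(1),\psi) = \B(\nu,u-u_{kh},z)$, and by Galerkin orthogonality for the state equation one can replace $z$ in this expression by $z-z_{kh}$, or dually project $u-u_{kh}$, to obtain a representation that pairs the error in the state against the error in the dual.

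The central technical step is to estimate the dual error. Since $\psi$ is only in $L^2$, the backward solution $z$ satisfies parabolic smoothing estimates of the form $\norm{\Lap z(t)}_{L^2} \lesssim \nu^{-1}(1-t)^{-1}\norm{\psi}_{L^2}$, and similarly for $\partial_t z$. Squaring and integrating these bounds over $I$ produces logarithmic divergences at $t=1$; on the discrete level, integration must effectively be cut off at distance $k$ from the endpoint, which yields the factor $\abs{\log k}$. Combining this with the standard dG(0)cG(1) approximation results for the dual and with the $L^2(I;L^2)$ and $L^2(I;H^1)$ state error estimates of \cref{lemma:errorEstimatesStateEquation} produces the overall rate $\abs{\log k}(k+h^2)$ in terms of suitable norms of $\Lap u$ and $\partial_t u$.

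For the two variants of the estimate, the difference lies only in how $\norm{\Lap u}_{L^2(I;L^2)}$ (and similar norms of $u$) are bounded. For~\eqref{eq:errorEstimatesStateEquationLinfL2} I would exploit the smoothing of the forward semigroup, namely $\norm{\Lap e^{t\nu\Lap} u_0}_{L^2}\lesssim (t\nu)^{-1}\norm{u_0}_{L^2}$, which after integration in time gives the term $\nu^{-1}\norm{u_0}_{L^2}$; the contribution of $f$ is bounded via \cref{prop:stabilityS} and maximal parabolic regularity, yielding the $(1+\nu)\norm{f}_{L^\infty(I;L^2)}$ prefactor. For~\eqref{eq:errorEstimatesStateEquationLinfL2_stable}, the stronger assumption $\Lap u_0\in L^2$ allows us to bypass the smoothing step and directly estimate $\norm{\Lap u}_{L^2(I;L^2)} \leq c\bigl(\norm{\Lap u_0}_{L^2} + (1+\nu)\norm{f}_{L^\infty(I;L^2)}\bigr)$, which removes the inverse power of $\nu$.

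The main obstacle is the careful bookkeeping of the $\nu$-dependence throughout the duality argument, since the smoothing constants for both the primal semigroup (forward, near $t=0$) and the dual semigroup (backward, near $t=1$) involve $\nu^{-1}$. The logarithmic factor emerges naturally once these are combined with the temporal step size $k$, but keeping the $\nu$ factors in the two final forms $(1+\nu)$ versus $\nu^{-1}$ clean requires splitting the contributions of $u_0$ and $f$ separately and applying the appropriate stability bound from \cref{prop:stabilityS} to each piece.
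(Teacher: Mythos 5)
Your overall strategy --- a duality argument with a backward dual problem, parabolic smoothing to generate the $\abs{\log k}$ factor, and a superposition of the $u_0=0$ and $f=0$ cases to separate the $(1+\nu)\norm{f}_{L^\infty(I;L^2)}$ and $\nu^{-1}\norm{u_0}_{L^2}$ contributions --- is exactly the argument underlying the paper's proof, which is itself only a citation: \eqref{eq:errorEstimatesStateEquationLinfL2_stable} is taken from \cite[Section~5]{Meidner2011a}, and \eqref{eq:errorEstimatesStateEquationLinfL2} is obtained by superposing the case $u_0=0$ (which is \eqref{eq:errorEstimatesStateEquationLinfL2_stable}) with the homogeneous case $f=0$ handled by \cite[Theorems~1 and~2]{Luskin1982}. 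So you are reconstructing the cited proofs rather than taking a different route. Two heuristics in your sketch are off, however. First, the $\abs{\log k}$ factor does not come from ``squaring and integrating'' the smoothing bound $\norm{\Lap z(t)}_{L^2}\lesssim \nu^{-1}(1-t)^{-1}\norm{\psi}_{L^2}$: the square integrates to a $k^{-1}$-type divergence after cutoff, not a logarithm. The logarithm arises from the \emph{first} power of the smoothing weight, e.g.\ from sums of the form $\sum_m k_m\,(1-t_{m-1})^{-1}\sim\abs{\log k}$ in the weighted stability and approximation estimates for the dual solution. Second, the term $\nu^{-1}\norm{u_0}_{L^2}$ cannot be obtained by ``integrating in time'' the bound $\norm{\Lap\e^{t\nu\Lap}u_0}_{L^2}\lesssim(t\nu)^{-1}\norm{u_0}_{L^2}$, since that integral diverges at $t=0$; it comes instead from the endpoint smoothing error estimate of Luskin--Rannacher for the homogeneous equation, of the form $\norm{u(t_M)-u_{kh}(t_M)}_{L^2}\lesssim (k+h^2)\,t_M^{-1}\norm{u_0}_{L^2}$, evaluated at the rescaled final time $\nu\cdot 1$. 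Neither issue invalidates the overall approach, but as written the $u_0$-contribution in \eqref{eq:errorEstimatesStateEquationLinfL2} is not actually established by your argument and would need to be repaired along these lines.
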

\begin{proof}
	The error estimate~\eqref{eq:errorEstimatesStateEquationLinfL2_stable} is shown in \cite[Section~5]{Meidner2011a}, 
	where the dependence on $\nu$ can be traced from the proof.	
	For the estimate~\eqref{eq:errorEstimatesStateEquationLinfL2}, 
	consider first the case $u_0 = 0$. This is exactly~\eqref{eq:errorEstimatesStateEquationLinfL2_stable}.
	In the case $q = 0$, we combine Theorems~1 and~2 from \cite{Luskin1982} with clearly stated time dependency. Superposition of both estimates yields~\eqref{eq:errorEstimatesStateEquationLinfL2}.	
\end{proof}

\newpage
\bibliographystyle{siamplain}

\end{document}